\numberwithin{equation}{section}
\newcommand{\R}{\mathbb{R}}
\newtheorem{theorem}{Theorem}[section]
\newtheorem{lemma}{Lemma}[section]
\newtheorem{remark}{Remark}[section]
\newtheorem{proposition}{Proposition}[section]
\begin{document}

\renewcommand{\thefootnote}{\arabic{footnote}}

\begin{center}
{\Large \textbf{Estimating drift parameters in a non-ergodic Gaussian Vasicek-type model }} \\[0pt]
~\\[0pt]
Khalifa Es-Sebaiy\footnote{ Department of Mathematics, Faculty of
Science, Kuwait University, Kuwait. E-mail:
 \texttt{khalifa.essebaiy@ku.edu.kw}}\
  Mohammed Es.Sebaiy\footnote{National School of Applied Sciences-Marrakech, Cadi Ayyad University, Marrakech, Moroccco. E-mail:
 \texttt{mohammedsebaiy@gmail.com}}
\\[0pt]
~\\[0pt]
 Kuwait University and Cadi Ayyad University\\[0pt]
~\\[0pt]
\end{center}

\begin{abstract}
We study a problem of parameter estimation for a non-ergodic
Gaussian Vasicek-type model defined as $dX_t=\theta(\mu+
X_t)dt+dG_t,\ t\geq0$ with  unknown parameters $\theta>0$,
$\mu\in\R$ and $\alpha:=\theta\mu$, where $G$ is a Gaussian process.
We provide least square-type estimators
$(\widetilde{\theta}_T,\widetilde{\mu}_T)$ and
$(\widetilde{\theta}_T,\widetilde{\alpha}_T)$, respectively,
  for   $(\theta,\mu)$ and $(\theta,\alpha)$ based a continuous-time observation of $\{X_t,\ t\in[0,T]\}$ as
$T\rightarrow\infty$.
 Our aim is to derive some sufficient conditions on the driving Gaussian process
$G$ in order  to ensure the strongly consistency and the joint
asymptotic distribution of
$(\widetilde{\theta}_T,\widetilde{\mu}_T)$ and
$(\widetilde{\theta}_T,\widetilde{\alpha}_T)$. Moreover, we obtain
that the limit distribution of $\widetilde{\theta}_T$ is a
Cauchy-type distribution, and $\widetilde{\mu}_T$ and
$\widetilde{\alpha}_T$ are asymptotically normal.
 We apply our result to fractional Vasicek,  subfractional Vasicek and bifractional Vasicek
 processes. This work extends the results of \cite{EEO}
studied in  the case where $\mu=0$.
\end{abstract}

\noindent {\bf Key words:} Gaussian Vasicek-type model; Parameter
estimation; Strong consistency;  Joint asymptotic  distribution;
Fractional Gaussian processes; Young integral.

\noindent {\bf 2010 AMS Classification Numbers:}  60G15; 60G22;
62F12; 62M09; 62M86.

\section{Introduction}
Let   $G:=\{G_t,t\geq0\}$ be a centered   Gaussian process
satisfying the following assumption
\begin{itemize}
\item[$(\mathcal{A}_1)$]There exist   constants  $c>0$ and $ \gamma\in(0,1)$ such
that
 for every $s,t\geq0$, \[G_0=0, \qquad E\left[\left(G_t-G_s\right)^2\right]\leq
 c\,\left|t-s\right|^{2\gamma}.\]
\end{itemize}Note that, if  $(\mathcal{A}_1)$ holds, then by  the
Kolmogorov-Centsov theorem, we can conclude that for all $\varepsilon\in\left(0,\gamma\right)$, the process $G$
 admits a modification with $(\gamma-\varepsilon)-$H\"older
continuous paths, still denoted $G$ in the sequel.

In the present paper, our goal is to estimate jointly the drift
parameters of the Gaussian Vasicek-type (also called mean-reverting
Ornstein-Uhlenbeck) process
 $X:=\{X_t,t\geq0\}$ that is
  defined as the unique (pathwise) solution to
\begin{equation}
  \label{GV}
  X_0=0,\qquad dX_t=\theta\left(\mu+X_t\right)dt+dG_t,\quad
t\geq0,
\end{equation}
  where $\theta>0$  and  $\mu\in\R$ are considered as
  unknown parameters. When $G$ is a standard Brownian
motion, the model (\ref{GV}) with $\mu = 0$ was originally proposed
by Ornstein and Uhlenbeck and then it was generalized by Vasicek,
see \cite{vasicek}.

In recent years, several researchers have been interested in
studying statistical estimation problems for Gaussian
Ornstein-Uhlenbeck processes. Estimation of the drift parameters  in
fractional-noise-driven Ornstein-Uhlenbeck processes  is a problem
that is both well-motivated by practical needs and theoretically
challenging. In the finance context, our practical motivation to
study this estimation problem is to provide tools to understand
volatility modeling in finance. Indeed, any mean-reverting model in
discrete or continuous time can be taken as a model for stochastic
volatility. Let us mention some  important results in this field
where  the volatility exhibits long-memory, which means that the
volatility today is correlated to past volatility values with a
dependence that decays very slowly. Following the approach of
\cite{CCR,CR98}, the authors of \cite{CV12a,CV12b} considered the
problem of option pricing under a stochastic volatility model that
exhibits long-range dependence. More precisely they assumed that the
dynamics of the volatility are described by the equation (\ref{GV}),
where the driving process $G$ is a standard fractional Brownian
motion (fBm) with Hurst parameter $H$ is greater than $1/2$. On the
other hand, the paper \cite{GJR} on rough volatility contends that
the short-time behavior indicates that the Hurst parameter $H$ in
the volatility is less than $1/2$.

 An example of interesting problem related to (\ref{GV})  is the
statistical estimation of $\mu$ and $\theta$ when one observes the
whole trajectory of $X$. In order to estimate the unknown parameters
 $\theta$ and $\mu $ when the whole trajectory of $X$ defined in
(\ref{GV}) is observed, we will first consider the classical least
squares estimators (LSEs) $\widehat{\theta}_T$ and
$\widehat{\alpha}_T$ for $\theta$ and $\alpha:=\mu\theta $,
respectively. By   minimizing (formally) the function
\[F(\theta,\alpha) = \int_0^T\left|\dot{{X}}_s-\left(\alpha+\theta X_s\right)\right|^2ds,\]
we obtain
\begin{equation}\label{est1-theta}
\widehat{\theta}_T=\frac{T\int_0^T X_s d X_s-X_T\int_0^T X_s d
s}{T\int_0^T X_s^2ds-\left(\int_0^T X_s d s\right)^2}
\end{equation}
and
\begin{equation}\label{est1-alpha}
\widehat{\alpha}_T=\frac{X_T\int_0^T X_s^2 d s-\int_0^T X_s d
X_s\int_0^T X_sds}{T\int_0^T X_s^2ds-\left(\int_0^T X_s d
s\right)^2}.
\end{equation}
More precisely, $(\widehat{\theta}_T,\widehat{\alpha}_T)$ is the
solution of the system $\frac{\partial
F}{\partial\theta}(\theta,\alpha)=0$, $\frac{\partial
F}{\partial\alpha}(\theta,\alpha)=0$. Moreover, the expressions
given in (\ref{est1-theta}) and (\ref{est1-alpha}) are well-defined
for $\frac12<\gamma<1$, since the stochastic integral $\int_0^T X_s
d X_s$ is well-defined in the Young sense (see Appendix) for
$\frac12<\gamma<1$ only, by using $(\mathcal{A}_1)$, (\ref{GV-zeta})
and Remark \ref{remark-X}. Further, thanks to (\ref{IBP}), we can
write $\int_0^T X_s d X_s=\frac12 X_T^2$ for $\frac12<\gamma<1$.
Thus, we can extend the estimators $\widehat{\theta}_T$ and
$\widehat{\alpha}_T$ to all $0<\gamma<1$, as follows
\begin{equation}\label{est3-theta}
\widetilde{\theta}_T=\frac{\frac12 TX_T^2 -X_T\int_0^T X_s d
s}{T\int_0^T X_s^2ds-\left(\int_0^T X_s d s\right)^2}
\end{equation}
and
\begin{equation}\label{est3-alpha}
\widetilde{\alpha}_T=\frac{X_T\int_0^T X_s^2 d s-\frac12
X_T^2\int_0^T X_sds}{T\int_0^T X_s^2ds-\left(\int_0^T X_s d
s\right)^2}.
\end{equation}
Furthermore, we can  obtain a least squares-type estimator
$\widetilde{\mu}_T$ for $\mu$, that is the statistic
\begin{equation}\label{est3-mu}
\widetilde{\mu}_T=\widetilde{\alpha}_T/\widetilde{\theta}_T=\frac{\int_0^T
X_s^2 d s-\frac12 X_T\int_0^T X_sds}{\frac12 TX_T -\int_0^T X_s d
s}.
\end{equation}
Now, notice that the estimators (\ref{est3-theta}),
(\ref{est3-alpha}) and (\ref{est3-mu}) are well-defined for all
$\gamma\in(0,1)$, and not only for $\gamma\in(\frac12,1)$. This then
allows us to consider $(\widetilde{\theta}_T,\widetilde{\mu}_T)$ as
estimator to estimate the parameters $(\theta,\mu)$  of the equation
(\ref{GV}), and $(\widetilde{\theta}_T,\widetilde{\alpha}_T)$ as
estimator to estimate the parameters $(\theta,\alpha)$  of the
process (\ref{GV}) in the form
\begin{equation}X_0=0,\quad dX_t=\left(\alpha+\theta X_t\right)dt+dG_t,\quad
t\geq0,\label{GV-alpha}
\end{equation}
for all $\gamma\in(0,1)$.\\

We apply our approach to some Vasicek Gaussian processes as follows:

  \noindent \emph{\underline{Fractional  Vasicek process}}:\\
Suppose that the process $G$  given in (\ref{GV})  is a fractional
Brownian motion with Hurst parameter $H\in(0,1)$. When
$H\in(\frac12,1)$, the parameter estimation for  $\theta$ and $\mu$
has been studied in \cite{XY} by using the LSEs $\widehat{\theta}_T$
and $\widehat{\mu}_T$ which coincide, respectively, with
$\widetilde{\theta}_T$ and $\widetilde{\mu}_T$ for
$H\in(\frac12,1)$. Here we present a study valid for all
$H\in(0,1)$. Moreover, we study the joint asymptotic distribution of
$(\widetilde{\theta}_T,\widetilde{\mu}_T)$
(see Section \ref{section-fBmV}).\\

 \noindent \emph{\underline{Subfractional  Vasicek process}}:\\
Assume that the process $G$  given in (\ref{GV})  is a subfractional
Brownian motion $S^H$ with  parameter $H\in(0,1)$, that is, $S^H$ is
a centered Gaussian process with covariance function
\[E\left(S^H_tS^H_s\right)=t^{2H}+s^{2H}-\frac{1}{2}\left((t+s)^{2H}+|t-s|^{2H}\right);\quad s,\ t\geq0.\] For
$H>\frac12$, using the LSEs $\widehat{\theta}_T$ and
$\widehat{\mu}_T$ which also coincide, respectively, with
$\widetilde{\theta}_T$ and $\widetilde{\mu}_T$, the statistical
estimation  for  $\theta$ and $\mu$ has been discussed in
\cite{XXY}. But the proof of the asymptotic distribution (3.32) of
$\widehat{\mu}_T$ in \cite{XXY} relies on a possible  flawed
technique because as $T\rightarrow\infty$, the value of the limit
given in (3.32) depends on $T$, which gives a contradiction. Here,
we give a solution for this problem, and we extend the results of
\cite{XXY} to all $H\in(0,1)$, and  we also study the joint
asymptotic distribution of
$(\widetilde{\theta}_T,\widetilde{\mu}_T)$ (see Section \ref{section-subfBmV}).\\

\noindent \emph{\underline{Bifractional  Vasicek process}}:\\
 To the best of our knowledge there is no study of the  problem of estimating the drift of (\ref{GV}) in the
case when $G$ is a bifractional Brownian motion  $B^{H,K}$ with
parameters $(H,K)\in (0,1)^2$, that is,   $B^{H,K}$ is a centered
Gaussian process with the covariance function
\begin{eqnarray*}
E(B^{H,K}_sB^{H,K}_t)=\frac{1}{2^K}\left(\left(t^{2H}+s^{2H}\right)^K-|t-s|^{2HK}\right);
\quad s,t\geq0.
\end{eqnarray*}Section \ref{section-bifBmV} is devoted to  this question.\\

More recently, the paper \cite{AAE} considered   the least
square-type estimators  (\ref{est3-theta}) and (\ref{est3-mu}) as
estimators of the drift parameters $\theta$ and $\mu$ for the
so-called mean-reverting Ornstein-Uhlenbeck process of the second
kind (also called Vasicek model of the second kind) $\{X_t,t\geq0\}$
defined as $dX_t=\theta(\mu+ X_t)dt+dY_{t,G}^{(1)},\ t\geq0$,  where
$Y_{t,G}^{(1)}:=\int_0^t e^{-s}dG_{a_s}$  with
$a_t=He^{\frac{t}{H}}$,
 and $\{G_{t},t\geq0\}$ is a Gaussian process.

Let us also mention that  similar drift statistical problems for
other Vasicek models were recently studied.  Let us describe what is
known about these parameter estimation problems.
 Let  $B^H:= \left\{B_t^H, t\geq 0\right\}$ denote a fractional Brownian motion (fBm) with Hurst
parameter $H\in(0,1)$. Consider the following fractional
Vasicek-type model driven by $B^H$
\begin{eqnarray}
  dX_{t} &=& \theta\left(\mu+X_t\right)dt+dB_{t}^H,\ \ X_{0}=0,\label{FV-model}
\end{eqnarray}
where $\theta,\  \mu\in \R$ are unknown parameters. Notice that the
process
 (\ref{FV-model}) is   {\it
ergodic} if $\theta <0,\ \mu=0$ and $X_0=\int_{-\infty}^0e^{-\theta
s}dB_{s}^H$. Otherwise,  the process  (\ref{FV-model}) is {\it
non-ergodic} if $\theta>0$.
\\
Now we   recall  several approaches to estimate the parameters of
(\ref{FV-model}). For the maximum likelihood estimation  approach,
in general the techniques used to construct maximum likelihood
estimators (MLEs) for the drift parameters of (\ref{FV-model})
 are based on Girsanov transforms for fBm and depend on  the properties of the deterministic fractional
operators (determined by the Hurst parameter) related to the fBm. In
general, the MLE is not easily computable.   In particular, it
relies on being able to   constitute a discretization of an MLE. For
a more recent comprehensive discussion via this method, we refer to
\cite{KL,TXY}.

  A least squares approach has been also considered by several researchers to study statistical estimation problems for
(\ref{FV-model}). Let us mention some works in this direction: in
the case when $\theta<0$, the statistical estimation   for the
parameters $\mu,\ \theta$ based on continuous-time  observations of
$\{X_t,\ t\in[0,T]\}$ as $T\rightarrow\infty$, has been studied by
several papers, for instance  \cite{DFW,BEV,NT,XY} and the
references therein. When $\mu=0$ in (\ref{FV-model}), the estimation
of $\theta$ has been investigated by using least squares method as
follows: the case of ergodic-type fractional Ornstein-Uhlenbeck
processes, corresponding to $\theta<0$, has been considered in
\cite{HN,EEV,HNZ}, and the case non-ergodic fractional
Ornstein-Uhlenbeck processes has been studied in \cite{EEO,EAA}.

Finally, using a  method of moments, the work \cite{EV} used
Malliavin-calculus advances (see
 \cite{NP-book}) to provide new techniques to statistical inference for stochastic
differential equations related to stationary Gaussian processes, and
its  result has been  used  to study drift parameter estimation
problems for some stochastic differential equations driven by
fractional Brownian motion with fixed-time-step observations, in
particular for the fractional Ornstein-Uhlenbeck
 given in (\ref{FV-model}), where $\mu=0$ and $\theta<0$. Similarly, in
\cite{DEV} the authors studied an  estimator problem for the
parameter $\theta$ in (\ref{FV-model}), where the fractional
Brownian motion is replaced with a general Gaussian
 process.

The rest of the paper is structured as follows. In Section
\ref{sect-auxil} we   analyze some pathwise properties  of the
Vasicek process (\ref{GV}). In Section \ref{sect-asymp} we derive
some sufficient conditions on the driving Gaussian process $G$,
which guarantees the strong consistency and the joint asymptotic
distribution of $(\widetilde{\theta}_T,\widetilde{\mu}_T)$ and
$(\widetilde{\theta}_T,\widetilde{\alpha}_T)$. Section
\ref{sect-appli} is devoted to apply our approach to fractional
Vasicek, subfractional Vasicek and bifractional Vasicek processes.

\section{Notations and auxiliary results}\label{sect-auxil} In this section we   study pathwise properties of
the non-ergodic  Vasicek-type model (\ref{GV}). These properties
will be needed  in order to analyze the asymptotic behavior of the
LSEs $(\widetilde{\theta}_T,\widetilde{\mu}_T)$ and
$(\widetilde{\theta}_T,\widetilde{\alpha}_T)$.
\\
  Because (\ref{GV}) is
linear, it is immediate to solve it explicitly; one then gets the
following formula
\begin{equation}\label{explicit-GV}
X_t=\mu\left(e^{\theta t}-1\right)+e^{\theta t}\int_0^te^{-\theta
s}dG_s,\quad t\geq 0,
\end{equation}
where the integral with respect to $G$ is understood in Young sense (see Appendix).\\
Let us introduce the following processes, for every $t\geq0$,
\begin{equation}\label{zeta}
\zeta_t:=\int_0^te^{-\theta s}dG_s,\quad Z_t:=\int_0^te^{-\theta s}
G_sds,\quad \Sigma_t:=\int_0^tX_sds.
\end{equation}
Thus, using (\ref{explicit-GV}), we can write
\begin{equation}\label{GV-zeta}
X_t= \mu \left(e^{\theta t}-1\right)+e^{\theta t}\zeta_t.
\end{equation}
Furthermore, by (\ref{GV}),
\begin{eqnarray}
X_t=\mu\theta t+\theta \Sigma_t+ G_t.\label{GV-Sigma}
\end{eqnarray}
Moreover, applying  the formula (\ref{IBP}), we have
\begin{eqnarray}\label{zeta-decomposition}
\zeta_t=e^{-\theta t}G_t+ \theta \int_0^te^{-\theta
s}G_sds=e^{-\theta t}G_t+\theta Z_t.
\end{eqnarray}

Here we will discuss some pathwise properties of $\zeta$  and $X$.
\begin{lemma}[\cite{EEO}]\label{lemma-zeta}
Assume that $(\mathcal{A}_1)$ holds with  $\gamma\in(0,1)$. Let
$\zeta $ be given by (\ref{zeta}). Then for all $\varepsilon
\in(0,\gamma)$ the process $\zeta$ admits a modification with
$(\gamma-\varepsilon)$-H\"{o}lder continuous paths, still denoted
$\zeta$ in the sequel.
\\
Moreover,
\begin{eqnarray}Z_T \longrightarrow Z_{\infty}
:= \int_0^{\infty}e^{-\theta s}G_sds,\qquad \zeta_T \longrightarrow
\zeta_\infty :=\theta Z_{\infty}\label{zeta-cv}
\end{eqnarray} almost surely and in $L^2(\Omega)$ as
$T\rightarrow\infty$.
\end{lemma}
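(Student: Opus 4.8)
The statement has two parts: a Hölder-regularity claim for $\zeta$, and the almost-sure and $L^2$ convergence of $Z_T$ and $\zeta_T$. Since the lemma is attributed to \cite{EEO}, I would reproduce the natural argument. For the regularity part, I would start from the decomposition $\zeta_t = e^{-\theta t}G_t + \theta Z_t$ in (\ref{zeta-decomposition}). The term $Z_t = \int_0^t e^{-\theta s}G_s\,ds$ has an integrand that is (a.s.) continuous by the Kolmogorov--Centsov modification of $G$ coming from $(\mathcal{A}_1)$, so $t\mapsto Z_t$ is actually $C^1$ in $t$, hence locally Lipschitz; and $t\mapsto e^{-\theta t}G_t$ inherits $(\gamma-\varepsilon)$-Hölder continuity on compacts from $G$ (product of a smooth function and a Hölder function, both locally bounded). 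So $\zeta$ is $(\gamma-\varepsilon)$-Hölder on each compact interval, giving the stated modification.

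For the convergence part, the core estimate is on the tail of $Z$. Using $(\mathcal{A}_1)$ with $G_0=0$ we get $\E[G_s^2]\le c\,s^{2\gamma}$, so by Cauchy--Schwarz $\E|G_s|\le \sqrt{c}\,s^{\gamma}$, and therefore
\[
\E\int_0^\infty e^{-\theta s}|G_s|\,ds \le \sqrt{c}\int_0^\infty e^{-\theta s} s^{\gamma}\,ds < \infty,
\]
since $\theta>0$. Hence $\int_0^\infty e^{-\theta s}G_s\,ds$ converges absolutely a.s.\ (Fubini/Tonelli: the integrand is a.s.\ integrable), which gives $Z_T\to Z_\infty$ a.s.\ as $T\to\infty$. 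For the $L^2$ convergence I would bound $\E\big[(Z_\infty-Z_T)^2\big]$: writing $Z_\infty-Z_T=\int_T^\infty e^{-\theta s}G_s\,ds$ and applying Minkowski's integral inequality,
\[
\left\|\int_T^\infty e^{-\theta s}G_s\,ds\right\|_{L^2(\Omega)} \le \int_T^\infty e^{-\theta s}\|G_s\|_{L^2(\Omega)}\,ds \le \sqrt{c}\int_T^\infty e^{-\theta s} s^{\gamma}\,ds \xrightarrow[T\to\infty]{} 0,
\]
which is the desired $L^2$ statement for $Z$. Then for $\zeta$ I would use (\ref{zeta-decomposition}): $\zeta_T = e^{-\theta T}G_T + \theta Z_T$. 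The first term tends to $0$ both a.s.\ and in $L^2$, because $e^{-\theta T}\|G_T\|_{L^2}\le \sqrt{c}\,e^{-\theta T}T^\gamma\to0$ and, for the a.s.\ statement, $e^{-\theta T}|G_T|\le e^{-\theta T}\sup_{s\le T}|G_s|$, where the a.s.-finite growth of $\sup_{s\le T}|G_s|$ (at most polynomial, again from the Hölder modification or a Borel--Cantelli argument along integer times) is killed by the exponential. Combining, $\zeta_T\to \theta Z_\infty=:\zeta_\infty$ a.s.\ and in $L^2$.

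\textbf{Main obstacle.} The only genuinely delicate point is the almost-sure vanishing of $e^{-\theta T}G_T$: this needs a quantitative a.s.\ growth bound on $G_T$, which does not follow from $(\mathcal{A}_1)$ alone at a single time but does follow from the $(\gamma-\varepsilon)$-Hölder modification (giving $|G_T|\le |G_1| + C_\omega\, T^{\gamma-\varepsilon}$ on $[1,\infty)$ with an a.s.-finite random constant on each compact, patched together — or more cleanly, a chaining/Borel--Cantelli estimate showing $\sup_{s\in[n,n+1]}|G_s| = o(e^{\delta n})$ for every $\delta>0$). Everything else is a routine application of Fubini, Cauchy--Schwarz and Minkowski's integral inequality together with the integrability of $s\mapsto e^{-\theta s}s^\gamma$ on $[0,\infty)$.
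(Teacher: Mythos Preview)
The paper does not supply a proof of this lemma: it is quoted verbatim from \cite{EEO} and is followed immediately by a remark, with no \texttt{proof} environment. So there is no in-paper argument to compare against.

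Your reconstruction is correct and is the natural one. The decomposition $\zeta_t=e^{-\theta t}G_t+\theta Z_t$ reduces the H\"older regularity to that of $G$ (since $Z$ is $C^1$), and the Fubini/Minkowski estimate using $\E|G_s|\le\sqrt{c}\,s^{\gamma}$ handles the a.s.\ and $L^2$ convergence of $Z_T$. Your identification of the only nontrivial step---the almost-sure vanishing of $e^{-\theta T}G_T$---is accurate; in fact the present paper proves the much stronger statement $G_T/T^{\delta}\to 0$ a.s.\ for every $\delta\in(\gamma,1]$ in the very next lemma (equation~(\ref{cv-G})), via precisely the Borel--Cantelli plus Garsia--Rodemich--Rumsey scheme you sketch, and that proof is logically independent of Lemma~\ref{lemma-zeta}. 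One cosmetic point: the global bound $|G_T|\le |G_1|+C_\omega T^{\gamma-\varepsilon}$ you mention does not follow from H\"older continuity on compacts alone (the H\"older constant may depend on the interval), so the Borel--Cantelli route is the clean way to justify that step.
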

\begin{remark}\label{remark-X} Thanks to Lemma \ref{lemma-zeta}, (\ref{GV-zeta})  and (\ref{zeta}) we
deduce that the processes $X$ and $\Sigma$ have H\"{o}der continuous
paths of orders $(\gamma-\varepsilon)$, and $(1-\varepsilon)$ for
any $\varepsilon>0$, respectively.
\end{remark}

We will make use of the following two technical lemmas.
\begin{lemma}Suppose that $(\mathcal{A}_1)$ holds. Then  for any $\gamma<\delta\leq 1$, we have almost
surely, as $T\rightarrow\infty$,
\begin{eqnarray}&&\frac{G_T}{T^{\delta}} \longrightarrow0,\label{cv-G}\\
&&\frac{e^{-\theta T}}{T}\int_0^T|G_tX_t|dt \longrightarrow0.\label{cv-quad-G}\end{eqnarray}
\end{lemma}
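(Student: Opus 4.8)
The plan is to establish \eqref{cv-G} first, then derive \eqref{cv-quad-G} from it together with Lemma~\ref{lemma-zeta}. For \eqref{cv-G}, since $G$ is a centered Gaussian process, the increment bound in $(\mathcal{A}_1)$ controls all moments: by hypercontractivity (equivalence of $L^p$ norms on a fixed Wiener chaos), $E[|G_t-G_s|^{2p}]\leq c_p\,|t-s|^{2p\gamma}$ for every $p\geq1$. I would then invoke the Garsia--Rodemich--Rumsey inequality (or a Kolmogorov-type continuity argument applied on dyadic blocks $[n,n+1]$) to get, for $p$ large enough that $2p\gamma>1$, a random variable $\eta\in L^{2p}(\Omega)$ with $|G_t-G_s|\leq \eta\,|t-s|^{\gamma-\varepsilon}$ whenever $s,t$ lie in a common unit interval, and then sum over the unit blocks from $0$ to $\lfloor T\rfloor$ to obtain $|G_T|\leq C\,\big(\sum_{k=0}^{\lfloor T\rfloor}\eta_k\big)$ with i.i.d.-like (at least: uniformly $L^{2p}$-bounded) blockwise constants $\eta_k$. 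A Borel--Cantelli argument using $E[|G_T|^{2p}]\leq C\,T^{2p\gamma}$ (which follows directly from $(\mathcal{A}_1)$ and Gaussianity, without even needing the block decomposition) along the integer times, combined with the Hölder modulus of $G$ on $[\lfloor T\rfloor,\lfloor T\rfloor+1]$ to fill the gaps, gives $G_T/T^{\delta}\to 0$ a.s.\ for any $\delta>\gamma$: indeed $P(|G_n|>n^{\delta})\leq C\,n^{2p\gamma-2p\delta}$ is summable once $p$ is chosen with $2p(\delta-\gamma)>1$.

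For \eqref{cv-quad-G}, I would use the explicit representation \eqref{GV-zeta}, namely $X_t=\mu(e^{\theta t}-1)+e^{\theta t}\zeta_t$, to write
\[
\frac{e^{-\theta T}}{T}\int_0^T|G_tX_t|\,dt\leq \frac{|\mu|\,e^{-\theta T}}{T}\int_0^T|G_t|\,dt+\frac{e^{-\theta T}}{T}\int_0^T e^{\theta t}|G_t|\,|\zeta_t|\,dt.
\]
The first term is handled by \eqref{cv-G}: since $|G_t|\leq o(t^{\delta})\leq o(T^{\delta})$ on $[0,T]$ for any $\gamma<\delta<1$, that term is $O\!\big(e^{-\theta T}T^{\delta}\big)\to 0$ a.s. For the second term, by Lemma~\ref{lemma-zeta} the path $t\mapsto\zeta_t$ converges a.s.\ and hence $\sup_{t\geq0}|\zeta_t|=:\zeta^*<\infty$ a.s.; so it is bounded by $\frac{\zeta^*\,e^{-\theta T}}{T}\int_0^T e^{\theta t}|G_t|\,dt$. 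Again bounding $|G_t|\leq C(\omega)(1+t^{\delta})$ for all $t$ (a consequence of \eqref{cv-G}, or directly of the Hölder modulus of $G$), and using $\int_0^T e^{\theta t}t^{\delta}\,dt\sim \theta^{-1}e^{\theta T}T^{\delta}$ as $T\to\infty$, this term is $O(T^{\delta-1})\to 0$ a.s.

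The main obstacle is making \eqref{cv-G} fully rigorous without hand-waving: one must upgrade the second-moment bound in $(\mathcal{A}_1)$ to a \emph{pathwise} growth estimate valid simultaneously for all $t\in[0,T]$, not just at a fixed $t$. The clean route is the two-step argument above—Borel--Cantelli at integer times for the order of magnitude at the "milestones," plus the global $(\gamma-\varepsilon)$-Hölder modulus of $G$ (guaranteed by $(\mathcal{A}_1)$ via Kolmogorov--Centsov, as already noted after $(\mathcal{A}_1)$ in the text) to interpolate between consecutive integers; the Hölder increment over a unit interval is bounded by a fixed random constant, which is negligible against $n^{\delta}$. Everything else is a routine application of Lemma~\ref{lemma-zeta} (boundedness of $\zeta$) and elementary estimates on $\int_0^T e^{\theta t}t^{\delta}\,dt$.
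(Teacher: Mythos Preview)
Your overall strategy for both claims matches the paper's, but the interpolation step you settle on for (\ref{cv-G}) has a real gap. You assert that ``the H\"older increment over a unit interval is bounded by a fixed random constant'' and, earlier, that there exists a single $\eta\in L^{2p}(\Omega)$ with $|G_t-G_s|\leq\eta\,|t-s|^{\gamma-\varepsilon}$ whenever $s,t$ share a unit interval. This is false: Kolmogorov--Centsov (or GRR) only yields H\"older continuity on compacts, and the blockwise oscillations $\eta_n:=\sup_{s,t\in[n,n+1]}|G_t-G_s|$ are not dominated by one finite random variable. Already for fractional Brownian motion the $\eta_n$ are identically distributed but $\sup_n\eta_n=+\infty$ almost surely. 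Hence ``Borel--Cantelli at integer milestones plus one fixed H\"older constant to interpolate'' does not close the argument.

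The correct route is the one you sketched first and then set aside: keep the blockwise constants $\eta_n$, use GRR on each $[n,n+1]$ together with $(\mathcal{A}_1)$ and Gaussian hypercontractivity to obtain a moment bound $E[\eta_n^q]\leq C$ \emph{uniform in $n$}, and then run Borel--Cantelli directly on the events $\bigl\{\sup_{n\leq T\leq n+1}|G_T|>\varepsilon n^{\delta}\bigr\}$, using $\sup_{n\leq T\leq n+1}|G_T|\leq |G_n|+\eta_n$ and $E|G_n|^q\leq C n^{q\gamma}$. This is exactly what the paper does. Your treatment of (\ref{cv-quad-G}) is essentially the paper's as well, up to a slip in the decomposition: since $X_t=\mu(e^{\theta t}-1)+e^{\theta t}\zeta_t$, the first term should carry the factor $(e^{\theta t}-1)$, not just $|\mu|$; once corrected, both terms are of the same form and are handled by your bounds $|G_t|\leq C(\omega)(1+t^{\delta})$, $\sup_{t\geq0}|\zeta_t|<\infty$ (Lemma~\ref{lemma-zeta}), and $\int_0^T e^{\theta t}t^{\delta}\,dt\sim\theta^{-1}e^{\theta T}T^{\delta}$ with $\gamma<\delta<1$.
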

\begin{proof}Notice that any constant appearing in our computations below is
understood as a generic constant which might change from line to
line, and depend only on the parameters $\gamma,q,m$. Let us prove
(\ref{cv-G}). Let $\gamma<\delta\leq 1$.
 By Borel-Cantelli's lemma, it is
sufficient to prove that, for any $\varepsilon > 0$,
\begin{eqnarray*}
\sum_{n\geq0} P\left(\sup_{n\leq T \leq
n+1}\left|\frac{G_T}{T^{\delta}}\right|>\varepsilon\right) <\infty.
\end{eqnarray*}
Let $q>0$ such that $q(\delta-\gamma)>1$ and $q\gamma>1$. Applying
Markov's inequality, we obtain
\begin{eqnarray*}
 P\left(\sup_{n\leq T \leq
n+1}\left|\frac{G_T}{T^{\delta}}\right|>\varepsilon\right) &\leq&
\frac{1}{\varepsilon^q}E\left[\sup_{n\leq T\leq
n+1}\left|\frac{G_T}{T^{\delta}}\right|^q\right]\\&\leq&
\frac{1}{\varepsilon^q n^{q\delta}}E\left[\sup_{n\leq T\leq
n+1}\left|G_T\right|^q\right].
\end{eqnarray*}
Further, applying Garsia-Rodemich-Rumsey Lemma (see \cite[Lemma
A.3.1]{nualart-book}) for $\psi(x)=x^{q},\ p(x)=x^{\frac{m+2}{q}}$,
with $0<m<q\gamma-1$, we get for every $n\leq s, t\leq n+1$,
\begin{eqnarray*}\left|G_t-G_s\right|^q\leq C_{q,m}\left|t-s\right|^m
\int_n^{n+1}\int_n^{n+1}\frac{\left|G_u-G_v\right|^q}{\left|u-v\right|^{m+2}}dudv.\end{eqnarray*}
This together with $(\mathcal{A}_1)$ implies
\begin{eqnarray*}E\left[\sup_{n\leq s,t\leq
n+1}\left|G_t-G_s\right|^q\right]&\leq& C_{q,m}
\int_n^{n+1}\int_n^{n+1}\frac{\left|u-v\right|^{q\gamma}}{\left|u-v\right|^{m+2}}dudv\\
&=& C_{q,m}
\int_0^1\int_0^1\left|x-y\right|^{q\gamma-m-2}dxdy\\&\leq&
C_{\gamma,q,m}.\end{eqnarray*} Hence,
\begin{eqnarray*}E\left[\sup_{n\leq T\leq
n+1}\left|G_T\right|^q\right]&\leq&C_q\left( E\left[\sup_{n\leq
T\leq n+1}\left|G_T-G_n\right|^q\right] +E\left|G_n\right|^q\right)\\
&\leq&C_q\left( E\left[\sup_{n\leq s,t\leq
n+1}\left|G_t-G_s\right|^q\right] +n^{q\gamma}\right)
\\&\leq&
C_{\gamma,q,m}\left(1+n^{q\gamma}\right)
\\&\leq&
C_{\gamma,q,m} n^{q\gamma}.\end{eqnarray*} Then,
\begin{eqnarray*}
 P\left(\sup_{n\leq T \leq
n+1}\left|\frac{G_T}{T^{\delta}}\right|>\varepsilon\right) &\leq&
\frac{C_{\gamma,q,m}}{\varepsilon^q n^{q(\delta-\gamma)}}.
\end{eqnarray*}
 As a consequence,
since $p(\delta-\gamma)>1$, the above series converges, which proves
(\ref{cv-G}).\\
Now we prove (\ref{cv-quad-G}). Let $\gamma<\delta<1$,
\begin{eqnarray*}\frac{e^{-\theta T}}{T}\int_0^T|G_tX_t|dt
&\leq& \sup_{t\geq0}\left|\frac{G_tX_t}{t^{\delta}e^{\theta
t}}\right|\frac{e^{-\theta T}}{T}\int_0^Tt^{\delta}e^{\theta
t}dt\\&\leq& \sup_{t\geq0}\left|\frac{G_tX_t}{t^{\delta}e^{\theta
t}}\right|\frac{e^{-\theta T}}{T^{1-\delta}}\int_0^T e^{\theta
t}dt\\&\leq& \sup_{t\geq0}\left|\frac{G_tX_t}{t^{\delta}e^{\theta
t}}\right|\frac{e^{-\theta T}}{\theta T^{1-\delta}}
\\&&\longrightarrow0\end{eqnarray*}
almost surely as $T\rightarrow\infty$, where we used that
$\sup_{t\geq0}\left|\frac{G_tX_t}{t^{\delta}e^{\theta
t}}\right|<\infty$ almost surely, thanks to (\ref{X-CV}) and
(\ref{cv-G}). Thus the proof of (\ref{cv-quad-G}) is done.
\end{proof}

\begin{lemma}\label{a.s.CVs2}
Assume that $(\mathcal{A}_1)$ holds with   $\gamma\in(0,1)$. Then,
almost surely, as $T\longrightarrow \infty$,
\begin{eqnarray}
&&e^{-\theta T} X_T  \longrightarrow
  \mu +\zeta_{\infty},\label{X-CV}\\
 &&e^{-\theta T}\int_0^TX_sds  \longrightarrow
\frac{1}{\theta}\left(\mu+\zeta_{\infty}\right),\label{mean-X-CV}\\
 &&\frac{e^{-\theta T}}{T}\int_0^TsX_sds  \longrightarrow
\frac{1}{\theta}\left(\mu+\zeta_{\infty}\right),\label{mean-sX-CV}\\
 &&\frac{e^{-\theta T}}{T^{\delta}}\int_0^T|X_s|ds  \longrightarrow  0 \quad \mbox{for any
}\delta>0,\label{mean-CV}\\
 &&e^{-2\theta T}\int_0^TX_s^2ds  \longrightarrow
\frac{1}{2\theta}\left(\mu+\zeta_{\infty}\right)^2,\label{quad-CV}
\end{eqnarray}
where $\zeta _{\infty}$ is defined in Lemma \ref{lemma-zeta}.
\end{lemma}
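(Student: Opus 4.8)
The plan is to reduce every assertion to the explicit representation (\ref{GV-zeta}), which I rewrite as $X_s=e^{\theta s}Y_s$ with $Y_s:=\mu(1-e^{-\theta s})+\zeta_s$, together with Lemma \ref{lemma-zeta}. Since $\zeta_T\to\zeta_\infty$ almost surely as $T\to\infty$, we get $Y_s\to\mu+\zeta_\infty$ almost surely as $s\to\infty$; and because $\zeta$ has continuous paths and a finite limit at infinity, almost every path $s\mapsto\zeta_s(\omega)$ --- hence $s\mapsto Y_s(\omega)$ --- is bounded on $[0,\infty)$. I would fix such an $\omega$ once and for all and argue pathwise from there.

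The only analytic ingredient needed is the elementary L'H\^opital (Toeplitz) fact: if $\varphi:[0,\infty)\to\R$ is continuous with $\varphi(s)\to\ell$, and $w:[0,\infty)\to(0,\infty)$ is continuous with $\int_0^Tw(s)\,ds\to\infty$, then $\frac{\int_0^Tw(s)\varphi(s)\,ds}{\int_0^Tw(s)\,ds}\to\ell$ as $T\to\infty$. I combine this with the exact primitives $\int_0^Te^{\theta s}\,ds=\theta^{-1}(e^{\theta T}-1)$, $\int_0^Tse^{\theta s}\,ds=\theta^{-1}Te^{\theta T}-\theta^{-2}(e^{\theta T}-1)$ and $\int_0^Te^{2\theta s}\,ds=(2\theta)^{-1}(e^{2\theta T}-1)$.

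With this in hand each limit falls out. For (\ref{X-CV}): dividing the representation by $e^{\theta T}$ gives $e^{-\theta T}X_T=Y_T\to\mu+\zeta_\infty$. For (\ref{mean-CV}): the bound $|X_s|\le e^{\theta s}\bigl(|\mu|+\sup_{u\ge0}|\zeta_u|\bigr)$ yields $\frac{e^{-\theta T}}{T^\delta}\int_0^T|X_s|\,ds\le \frac{C}{\theta\,T^\delta}\to0$ for every $\delta>0$. For (\ref{mean-X-CV}), (\ref{mean-sX-CV}) and (\ref{quad-CV}): write $e^{-\theta T}\int_0^TX_s\,ds$, $\frac{e^{-\theta T}}{T}\int_0^TsX_s\,ds$ and $e^{-2\theta T}\int_0^TX_s^2\,ds$ each as a ratio $\frac{\int_0^Tw\varphi}{\int_0^Tw}$, with $(w,\varphi)$ equal to $(e^{\theta s},Y_s)$, $(se^{\theta s},Y_s)$ and $(e^{2\theta s},Y_s^2)$ respectively, multiplied by the normalising factor $e^{-\theta T}\int_0^Te^{\theta s}\,ds$, $\frac{e^{-\theta T}}{T}\int_0^Tse^{\theta s}\,ds$ and $e^{-2\theta T}\int_0^Te^{2\theta s}\,ds$, which by the primitives above tend to $\theta^{-1}$, $\theta^{-1}$ and $(2\theta)^{-1}$. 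The L'H\^opital fact sends the three ratios to $\mu+\zeta_\infty$, $\mu+\zeta_\infty$ and $(\mu+\zeta_\infty)^2$, and multiplying out gives exactly the asserted limits.

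There is no deep obstacle here; the two points requiring a little care are (i) the pathwise boundedness of $\zeta$ (hence of $Y$) on all of $[0,\infty)$, which is precisely what legitimises both the L'H\^opital step and the crude bound used for (\ref{mean-CV}) for a.e.\ $\omega$, and (ii) in (\ref{mean-sX-CV}), checking that the extra factor $s$ inside the integral is exactly compensated by the $1/T$ in front, i.e.\ that $\frac{e^{-\theta T}}{T}\int_0^Tse^{\theta s}\,ds\to\theta^{-1}$ rather than a divergent quantity --- which is immediate from the second primitive above.
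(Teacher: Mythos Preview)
Your argument is correct. The key observation---that $Y_s:=e^{-\theta s}X_s=\mu(1-e^{-\theta s})+\zeta_s$ converges almost surely to $\mu+\zeta_\infty$ and is pathwise bounded on $[0,\infty)$---does follow immediately from Lemma~\ref{lemma-zeta}, and the Toeplitz averaging fact you quote is valid for locally bounded $\varphi$ (continuity being more than enough), so each of the five limits follows as you describe.

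Your route is, however, genuinely different from the paper's and in fact more economical. For (\ref{mean-X-CV}), (\ref{mean-sX-CV}) and (\ref{quad-CV}) the paper does not use the compact representation $X_s=e^{\theta s}Y_s$ and a single averaging lemma; instead it expands $X_s=\mu(e^{\theta s}-1)+G_s+\theta e^{\theta s}Z_s$ via (\ref{zeta-decomposition}), handles each summand separately with integration by parts (\ref{IBP}), and in doing so relies on the auxiliary convergence $G_T/T^{\delta}\to0$ from (\ref{cv-G}). For (\ref{quad-CV}) it further invokes \cite[Lemma~2.2]{EEO} to treat $e^{-2\theta T}\int_0^Te^{2\theta s}\zeta_s^2\,ds$. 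Your argument bypasses all of this: it needs neither (\ref{cv-G}) nor the external reference, because the Toeplitz step applied to $Y_s^2$ already gives $e^{-2\theta T}\int_0^T e^{2\theta s}Y_s^2\,ds\to(2\theta)^{-1}(\mu+\zeta_\infty)^2$ directly. The only inputs you use from the paper are (\ref{GV-zeta}) and the almost-sure convergence $\zeta_T\to\zeta_\infty$ of Lemma~\ref{lemma-zeta}. The paper's approach, on the other hand, has the minor advantage of making the role of the raw noise $G$ more explicit, which is consonant with how the subsequent Section~\ref{sect-asymp} is organised; but for the present lemma your unified treatment is both shorter and more self-contained.
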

\begin{proof} Notice first that the convergence (\ref{X-CV}) is a
direct consequence  of (\ref{GV-zeta}) and  (\ref{zeta-cv}).
\\
On the other hand, using (\ref{IBP}), (\ref{zeta-cv}) and
(\ref{cv-G}) we have almost surely as $T\rightarrow\infty$,
\begin{eqnarray}e^{-\theta T}\int_0^Te^{\theta s}Z_sds=\frac{Z_T}{\theta}-\frac{e^{-\theta T}}{\theta}\int_0^TG_sds\longrightarrow
\frac{\zeta_{\infty}}{\theta^2}.\label{mean-Z}
\end{eqnarray}
Combining (\ref{mean-Z}), (\ref{GV-zeta}) and
(\ref{zeta-decomposition}) we get almost surely as
$T\rightarrow\infty$,
\[e^{-\theta T}\int_0^TX_sds= e^{-\theta T}\int_0^T\left(\mu(e^{\theta s}-1)+G_s+\theta e^{\theta s}Z_s \right)ds \longrightarrow
\frac{1}{\theta}\left(\mu+\zeta_{\infty}\right),\] which proves
(\ref{mean-X-CV}). Similarly,   (\ref{IBP}), (\ref{zeta-cv}),
(\ref{cv-G}) and (\ref{mean-Z}) imply that  almost surely as
$T\rightarrow\infty$,
\begin{eqnarray*}e^{-\theta T}\int_0^Tse^{\theta s}Z_sds=\frac{Z_T}{\theta}-\frac{e^{-\theta T}}{\theta T}\int_0^TsG_sds
-\frac{e^{-\theta T}}{\theta T}\int_0^Te^{\theta
s}Z_sds\longrightarrow \frac{\zeta_{\infty}}{\theta^2}.
\end{eqnarray*}
Combined with (\ref{GV-zeta}) and (\ref{zeta-decomposition}),
straightforward calculation as above, leads to  (\ref{mean-sX-CV}).
\\ Using (\ref{X-CV}) we have almost surely as
$T\rightarrow\infty$,
\[\frac{e^{-\theta T}}{T^{\delta}}\int_0^T|X_s|ds\leq \sup_{t\geq0}\left|\frac{X_t}{e^{\theta
t}}\right|\frac{e^{-\theta T}}{T^{\delta}}\int_0^Te^{\theta s}ds
\longrightarrow 0,\] which implies (\ref{mean-CV}).\\
 From \cite[Lemma 2.2]{EEO} we have
\begin{equation*}\lim_{T\rightarrow\infty}e^{-2\theta
T}\int_0^T e^{2\theta s}\zeta_s^2
ds=\frac{1}{2\theta}\zeta_{\infty}^2\ \mbox{ almost surely.}
\end{equation*}
Moreover, similar arguments as in (\ref{mean-Z}) we deduce that
almost surely as $T\rightarrow\infty$,
\begin{eqnarray*}e^{-2\theta T}\int_0^Te^{2\theta s}Z_sds\longrightarrow
\frac{\zeta_{\infty}}{2\theta^2}.
\end{eqnarray*}
Combining these  these two latter convergences   with
(\ref{GV-zeta}), (\ref{zeta-cv}) and (\ref{cv-G}), we can deduce
(\ref{quad-CV}).
\end{proof}

\section{Asymptotic behavior of the LSEs}\label{sect-asymp}
We will now analyze the strong consistency and the joint asymptotic
distribution of the LSEs $(\widetilde{\theta}_T,\widetilde{\mu}_T)$
and $(\widetilde{\theta}_T,\widetilde{\alpha}_T)$.
\subsection{Strong consistency}
In this section we will prove the strong consistency of the
estimators $\widetilde{\theta}_T$,  $\widetilde{\mu}_T$ and
$\widetilde{\alpha}_T$.
\begin{theorem}
Assume that $(\mathcal{A}_1)$ holds  and let
$\widetilde{\theta}_{T}$ and $\widetilde{\mu}_{T}$  be given by
(\ref{est3-theta}) and (\ref{est3-mu}) for every $T\geq 0$. Then
\begin{eqnarray} \widetilde{\theta}_{T}\longrightarrow \theta,\label{consistency-theta}
 \end{eqnarray}
 and
 \begin{eqnarray}
 \widetilde{\mu}_{T}\longrightarrow \mu  \label{consistency-mu}
 \end{eqnarray}
almost  surely, as $T \rightarrow \infty$. As a consequence, we
deduce that the estimator $\widetilde{\alpha}_{T}$   given in
(\ref{est3-alpha}) is also strongly consistent, that is
 \begin{eqnarray*}
 \widetilde{\alpha}_{T}=\widetilde{\mu}_{T}\widetilde{\theta}_{T}\longrightarrow \alpha=\mu\theta  \label{consistency-alpha}
 \end{eqnarray*}
almost  surely, as $T \rightarrow \infty$.
\end{theorem}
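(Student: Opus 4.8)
The plan is to prove the three convergences one at a time, using only the a.s.\ asymptotics of Lemma~\ref{a.s.CVs2} together with (\ref{cv-G}), (\ref{cv-quad-G}) and the identity (\ref{GV-Sigma}), $X_t=\mu\theta t+\theta\Sigma_t+G_t$, where $\Sigma_t=\int_0^tX_s\,ds$ as in (\ref{zeta}). Throughout I write $\eta:=\mu+\zeta_\infty$; since $\zeta_\infty$ is a (non-degenerate) centered Gaussian variable, $\eta\neq0$ almost surely, and this is exactly what makes the ratios defining the estimators converge. I will also use that $\Sigma$ is $C^1$ with $\Sigma'=X$ (immediate from the continuity of $X$, cf.\ Remark~\ref{remark-X}), in particular $\int_0^TX_s\Sigma_s\,ds=\tfrac12\Sigma_T^2$.

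\noindent\emph{Consistency of $\widetilde\theta_T$.} Write (\ref{est3-theta}) as $\widetilde\theta_T=N_\theta/D_\theta$ with $N_\theta=\tfrac12TX_T^2-X_T\Sigma_T$ and $D_\theta=T\int_0^TX_s^2\,ds-\Sigma_T^2$. Multiplying numerator and denominator by $e^{-2\theta T}/T$ and invoking (\ref{X-CV}), (\ref{mean-X-CV}), (\ref{quad-CV}),
\[
\frac{e^{-2\theta T}}{T}N_\theta=\tfrac12\bigl(e^{-\theta T}X_T\bigr)^2-\tfrac1T\bigl(e^{-\theta T}X_T\bigr)\bigl(e^{-\theta T}\Sigma_T\bigr)\;\longrightarrow\;\tfrac12\eta^2,
\]
\[
\frac{e^{-2\theta T}}{T}D_\theta=e^{-2\theta T}\!\int_0^TX_s^2\,ds-\tfrac1T\bigl(e^{-\theta T}\Sigma_T\bigr)^2\;\longrightarrow\;\tfrac1{2\theta}\eta^2
\]
almost surely. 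As $\tfrac1{2\theta}\eta^2>0$ a.s., $D_\theta$ is eventually nonzero and $\widetilde\theta_T\to(\eta^2/2)/(\eta^2/(2\theta))=\theta$.

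\noindent\emph{Consistency of $\widetilde\mu_T$.} Write (\ref{est3-mu}) as $\widetilde\mu_T=N_\mu/D_\mu$ with $N_\mu=\int_0^TX_s^2\,ds-\tfrac12X_T\Sigma_T$ and $D_\mu=\tfrac12TX_T-\Sigma_T$, so $\widetilde\mu_T-\mu=(N_\mu-\mu D_\mu)/D_\mu$. The crucial step is to re-expand $\int_0^TX_s^2\,ds$: plugging $X_s=\mu\theta s+\theta\Sigma_s+G_s$ into one factor and using $\int_0^TX_s\Sigma_s\,ds=\tfrac12\Sigma_T^2$ gives $\int_0^TX_s^2\,ds=\mu\theta\int_0^TsX_s\,ds+\tfrac\theta2\Sigma_T^2+\int_0^TX_sG_s\,ds$; since moreover $\tfrac\theta2\Sigma_T^2-\tfrac12X_T\Sigma_T=\tfrac12\Sigma_T(\theta\Sigma_T-X_T)=-\tfrac12\Sigma_T(\mu\theta T+G_T)$ by (\ref{GV-Sigma}), one obtains
\[
N_\mu-\mu D_\mu=\mu\theta\!\int_0^TsX_s\,ds-\frac{\mu\theta T}{2}\Sigma_T-\frac12G_T\Sigma_T+\int_0^TX_sG_s\,ds-\frac\mu2TX_T+\mu\Sigma_T .
\]
Dividing by $Te^{\theta T}$ and passing to the limit term by term — (\ref{mean-sX-CV}) for the first, (\ref{mean-X-CV}) for the second and last, (\ref{cv-G}) with (\ref{mean-X-CV}) for the third, (\ref{cv-quad-G}) for the fourth, (\ref{X-CV}) for the fifth — the limits are $\mu\eta$, $-\tfrac{\mu\eta}{2}$, $0$, $0$, $-\tfrac{\mu\eta}{2}$, $0$, which add up to $0$. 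On the other hand $\frac{D_\mu}{Te^{\theta T}}=\tfrac12e^{-\theta T}X_T-\tfrac1Te^{-\theta T}\Sigma_T\to\tfrac\eta2\neq0$ a.s.\ by (\ref{X-CV}) and (\ref{mean-X-CV}); hence $D_\mu$ is eventually nonzero and $\widetilde\mu_T-\mu\to0$ a.s. Finally $\widetilde\alpha_T=\widetilde\mu_T\widetilde\theta_T\to\mu\theta=\alpha$ a.s.

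\noindent\emph{Main obstacle.} The one subtle point is $\widetilde\mu_T$: the crude $e^{-2\theta T}$-normalization that works for $\widetilde\theta_T$ does not help, because the dominant $e^{2\theta T}$-order terms in $N_\mu-\mu D_\mu$ cancel exactly. One must therefore identify the correct next scale $Te^{\theta T}$ and then verify that the remainder contributions — in particular $\int_0^TX_sG_s\,ds$ and $G_T\Sigma_T$ — are genuinely negligible at that scale, which is precisely what (\ref{cv-quad-G}) and (\ref{cv-G}) give. Everything else is routine bookkeeping with the limits of Lemma~\ref{a.s.CVs2}.
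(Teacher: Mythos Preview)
Your proof is correct and follows essentially the same route as the paper: the same $e^{-2\theta T}/T$ normalization for $\widetilde\theta_T$, and for $\widetilde\mu_T$ the same decomposition of the numerator via $X_s=\mu\theta s+\theta\Sigma_s+G_s$ and $\int_0^TX_s\Sigma_s\,ds=\tfrac12\Sigma_T^2$, together with the $Te^{\theta T}$ scale and the convergences (\ref{cv-G}), (\ref{cv-quad-G}) and Lemma~\ref{a.s.CVs2}. The only cosmetic difference is that the paper shows $N_\mu/(Te^{\theta T})\to\tfrac{\mu\eta}{2}$ and $D_\mu/(Te^{\theta T})\to\tfrac{\eta}{2}$ separately (splitting $N_\mu=I_T+J_T$), whereas you subtract $\mu D_\mu$ first and show the difference goes to~$0$; this adds two harmless extra terms but is logically identical.
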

\begin{proof}
Using   (\ref{est3-theta}) we get
\begin{eqnarray*}
\widetilde{\theta}_T &=&\frac{\frac12 \left(e^{-\theta
T}X_T\right)^2 -e^{-\theta T}X_T\frac{e^{-\theta T}}{T}\int_0^T X_s
d s}{e^{-2\theta T}\int_0^T X_s^2ds-\left(\frac{e^{-\theta
T}}{\sqrt{T}}\int_0^T X_s d s\right)^2}\\
& &\longrightarrow\theta, \quad   \mbox{almost  surely, as } T
\rightarrow \infty,
\end{eqnarray*}
where  the last convergence comes from the convergences
(\ref{X-CV}), (\ref{quad-CV}) and (\ref{mean-CV}). Thus the
convergence
(\ref{consistency-theta}) is obtained.\\
Let us now prove (\ref{consistency-mu}). It follows from
(\ref{est3-mu}) that $\widetilde{\mu}_T$   can be written as follows
\begin{equation*}
\widetilde{\mu}_T =\frac{e^{-\theta T}}{T}\left[\int_0^T
X_s^2ds-\frac{X_T}{2}\int_0^T
X_sds\right]\times\frac{1}{\frac12e^{-\theta T}X_T- \frac{e^{-\theta
T}}{T}\int_0^T X_s d s}.
\end{equation*}
According to the convergences (\ref{X-CV}) and (\ref{mean-CV}) we
have, almost  surely, as $T \rightarrow \infty$,
\[\frac{1}{\frac12e^{-\theta T}X_T- \frac{e^{-\theta
T}}{T}\int_0^T X_s d s}\longrightarrow
\frac{2}{\mu+\zeta_{\infty}}.\]
 Therefore, it remains to prove
\begin{equation}\frac{e^{-\theta T}}{T}\left[\int_0^T
X_s^2ds-\frac{X_T}{2}\int_0^T X_sds\right]\longrightarrow
\frac{\mu}{2}\left(\mu+\zeta_{\infty}\right)\label{numerator-mu-cv}\end{equation}
almost surely, as $T \rightarrow \infty$.\\
 Using the
formula (\ref{IBP}) and the equation (\ref{GV}), we have
\begin{eqnarray*}&&\int_0^T
X_s^2ds-\frac{X_T}{2}\int_0^T X_sds\\&=&\int_0^T X_s
d\Sigma_s-\frac{1}{2}\left(\mu\theta T+\theta
\Sigma_T+G_T\right)\Sigma_T\\
&=&\int_0^T \left(\mu\theta s+\theta \Sigma_s+G_s\right)
d\Sigma_s-\frac{\mu\theta}{2}  T\Sigma_T-\frac{\theta}{2}
\Sigma_T^2-\frac12G_T \Sigma_T\\
&=&  \mu\theta \int_0^TsX_sds+\frac{\theta}{2}
\Sigma_T^2+\int_0^TG_s d\Sigma_s-\frac{\mu\theta}{2}
T\Sigma_T-\frac{\theta}{2} \Sigma_T^2-\frac12G_T \Sigma_T
\\
&=&  \left(\mu\theta  \int_0^TsX_sds-\frac{\mu\theta }{2}
T\Sigma_T\right)+\left( \int_0^TG_s d\Sigma_s-\frac12G_T
\Sigma_T\right)
\\
&:=&I_T+J_T.
\end{eqnarray*}
Moreover, by L'H\^{o}pital's rule and (\ref{X-CV}) we have
\begin{eqnarray*}  \frac{e^{-\theta T}}{T}I_T&=&\frac{e^{-\theta T}}{T}\left(\mu\theta  \int_0^TsX_sds-\frac{\mu\theta }{2} T\Sigma_T\right)\\
&&\longrightarrow \frac{\mu }{2}\left(\mu+\zeta_{\infty}\right)
\end{eqnarray*}almost surely, as $T \rightarrow \infty$.\\
On the other hand, taking $\gamma<\delta<1$,
\begin{eqnarray*}
\frac{e^{-\theta T}}{T}|J_T|&=&\frac{e^{-\theta
T}}{T}\left|\int_0^TG_s d\Sigma_s-\frac12G_T
\Sigma_T\right|\\&=&\frac{e^{-\theta T}}{T}\left|\int_0^TG_sX_s d
s-\frac12G_T
\Sigma_T\right|\\&\leq&\frac32\left(\sup_{s\geq0}\left|\frac{G_s}{s^{\delta}}\right|\right)\frac{e^{-\theta
T}}{T^{1-\delta}}\int_0^T|X_s| ds\\
&\longrightarrow &0
\end{eqnarray*}almost surely, as $T \rightarrow \infty$, where we
used (\ref{cv-G}) and (\ref{mean-CV}).\\
 Consequently, the
convergence (\ref{numerator-mu-cv}) is proved.
  Thus the proof of the theorem is done.
 \end{proof}

\subsection{Asymptotic distribution }
In this section  the following assumptions are required:
\begin{itemize}
\item[$(\mathcal{A}_2)$] There exist      $\lambda_G>0$
and $ \eta\in(0,1)$ such that, as $T\rightarrow\infty$
\[\frac{E\left(G_T^2\right)}{T^{2\eta}} \longrightarrow
\lambda_G^2.\]
\item[$(\mathcal{A}_3)$] The limiting variance of $e^{-\theta T}\int_0^Te^{\theta s}dG_s$  exists as
$T\rightarrow\infty$ i.e., there exists a constant $\sigma_G>0$ such
that
\[\lim_{T\rightarrow\infty}E\left[\left(e^{-\theta T}\int_0^Te^{\theta s}dG_s\right)^2\right]\longrightarrow\sigma_G^2.\]
\item[$(\mathcal{A}_4)$] For all fixed $s\geq0$,
\begin{eqnarray*} \lim_{T\rightarrow\infty}E\left(G_se^{-\theta T}\int_0^Te^{\theta
r}dG_r\right)=0.
\end{eqnarray*}
\item[$(\mathcal{A}_5)$] For all fixed $s\geq0$,
\begin{eqnarray*}\lim_{T\rightarrow\infty}\frac{E\left(G_sG_T\right)}{T^{\eta}} =0,\quad \lim_{T\rightarrow\infty}E\left(\frac{G_T}{T^{\eta}}
e^{-\theta T}\int_0^Te^{\theta r}dG_r\right)=0.
\end{eqnarray*}
\end{itemize}

In order to   investigate  the asymptotic behavior in distribution
of the estimators $(\widetilde{\theta}_T,\widetilde{\mu}_T)$ and
$(\widetilde{\theta}_T,\widetilde{\alpha}_T)$, as
$T\rightarrow\infty$, we will need the following lemmas.
\begin{lemma}Assume that $(\mathcal{A}_1)$ holds and let $X$ be the process
given by (\ref{GV}). Then we have for every $T>0$,
\begin{eqnarray}
\frac12  X_T^2 -\frac{X_T}{T}\int_0^T X_t d t&=&\theta\left(
\int_0^T X_t^2dt-\frac{1}{T}\left(\int_0^T X_t d
t\right)^2\right)\nonumber\\
&&+\left(\mu +\theta   Z_T \right) \int_0^T e^{\theta t}
dG_t+R_T,\label{numerator-theta}
\end{eqnarray} where $Z_T$ is given in (\ref{zeta}), and the process $R_T$ is defined by
\begin{eqnarray*}R_T&:=&\frac12\left(\mu\theta T\right)^2+\frac12  G_T^2- \mu G_T
-\frac{(\mu\theta)^2T^2}{2}
-\frac{G_T}{T}\int_0^TX_tdt-\theta \int_0^T G_t^2 dt\\
&&+\theta^2 \int_0^T e^{-\theta t} G_t\int_0^t e^{\theta s} G_sdsdt.
\end{eqnarray*}
Moreover, as $T\longrightarrow\infty$,
\begin{eqnarray}e^{-\theta
T}R_T\longrightarrow0\label{cv-R}
\end{eqnarray}
almost surely.
\end{lemma}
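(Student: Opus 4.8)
The plan is to derive the algebraic identity \eqref{numerator-theta} directly from the defining equation \eqref{GV} and the integration-by-parts formula \eqref{IBP}, and then establish the convergence \eqref{cv-R} term by term using the auxiliary convergences already collected in the previous two lemmas. The identity is purely formal: starting from $\frac12 X_T^2 = \int_0^T X_t\,dX_t$ (valid here because \eqref{GV} makes $t\mapsto X_t$ differentiable with $dX_t = \theta(\mu+X_t)\,dt + dG_t$), I would substitute $dX_t = \theta(\mu + X_t)\,dt + dG_t$ and also use $X_T = \mu\theta T + \theta\Sigma_T + G_T$ from \eqref{GV-Sigma}. The left-hand side $\tfrac12 X_T^2 - \tfrac{X_T}{T}\int_0^T X_t\,dt$ then expands; the $\theta\bigl(\int_0^T X_t^2\,dt - \tfrac1T(\int_0^T X_t\,dt)^2\bigr)$ piece should fall out as the ``main'' quadratic contribution, the term $(\mu + \theta Z_T)\int_0^T e^{\theta t}\,dG_t$ should appear after rewriting $\int_0^T X_t\,dG_t$ using \eqref{GV-zeta} (so $X_t = \mu(e^{\theta t}-1) + e^{\theta t}\zeta_t$ and $\zeta_t = e^{-\theta t}G_t + \theta Z_t$), and everything left over is bundled into $R_T$.

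First I would carry out the expansion of $\tfrac12 X_T^2$. Writing $\int_0^T X_t\,dX_t = \theta\mu\Sigma_T + \theta\int_0^T X_t^2\,dt + \int_0^T X_t\,dG_t$, and then rewriting the last Young integral via \eqref{IBP} as $\int_0^T X_t\,dG_t = X_T G_T - \int_0^T G_t\,dX_t = X_T G_T - \theta\mu\int_0^T G_t\,dt - \theta\int_0^T G_t X_t\,dt - \tfrac12 G_T^2$ — wait, more carefully, $\int_0^T G_t\,dG_t = \tfrac12 G_T^2$, so $\int_0^T X_t\,dG_t = X_T G_T - \int_0^T G_t\bigl(\theta\mu\,dt + \theta X_t\,dt + dG_t\bigr) = X_T G_T - \theta\mu\int_0^T G_t\,dt - \theta\int_0^T G_t X_t\,dt - \tfrac12 G_T^2$. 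To extract the stated form I would instead expand $\int_0^T X_t\,dG_t$ through $X_t = \mu(e^{\theta t}-1)+e^{\theta t}\zeta_t$: this gives $\mu\int_0^T e^{\theta t}\,dG_t - \mu G_T + \int_0^T e^{\theta t}\zeta_t\,dG_t$, and a further integration by parts on $\int_0^T e^{\theta t}\zeta_t\,dG_t$ together with $\zeta_t = e^{-\theta t}G_t + \theta Z_t$ yields the coefficient $(\mu + \theta Z_T)$ in front of $\int_0^T e^{\theta t}\,dG_t$ plus remainder terms. The remaining bookkeeping — subtracting $\tfrac{X_T}{T}\int_0^T X_t\,dt = \tfrac{X_T}{T}\Sigma_T = \tfrac{1}{T}(\mu\theta T + \theta\Sigma_T + G_T)\Sigma_T$ and matching against $\theta\bigl(\int X^2 - \tfrac1T(\int X)^2\bigr)$ — is routine substitution; collecting whatever is not accounted for gives exactly the displayed $R_T$, so this step is verification rather than discovery.

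The substantive step is \eqref{cv-R}, and I would handle $e^{-\theta T}R_T$ summand by summand. The terms $\tfrac12(\mu\theta T)^2 - \tfrac{(\mu\theta)^2 T^2}{2}$ cancel identically, so they contribute nothing. For $e^{-\theta T}(\tfrac12 G_T^2 - \mu G_T)$ one uses \eqref{cv-G}: since $G_T/T^\delta \to 0$ a.s.\ for any $\delta\in(\gamma,1)$, we get $e^{-\theta T}G_T^2 \to 0$ and $e^{-\theta T}G_T \to 0$. For $\tfrac{e^{-\theta T}}{T}G_T\int_0^T X_t\,dt$, I would bound $|G_T|\le C T^\delta$ and combine with \eqref{mean-CV} ($\tfrac{e^{-\theta T}}{T^\delta}\int_0^T|X_t|\,dt \to 0$), which kills this term. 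For the two genuinely new pieces, $-\theta\int_0^T G_t^2\,dt$ and $\theta^2\int_0^T e^{-\theta t}G_t\int_0^t e^{\theta s}G_s\,ds\,dt$, I would again use the polynomial a.s.\ bound $|G_t|\le C t^\delta$ from \eqref{cv-G}: then $e^{-\theta T}\int_0^T G_t^2\,dt \le C e^{-\theta T}\int_0^T t^{2\delta}\,dt \le C e^{-\theta T} T^{2\delta} \to 0$, and similarly the iterated integral is dominated by $C e^{-\theta T}\int_0^T e^{-\theta t} t^\delta\int_0^t e^{\theta s}s^\delta\,ds\,dt$, which after estimating $\int_0^t e^{\theta s}s^\delta\,ds \le C e^{\theta t}t^\delta$ (up to lower-order terms) becomes $C e^{-\theta T}\int_0^T t^{2\delta}\,dt \to 0$.

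The main obstacle I anticipate is purely organizational rather than conceptual: getting the iterated integrations by parts on $\int_0^T e^{\theta t}\zeta_t\,dG_t$ in exactly the right order so that the clean coefficient $(\mu + \theta Z_T)$ emerges and the leftover lands precisely as the written $R_T$, with no stray term — in particular tracking the $\theta^2\int_0^T e^{-\theta t}G_t\int_0^t e^{\theta s}G_s\,ds\,dt$ term, which is what $\zeta$'s own decomposition $\zeta_t = e^{-\theta t}G_t + \theta Z_t$ forces into the remainder. Once the identity is pinned down, \eqref{cv-R} is a short exercise in the exponential-weight estimates above, all of which are already available from the cited lemmas and \eqref{cv-G}–\eqref{mean-CV}.
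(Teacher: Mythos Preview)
Your derivation of the identity relies on $\tfrac12 X_T^2 = \int_0^T X_t\,dX_t$ and on $\int_0^T G_t\,dG_t = \tfrac12 G_T^2$, justified by the claim that ``$(\ref{GV})$ makes $t\mapsto X_t$ differentiable''. This is false: under $(\mathcal{A}_1)$ the process $X$ is only $(\gamma-\varepsilon)$-H\"older (Remark~\ref{remark-X}), and the Young integrals $\int X\,dX$, $\int X\,dG$, $\int G\,dG$ are defined only when the sum of the H\"older exponents exceeds $1$, i.e.\ only for $\gamma>\tfrac12$. Since the lemma is asserted for the full range $\gamma\in(0,1)$ --- and a central aim of the paper is precisely to cover $\gamma\le\tfrac12$ --- your argument as written does not establish \eqref{numerator-theta} in that regime.

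The paper circumvents this by never forming $\int X\,dX$ or $\int G\,dG$: it expands $\tfrac12 X_T^2$ \emph{algebraically} via \eqref{GV-Sigma}, $X_T = \mu\theta T + \theta\Sigma_T + G_T$, and then writes $\tfrac{\theta^2}{2}\Sigma_T^2 = \theta^2\int_0^T \Sigma_t\,d\Sigma_t = \theta^2\int_0^T \Sigma_t X_t\,dt$, an ordinary Lebesgue integral since $\Sigma$ is Lipschitz. Every subsequent integration by parts pairs a H\"older function with a Lipschitz or $C^1$ one, so all Young integrals that appear are well-defined for every $\gamma\in(0,1)$; the coefficient $(\mu+\theta Z_T)$ in front of $\int_0^T e^{\theta t}\,dG_t$ then emerges just as you anticipate, after substituting $X_t = \mu(e^{\theta t}-1) + G_t + \theta e^{\theta t}Z_t$ into $-\theta\int_0^T G_t X_t\,dt$. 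Your treatment of \eqref{cv-R} is correct and matches the paper's one-line appeal to \eqref{cv-G} and \eqref{mean-X-CV}; only note that \eqref{cv-G} gives $|G_t|\le Ct^\delta$ for large $t$, so globally one should use $|G_t|\le C(1+t^\delta)$, which does not affect your exponential-weight estimates.
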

\begin{proof}In order to prove (\ref{numerator-theta}) we first need  to introduce the following
processes, for every $t\geq0$,
\begin{eqnarray*}
 A_t:= \mu \left(e^{\theta
t}-1\right);\quad M_t:=\int_0^te^{\theta s}G_s ds.
\end{eqnarray*}
Thus,  by  (\ref{explicit-GV}) and (\ref{IBP}) we have
\begin{eqnarray}
X_t&=& \mu \left(e^{\theta t}-1\right)+e^{\theta
t}\int_0^te^{-\theta s}dG_s\nonumber\\
&=&A_t+G_t+\theta e^{\theta t}Z_t.\label{GV-Z}
\end{eqnarray}
On the other hand, using (\ref{GV-Sigma}), we obtain
\begin{eqnarray}\frac12  X_T^2 &=&\frac12\left(\mu\theta T\right)^2+\frac12 G_T^2
+\mu\theta T G_T+\mu\theta^2 T
\Sigma_T+\frac{\theta^2}{2}\Sigma_T^2+\theta \Sigma_T
G_T,\label{equaX1}
\end{eqnarray}
where, according to (\ref{IBP}) and (\ref{GV-Sigma}),
\begin{eqnarray}\frac{\theta^2}{2}\Sigma_T^2
&=&\theta^2\int_0^T\Sigma_td\Sigma_t\nonumber\\&=&\theta^2\int_0^T\Sigma_tX_tdt
\nonumber\\&=&\theta\int_0^T X_t^2dt-\mu\theta^2\int_0^T
td\Sigma_t-\theta\int_0^TG_tX_tdt\nonumber
\\&=&\theta\int_0^T X_t^2dt-\mu\theta^2 T\Sigma_T+ \mu\theta^2\int_0^T
\Sigma_tdt-\theta\int_0^TG_tX_tdt.\label{equaX2}
\end{eqnarray}
Further, by (\ref{GV-Z}) and (\ref{IBP}),
\begin{eqnarray}-\theta\int_0^TG_tX_tdt
&=&-\theta\int_0^TG_tA_tdt-\theta\int_0^TG_t^2
dt-\theta^2\int_0^TG_te^{\theta t}Z_tdt
\nonumber\\
&=&-\theta\int_0^TG_tA_tdt-\theta\int_0^TG_t^2
dt-\theta^2\left(Z_TM_T-\int_0^TM_tdZ_t\right)
\nonumber\\
&=&-\theta\int_0^TG_tA_tdt-\theta\int_0^TG_t^2 dt-\theta^2
Z_TM_T\nonumber\\&&+\theta^2\int_0^T e^{-\theta t} G_t\int_0^t
e^{\theta s} G_sdsdt.\label{equaX3}
\end{eqnarray}
Also, by (\ref{GV-Sigma}) and (\ref{GV-Z}),
\begin{eqnarray}
\theta \Sigma_T G_T&=&G_T\left(X_T-\mu\theta
T-G_T\right)\nonumber\\
&=&G_T\left( -\mu\theta T + A_T+\theta e^{\theta
T}Z_T\right).\label{equaX4}
\end{eqnarray}
Combining (\ref{equaX1}),  (\ref{equaX2}),  (\ref{equaX3}) and
(\ref{equaX4}), we can conclude that
\begin{eqnarray}\frac12  X_T^2 &=&\frac12\left(\mu\theta T\right)^2+\frac12 G_T^2
+\mu\theta^2 T \Sigma_T+\theta\int_0^T X_t^2dt-\mu\theta^2
T\Sigma_T+ \mu\theta^2\int_0^T
\Sigma_tdt\nonumber\\&&-\theta\int_0^TG_tA_tdt-\theta\int_0^T G_t^2
dt-\theta^2 Z_TM_T+\theta^2\int_0^T e^{-\theta t} G_t\int_0^t
e^{\theta s} G_sdsdt \nonumber\\&&+G_T\left( A_T+\theta e^{\theta
T}Z_T\right).\label{equaX5}
\end{eqnarray}
 Using (\ref{IBP}), we have
\begin{eqnarray}-\theta^2 Z_TM_T+\theta e^{\theta
T}Z_TG_T&=&-\theta Z_T\left(\theta M_T-e^{\theta
T}G_T\right) \nonumber\\
&=&-\theta Z_T\int_0^Te^{\theta t}dG_t,\label{equaX6}
\end{eqnarray}
and
\begin{eqnarray}G_TA_T-\theta\int_0^TG_tA_tdt&=&
- \mu G_T+\mu\theta\int_0^TG_tdt+ \mu e^{\theta
T}G_T-\mu\theta\int_0^TG_te^{\theta t}dt\nonumber\\&=& - \mu
G_T+\mu\theta\int_0^TG_tdt+ \mu \int_0^Te^{\theta
t}dG_t.\label{equaX7}
\end{eqnarray}
Now, combining (\ref{equaX5}),   (\ref{equaX6}) and  (\ref{equaX7}),
we obtain
\begin{eqnarray}\frac12  X_T^2 &=&\frac12\left(\mu\theta T\right)^2+\frac12 G_T^2
+\theta\int_0^T X_t^2dt+ \mu\theta^2\int_0^T
\Sigma_tdt\nonumber\\&&-\theta\int_0^TG_t^2 dt+\theta^2\int_0^T
e^{-\theta t} G_t\int_0^t e^{\theta s} G_sdsdt \nonumber\\&&-\theta
Z_T\int_0^Te^{\theta t}dG_t- \mu G_T+\mu\theta\int_0^TG_tdt+ \mu
\int_0^Te^{\theta t}dG_t.\label{equaX8}
\end{eqnarray}
On the other hand, using (\ref{GV-Sigma}),
\begin{eqnarray}
 -\frac{X_T}{T}\int_0^T X_t d t=-\mu\theta\Sigma_T-\frac{\theta}{T}\Sigma_T^2-\frac{G_T}{T}\Sigma_T.\label{equaX9}
\end{eqnarray}
  Combining  (\ref{equaX8}),  (\ref{equaX9}) and the fact that
  \[\mu\theta\int_0^T
\Sigma_tdt-\mu\theta  \Sigma_T+\mu\theta \int_0^T
G_tdt=-\frac{(\mu\theta T)^2}{2},
\]we get therefore
  (\ref{numerator-theta}).\\
  Finally, the convergence (\ref{cv-R}) is a direct consequence of (\ref{cv-G}) and
  (\ref{mean-X-CV}).
\end{proof}

\begin{lemma}\label{lemma-cv-couple-law}
   Let $F$ be any
$\sigma\{G_t,t\geq0\}-mesurable$ random variable such that
$P(F<\infty)=1$. If   $(\mathcal{A}_1)-$$(\mathcal{A}_4)$ hold, then
as $T\rightarrow\infty$
\begin{eqnarray}\left(F,e^{-\theta T}\int_0^Te^{\theta
t}dG_t\right)\overset{Law}{\longrightarrow}\left(F,\sigma_G
N_2\right).\label{cv-law-couple-F}\end{eqnarray} Moreover, if
$(\mathcal{A}_5)$ holds, then as $T\rightarrow\infty$
\begin{eqnarray}\left(\frac{G_T}{T^{\eta}},F,e^{-\theta T}\int_0^Te^{\theta
t}dG_t\right)\overset{Law}{\longrightarrow}\left(\lambda_G
N_1,F,\sigma_G N_2\right),\label{cv-law-couple-F-G}\end{eqnarray}
where $N_1\sim\mathcal{N}(0,1)$, $N_2\sim\mathcal{N}(0,1)$ and $G$
are independent.
\end{lemma}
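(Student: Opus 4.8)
The plan is to establish the joint convergence by showing that the pair $(F, e^{-\theta T}\int_0^T e^{\theta t}\,dG_t)$, and then the triple including $G_T/T^\eta$, behaves asymptotically like a deterministic (in law) coupling of $F$ with independent Gaussian limits. The key structural fact is that $F$ is measurable with respect to the whole Gaussian family $\{G_t, t\ge 0\}$, while the integrals $e^{-\theta T}\int_0^T e^{\theta t}\,dG_t$ are themselves elements of the (first) Gaussian chaos generated by $G$. So everything lives inside one Gaussian space, and the natural tool is a joint convergence criterion for Gaussian vectors together with an approximation argument that reduces $F$ to something depending only on finitely many, or on a bounded interval of, the $G_t$'s.

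First I would reduce to the case where $F$ is bounded and depends only on $\{G_t : t \in [0,s_0]\}$ for some fixed $s_0$: by a standard monotone class / density argument it suffices to prove \eqref{cv-law-couple-F} for $F$ of the form $\phi(G_{t_1},\dots,G_{t_k})$ with $\phi$ bounded continuous and $t_1,\dots,t_k \le s_0$, since such random variables are dense and the general $F$ (a.s. finite) can be truncated. Then the problem becomes: show that the finite-dimensional Gaussian vector $\big(G_{t_1},\dots,G_{t_k},\, e^{-\theta T}\int_0^T e^{\theta t}\,dG_t\big)$ converges in law to $\big(G_{t_1},\dots,G_{t_k},\, \sigma_G N_2\big)$ with $N_2$ independent of the first block. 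For a sequence of Gaussian vectors, convergence in law is equivalent to convergence of the covariance matrix (means are all zero here). The diagonal entry $E\big[(e^{-\theta T}\int_0^T e^{\theta t}\,dG_t)^2\big] \to \sigma_G^2$ is exactly $(\mathcal{A}_3)$; the cross-covariances $E\big[G_{t_j}\, e^{-\theta T}\int_0^T e^{\theta t}\,dG_t\big] \to 0$ are exactly $(\mathcal{A}_4)$; and the top-left $k\times k$ block is constant in $T$. Hence the limiting covariance is block-diagonal, which gives independence of the Gaussian limit $\sigma_G N_2$ from $(G_{t_1},\dots,G_{t_k})$, and applying $\phi$ (continuous, bounded) together with the continuous mapping theorem and uniform integrability yields \eqref{cv-law-couple-F}.

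For the second assertion \eqref{cv-law-couple-F-G} I would run the same scheme on the enlarged vector $\big(G_T/T^\eta,\, G_{t_1},\dots,G_{t_k},\, e^{-\theta T}\int_0^T e^{\theta t}\,dG_t\big)$. Again all entries are centered Gaussian, so it suffices to track the covariance matrix. The new diagonal entry $E[(G_T/T^\eta)^2] \to \lambda_G^2$ is $(\mathcal{A}_2)$; the covariances $E[G_{t_j} G_T / T^\eta] \to 0$ and $E\big[(G_T/T^\eta)\, e^{-\theta T}\int_0^T e^{\theta t}\,dG_t\big] \to 0$ are precisely the two statements in $(\mathcal{A}_5)$; the remaining block is handled as before via $(\mathcal{A}_3)$ and $(\mathcal{A}_4)$. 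The limiting covariance matrix is therefore block-diagonal with blocks corresponding to $\lambda_G N_1$, the $G$-block, and $\sigma_G N_2$, giving mutual independence of $N_1$, $\{G_t\}$ and $N_2$ in the limit. Then undo the truncation: pass back from $\phi(G_{t_1},\dots,G_{t_k})$ to general $F$ using that the approximating random variables converge to $F$ in probability and that convergence in law is stable under such approximations (a routine $\varepsilon$-argument controlling $P(|F - F_n| > \varepsilon)$).

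The main obstacle is the reduction step rather than the covariance computation: one must justify carefully that an arbitrary $\sigma\{G_t, t\ge 0\}$-measurable, a.s.\ finite $F$ can be approximated — in probability, uniformly in $T$ in the relevant sense — by bounded functionals of finitely many coordinates, and that this approximation passes through the joint-law convergence. The cleanest route is: (i) truncate $F$ to $F\wedge n \vee(-n)$, controlling the error on an event of probability $\ge 1-\delta$; (ii) approximate the bounded truncation in $L^2$ by $E[F_n \mid \sigma\{G_t : t\le s_0\}]$ and then by a continuous function of finitely many $G_{t_j}$'s, using separability of the Gaussian space; (iii) note that each approximation error is a fixed random variable not depending on $T$, so it does not interfere with letting $T\to\infty$. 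Everything else is the elementary fact that weak convergence of mean-zero Gaussian vectors is governed entirely by their covariance matrices, which turns the whole lemma into a bookkeeping exercise over the hypotheses $(\mathcal{A}_2)$–$(\mathcal{A}_5)$.
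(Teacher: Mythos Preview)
Your proposal is correct and follows essentially the same route as the paper: reduce $F$ to a bounded continuous function of finitely many $G_{s_j}$'s, then use that convergence in law of centered Gaussian vectors is equivalent to convergence of their covariance matrices, which is precisely the content of $(\mathcal{A}_2)$--$(\mathcal{A}_5)$. The paper's own proof is terser---it cites \cite{EEO} for \eqref{cv-law-couple-F} and \cite{EN} for the reduction to finite-dimensional distributions---but the underlying argument is exactly the one you spell out.
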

\begin{proof} The convergence (\ref{cv-law-couple-F}) is proved in
\cite[Lemma 2.4]{EEO}.  Now we prove  (\ref{cv-law-couple-F-G}).
Using similar arguments as in the proof of   \cite[Lemma 7]{EN} it
suffices to prove that for every positive integer $d$, and positive
constants $s_1,\ldots,s_d$,
\begin{eqnarray*}\left(\frac{G_T}{T^{\eta}},G_{s_1},\ldots,G_{s_d},e^{-\theta T}\int_0^Te^{\theta
t}dG_t\right)\overset{Law}{\longrightarrow}\left(\lambda_G
N_1,G_{s_1},\ldots,G_{s_d},\sigma_G N_2\right)\end{eqnarray*} as
 $T\rightarrow\infty$. Moreover, since the left-hand side in this latter convergence is a
Gaussian vector,   it is sufficient to establish the convergence of
its covariance matrix. Therefore, using
 $(\mathcal{A}_2)-$$(\mathcal{A}_5)$, the desired result is
 obtained.

\end{proof}

Recall that if $X\sim \mathcal{N}(m_1,\sigma_1)$ and $Y\sim
\mathcal{N}(m_2,\sigma_2)$ are two independent random variables,
then $X/Y$ follows a   Cauchy-type distribution. For a motivation
and further references, we refer the reader to \cite{PTM}, as well
as \cite{marsaglia}. Notice also that if $N\sim\mathcal{N}(0,1)$ is
independent of $G$, then $N$ is independent of $\zeta_{\infty}$,
since $\zeta_\infty =\theta \int_0^{\infty}e^{-\theta s}G_sds$ is a
functional of $G$.

\begin{theorem}
Assume that $(\mathcal{A}_1)-$$(\mathcal{A}_4)$ hold. Suppose that
$N_1\sim\mathcal{N}(0,1)$, $N_2\sim\mathcal{N}(0,1)$ and $G$ are
independent. Then as
 $T\rightarrow\infty$,
\begin{eqnarray}e^{\theta T}(\widetilde{\theta}_T-\theta)\overset{Law}{\longrightarrow}\frac{2\theta\sigma_GN_2}{\mu
+\zeta_{\infty}},\label{cv-theta-law}
\end{eqnarray}
\begin{eqnarray}T^{1-\eta}\left(\widetilde{\mu}_T-\mu
\right)\overset{Law}{\longrightarrow}
\frac{\lambda_G}{\theta}N_1,\label{cv-mu-law}
\end{eqnarray}
\begin{eqnarray}T^{1-\eta}\left(\widetilde{\alpha}_T-\alpha
\right)\overset{Law}{\longrightarrow} \lambda_G
N_1.\label{cv-alpha-law}
\end{eqnarray}
 Moreover, if  $(\mathcal{A}_5)$ holds, then as
 $T\rightarrow\infty$,\begin{eqnarray}\left(e^{\theta T}(\widetilde{\theta}_T-\theta),T^{1-\eta}\left(\widetilde{\mu}_T-\mu
\right)\right)\overset{Law}{\longrightarrow}\left(\frac{2\theta\sigma_GN_2}{\mu
+\zeta_{\infty}},\frac{\lambda_G}{\theta}N_1\right),\label{cv-joint-theta-mu-law}
\end{eqnarray}
\begin{eqnarray}\left(e^{\theta
T}(\widetilde{\theta}_T-\theta),T^{1-\eta}\left(\widetilde{\alpha}_T-\alpha
\right)\right)\overset{Law}{\longrightarrow}\left(\frac{2\theta\sigma_GN_2}{\mu
+\zeta_{\infty}},\lambda_G
N_1\right).\label{cv-joint-theta-alpha-law}
\end{eqnarray}
\end{theorem}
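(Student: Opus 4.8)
The plan is to derive the asymptotic laws for $\widetilde\theta_T$, $\widetilde\mu_T$, $\widetilde\alpha_T$ from the algebraic identity \eqref{numerator-theta} together with the a.s.\ convergences of Lemma~\ref{a.s.CVs2} and the joint-law convergences of Lemma~\ref{lemma-cv-couple-law}. First, for \eqref{cv-theta-law}: starting from the definition \eqref{est3-theta} and the identity \eqref{numerator-theta}, write
\[
e^{\theta T}(\widetilde\theta_T-\theta)
=\frac{(\mu+\theta Z_T)\,e^{-\theta T}\int_0^T e^{\theta t}dG_t+e^{-\theta T}R_T}
{e^{-2\theta T}\int_0^T X_t^2\,dt-\frac1T\left(e^{-\theta T}\int_0^T X_t\,dt\right)^2}.
\]
By Lemma~\ref{lemma-zeta}, $\theta Z_T\to\zeta_\infty$ a.s., so the numerator coefficient $\mu+\theta Z_T\to\mu+\zeta_\infty$; by \eqref{cv-R}, $e^{-\theta T}R_T\to0$ a.s.; by \eqref{quad-CV} and \eqref{mean-X-CV} (the second term has an extra $1/T$ and vanishes), the denominator $\to\frac1{2\theta}(\mu+\zeta_\infty)^2$ a.s. Applying Lemma~\ref{lemma-cv-couple-law} with $F$ any functional of $G$ — here I take $F=(\mu+\zeta_\infty)$, which is $\sigma\{G_t\}$-measurable and a.s.\ finite — gives $(\mu+\zeta_\infty,\,e^{-\theta T}\int_0^Te^{\theta t}dG_t)\overset{Law}{\to}(\mu+\zeta_\infty,\sigma_G N_2)$, and a Slutsky argument (combining a.s.\ limits with this joint convergence in law) yields $e^{\theta T}(\widetilde\theta_T-\theta)\overset{Law}{\to}\frac{(\mu+\zeta_\infty)\sigma_G N_2}{\frac1{2\theta}(\mu+\zeta_\infty)^2}=\frac{2\theta\sigma_G N_2}{\mu+\zeta_\infty}$.

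For \eqref{cv-mu-law}, I start from the representation of $\widetilde\mu_T$ used in the proof of strong consistency,
\[
\widetilde\mu_T=\frac{\dfrac{e^{-\theta T}}{T}\!\left[\int_0^T X_s^2ds-\dfrac{X_T}{2}\int_0^T X_sds\right]}{\dfrac12 e^{-\theta T}X_T-\dfrac{e^{-\theta T}}{T}\int_0^T X_s\,ds}
=\frac{I_T+J_T \text{ (scaled)}}{\ \cdot\ },
\]
and compute $T^{1-\eta}(\widetilde\mu_T-\mu)$ by subtracting $\mu$ and clearing denominators. The denominator converges a.s.\ to $\frac12(\mu+\zeta_\infty)$ by \eqref{X-CV}, \eqref{mean-CV}. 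The key point is to identify the leading $T^{-\eta}$-order fluctuation in the numerator: after the exact cancellation already isolated in the consistency proof ($I_T$ contributes the deterministic drift $\frac\mu2(\mu+\zeta_\infty)$ via L'Hôpital and \eqref{X-CV}), the surviving term of order $T^{\eta}e^{\theta T}$ must come from $J_T=\int_0^T G_s\,d\Sigma_s-\frac12 G_T\Sigma_T$. I expect that, after multiplying by $e^{-\theta T}T^{-1+\eta}\cdot T$ and using integration by parts \eqref{IBP} plus \eqref{mean-X-CV}, the dominant term is $-\tfrac12\dfrac{G_T}{T^\eta}\cdot e^{-\theta T}\Sigma_T\cdot(\text{const})$ together with a contribution of the same order from $\int_0^T G_s X_s ds$; invoking \eqref{cv-G}-type control and the convergence $e^{-\theta T}\Sigma_T\to\frac1\theta(\mu+\zeta_\infty)$ reduces everything to the behaviour of $G_T/T^\eta$. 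Then Lemma~\ref{lemma-cv-couple-law}, second part (needing $(\mathcal{A}_5)$ for the joint statement, but only $(\mathcal{A}_2)$ for the marginal together with independence of $N_1$ from $G$), gives $G_T/T^\eta\overset{Law}{\to}\lambda_G N_1$, whence $T^{1-\eta}(\widetilde\mu_T-\mu)\overset{Law}{\to}\frac{\lambda_G}{\theta}N_1$ after the algebra collapses the constants. The statement \eqref{cv-alpha-law} then follows immediately since $\widetilde\alpha_T=\widetilde\mu_T\widetilde\theta_T$, $\widetilde\theta_T\to\theta$ a.s., and $T^{1-\eta}(\widetilde\alpha_T-\alpha)=\widetilde\theta_T\,T^{1-\eta}(\widetilde\mu_T-\mu)+\mu\,T^{1-\eta}(\widetilde\theta_T-\theta)$, where the last term is negligible because $T^{1-\eta}e^{-\theta T}\to0$.

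For the joint statements \eqref{cv-joint-theta-mu-law} and \eqref{cv-joint-theta-alpha-law}, the point is that the two normalized errors are driven, to leading order, by the two \emph{different} functionals $e^{-\theta T}\int_0^T e^{\theta t}dG_t$ (scale $e^{-\theta T}$) and $G_T/T^\eta$ (scale $T^{-\eta}$), together with the common a.s.-limiting quantity $\zeta_\infty=\theta\int_0^\infty e^{-\theta s}G_s ds$. By the second part of Lemma~\ref{lemma-cv-couple-law} applied with $F=\mu+\zeta_\infty$, the triple $\bigl(G_T/T^\eta,\ \mu+\zeta_\infty,\ e^{-\theta T}\int_0^T e^{\theta t}dG_t\bigr)$ converges jointly in law to $(\lambda_G N_1,\ \mu+\zeta_\infty,\ \sigma_G N_2)$ with $N_1$, $N_2$, $G$ mutually independent; feeding this into the two Slutsky arguments above \emph{simultaneously} (the denominators and the deterministic drifts are all a.s.-continuous functionals of $\zeta_\infty$, hence jointly compatible) gives the joint convergence of $\bigl(e^{\theta T}(\widetilde\theta_T-\theta),\,T^{1-\eta}(\widetilde\mu_T-\mu)\bigr)$ to $\bigl(\frac{2\theta\sigma_G N_2}{\mu+\zeta_\infty},\frac{\lambda_G}{\theta}N_1\bigr)$, and likewise for the $\widetilde\alpha_T$ pair. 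I expect the main obstacle to be the $\widetilde\mu_T$ computation: pinning down precisely which terms inside $J_T$ (and the residual pieces of $I_T$ beyond its a.s.\ limit) are of exact order $T^\eta e^{\theta T}$ and which are lower order, so that the limit is cleanly $\frac{\lambda_G}{\theta}N_1$ with the correct constant, requires a careful integration-by-parts bookkeeping and repeated use of the growth estimates \eqref{cv-G}, \eqref{mean-CV}, \eqref{mean-X-CV}; everything else is Slutsky plus the two lemmas.
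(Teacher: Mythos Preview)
Your treatment of \eqref{cv-theta-law} matches the paper's proof exactly: write $e^{\theta T}(\widetilde\theta_T-\theta)$ as the ratio coming from \eqref{numerator-theta}, invoke the a.s.\ limits \eqref{zeta-cv}, \eqref{quad-CV}, \eqref{mean-CV}, \eqref{cv-R}, and conclude with Lemma~\ref{lemma-cv-couple-law} and Slutsky. Likewise, your scheme for the joint limits \eqref{cv-joint-theta-mu-law}--\eqref{cv-joint-theta-alpha-law} via the triple convergence \eqref{cv-law-couple-F-G} with $F=\mu+\zeta_\infty$ is the same as the paper's.

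Where you diverge is the proof of \eqref{cv-mu-law}. You propose to return to the $I_T,J_T$ decomposition from the consistency argument and extract the $T^{\eta}e^{\theta T}$-scale fluctuation directly from $J_T$ (and the residual of $I_T$). This can be made to work, but it requires nontrivial additional estimates---for instance, integrating by parts to see that $\frac{e^{-\theta T}}{T^{\eta}}\int_0^T e^{\theta s}G_s\,ds=\frac{1}{\theta}\bigl[\frac{G_T}{T^{\eta}}-\frac{e^{-\theta T}}{T^{\eta}}\int_0^T e^{\theta s}dG_s\bigr]$, controlling $\frac{e^{-\theta T}}{T^{\eta}}\int_0^T e^{\theta s}G_s(\zeta_s-\zeta_\infty)\,ds$, and verifying that the $I_T$-residual $\frac{e^{-\theta T}}{T^{\eta}}\bigl[\int_0^T sX_s\,ds-T\Sigma_T\bigr]=-\frac{e^{-\theta T}}{T^{\eta}}\int_0^T\Sigma_s\,ds$ vanishes. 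You yourself flag this bookkeeping as the main obstacle, and your sketch does not carry it out.

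The paper sidesteps all of this with a one-line algebraic identity that you are missing: from the very definitions \eqref{est3-theta}--\eqref{est3-mu} one checks
\[
\widetilde\theta_T\,\widetilde\mu_T\,T \;=\; X_T-\widetilde\theta_T\int_0^T X_t\,dt.
\]
Combining this with $X_T=\mu\theta T+\theta\Sigma_T+G_T$ from \eqref{GV-Sigma} gives immediately
\[
T^{1-\eta}(\widetilde\mu_T-\mu)
=\frac{1}{\widetilde\theta_T}\Bigl[-e^{\theta T}(\widetilde\theta_T-\theta)\,\frac{e^{-\theta T}}{T^{\eta}}\!\int_0^T X_t\,dt
-\mu\,\frac{T^{1-\eta}}{e^{\theta T}}\,e^{\theta T}(\widetilde\theta_T-\theta)
+\frac{G_T}{T^{\eta}}\Bigr],
\]
so the only surviving fluctuation is $G_T/T^{\eta}$, the other two terms being $o_P(1)$ by \eqref{mean-CV} and \eqref{cv-theta-law}. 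This identity also yields \eqref{cv-alpha-law} directly (it is really an identity for $\widetilde\alpha_T$), and feeds cleanly into the joint argument. Using it would eliminate the uncertain step in your plan.
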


\begin{proof}
First we prove (\ref{cv-theta-law}). From (\ref{est3-theta}) and
(\ref{numerator-theta}) we can write
\begin{eqnarray}
e^{\theta T}\left(\widetilde{\theta}_T-\theta\right)&=&\frac{\left(
\mu  +\theta   Z_T \right) e^{-\theta T}\int_0^T e^{\theta t} dG_t+
e^{-\theta T}R_T}{e^{-2\theta T}\left( \int_0^T
X_t^2dt-\frac{1}{T}\left(\int_0^T X_t d t\right)^2\right)}\nonumber\\
&=&\frac{ e^{-\theta T}\int_0^T e^{\theta t} dG_t}{\left( \mu
+\zeta_{\infty}\right)}\times \frac{\left( \mu
+\zeta_{\infty}\right)\left( \mu  +\theta Z_T \right)}{e^{-2\theta
T}\left( \int_0^T X_t^2dt-\frac{1}{T}\left(\int_0^T X_t d
t\right)^2\right)}\nonumber\\&&+ \frac{e^{-\theta T}R_T}{e^{-2\theta
T}\left( \int_0^T X_t^2dt-\frac{1}{T}\left(\int_0^T X_t d
t\right)^2\right)}\nonumber
\\
&=:&a_T\times b_T+c_T.\label{decomp-theta-estim}
\end{eqnarray}
Lemma \ref{lemma-cv-couple-law}  yields , as $T\rightarrow\infty$,
\[a_T\overset{ Law}{\longrightarrow}\frac{\sigma_GN_2}{ \mu  +\zeta_{\infty}},\]
 whereas (\ref{quad-CV}), (\ref{mean-CV}) and (\ref{zeta-cv}) imply that $b_T
\longrightarrow 2\theta$ almost surely as $T\rightarrow\infty$. On
the other hand, by (\ref{quad-CV}), (\ref{mean-CV}) and
(\ref{cv-R}), we obtain that $c_T \longrightarrow 0$ almost surely
as
$T\rightarrow\infty$.\\
Combining all these facts together with (\ref{cv-law-couple-F}), we
get (\ref{cv-theta-law}).
\\
Using (\ref{est3-theta}) and (\ref{est3-mu}),  a straightforward
calculation shows that $\widetilde{\theta}_T$ and
$\widetilde{\mu}_T$ verify
\begin{eqnarray}
 \widetilde{\theta}_T\widetilde{\mu}_T T&=& \widetilde{\theta}_T\frac{\widetilde{\mu}_T}{X_T} X_TT\nonumber\\
 &=& X_T-\widetilde{\theta}_T\int_0^T X_t dt.\label{decomp1}
\end{eqnarray}
Combining (\ref{decomp1}) with (\ref{GV}), we obtain
\begin{eqnarray}T^{1-\eta}\left(\widetilde{\mu}_T-\mu \right)
&=&\frac{1}{\widetilde{\theta}_T}\left[-e^{\theta
T}\left(\widetilde{\theta}_T-\theta\right)\frac{e^{-\theta
T}}{T^{\eta}}\int_0^T X_t dt-\mu\frac{T^{1-\eta}}{e^{\theta T}}
e^{\theta T}\left(\widetilde{\theta}_T-\theta\right)
+\frac{G_T}{T^{\eta}}\right]\nonumber\\
&=:&\frac{1}{\widetilde{\theta}_T}\left[D_T
+\frac{G_T}{T^{\eta}}\right].\label{decomp-mu-estim}\end{eqnarray}
 Using (\ref{mean-CV}),
(\ref{consistency-theta}),  (\ref{cv-theta-law}) and Slutsky's
theorem, we obtain $D_T\longrightarrow0$ in
probability as $T\rightarrow\infty$.  This together with $(\mathcal{A}_2)$  implies (\ref{cv-mu-law}).\\
 Further, according to (\ref{decomp1}) and (\ref{GV}) we can write
\begin{eqnarray*}T^{1-\eta}\left(\widetilde{\alpha}_T-\alpha \right)
&=&-e^{\theta
T}\left(\widetilde{\theta}_T-\theta\right)\frac{e^{-\theta
T}}{T^{\eta}}\int_0^T X_t dt +\frac{G_T}{T^{\eta}},\end{eqnarray*}
which proves (\ref{cv-alpha-law}), by using (\ref{mean-CV}),
  (\ref{cv-theta-law}), $(\mathcal{A}_2)$,
and Slutsky's theorem.
\\
Let us now do the proof of (\ref{cv-joint-theta-mu-law}). By
(\ref{decomp-theta-estim}) and (\ref{decomp-mu-estim}) we have
\begin{eqnarray*}\left(e^{\theta T}(\widetilde{\theta}_T-\theta),T^{1-\eta}\left(\widetilde{\mu}_T-\mu
\right)\right)&=&\left(a_T\times
b_T+c_T,\frac{1}{\widetilde{\theta}_T}\left[D_T
+\frac{G_T}{T^{\eta}}\right]\right)
\\&=&\left(a_T\times
b_T,\frac{1}{\widetilde{\theta}_T}\frac{G_T}{T^{\eta}}\right)+\left(c_T,\frac{1}{\widetilde{\theta}_T}D_T\right)
\\&=&\frac{2\theta^2}{\widetilde{\theta}_T}\left(a_T,\frac{G_T}{T^{\eta}}\right)+\frac{1}{\widetilde{\theta}_T}\left(a_T\times
(b_T\times\widetilde{\theta}_T-2\theta^2),0\right)
\\&&\quad+\left(c_T,\frac{1}{\widetilde{\theta}_T}D_T\right).
\end{eqnarray*}
By the above convergences and  Slutsky's theorem we  deduce
\[\frac{1}{\widetilde{\theta}_T}\left(a_T\times
(b_T\times\widetilde{\theta}_T-2\theta^2),0\right)\longrightarrow0,
\ \left(c_T,\frac{1}{\widetilde{\theta}_T}D_T\right)\longrightarrow0
\mbox{ in probability as }T\rightarrow\infty.\] Therefore, using
(\ref{cv-law-couple-F-G}), we
 obtain as $T\rightarrow\infty$,
 \begin{eqnarray*}\frac{1}{\widetilde{\theta}_T}\left(2\theta^2a_T,\frac{G_T}{T^{\eta}}\right)&=&
 \frac{1}{\widetilde{\theta}_T}\left(2\theta^2\frac{ e^{-\theta T}\int_0^T e^{\theta t} dG_t}{\left( \mu
+\zeta_{\infty}\right)},\frac{G_T}{T^{\eta}}\right)\\&& \overset{
Law}{\longrightarrow}\left(\frac{2\theta\sigma_G N_2}{ \mu
+\zeta_{\infty}},\frac{\lambda_G}{\theta} N_1\right),
 \end{eqnarray*}
 which completes the proof of (\ref{cv-joint-theta-mu-law}). Finally, following the same arguments as  in the proof of
(\ref{cv-joint-theta-mu-law}) we can deduce
(\ref{cv-joint-theta-alpha-law}).
\end{proof}
 \begin{remark}Following  \cite{EN,BEO}, the author of \cite{Yu} considered
 the above
problem  estimation for (\ref{GV}) in the case when $G$ is  a
self-similar Gaussian process with index $L\in(\frac12,1)$. The
author studied the asymptotic behavior for each component of
$(\widetilde{\theta}_T,\widetilde{\mu}_T)$ separately. In the
present work  we provide a general technique that   allows to extend
the results of \cite{Yu}   for all general  index $L\in(0,1)$.
Moreover, we study the joint asymptotic behavior of
$(\widetilde{\theta}_T,\widetilde{\mu}_T)$. \end{remark}

\section{Applications to Gaussian Vasicek
processes}\label{sect-appli}
This section is devoted to some examples of   Gaussian Vasicek
processes. We will discuss  the following three cases of the driving
Gaussian process $G$ of (\ref{GV}): fractional Brownian motion,
subfractional Brownian motion and bifractional Brownian motion. We
will need the following technical lemma.
\begin{lemma}\label{key-applications}For every $H\in(0,1)$, we have
\begin{eqnarray*}  k_H&:=&\int_0^{\infty}\int_0^{\infty} e^{-\theta t} e^{-\theta
s}t^{2H}dsdt=\frac{\Gamma(2H+1)}{\theta^{2H+2}},\label{equa-a}\\
 l_H&:=&\int_0^{\infty}\int_0^{\infty} e^{-\theta t} e^{-\theta
s}|t-s|^{2H}dsdt=\frac{\Gamma(2H+1)}{\theta^{2H+2}},\label{equa-b}\\
 m_H&:=&\int_0^{\infty}\int_0^{\infty} e^{-\theta t} e^{-\theta
s}|t+s|^{2H}dsdt=\frac{\Gamma(2H+2)}{\theta^{2H+2}}.\label{equa-d}
\end{eqnarray*}
\end{lemma}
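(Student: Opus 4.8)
The plan is to compute each of the three integrals explicitly by reducing to the elementary Gamma integral
\[
\int_0^{\infty} e^{-\theta r}\, r^{a}\, dr = \frac{\Gamma(a+1)}{\theta^{a+1}}, \qquad a > -1,
\]
which applies with $a=2H$ and $a=2H+1$ since $H\in(0,1)$. In every case the integrand is nonnegative, so Tonelli's theorem lets me pass freely between the double integral, the iterated integrals, and the linear changes of variables used below (all of Jacobian $1$). The easiest is $k_H$: its integrand is a product of a function of $t$ and a function of $s$, so the double integral factors as $\big(\int_0^{\infty}e^{-\theta s}\,ds\big)\big(\int_0^{\infty}e^{-\theta t}t^{2H}\,dt\big)=\frac{1}{\theta}\cdot\frac{\Gamma(2H+1)}{\theta^{2H+1}}$, which is the claimed value.

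For $l_H$ I would exploit the symmetry $t\leftrightarrow s$: the two regions $\{t>s>0\}$ and $\{s>t>0\}$ contribute equally, so it suffices to double the contribution of $\{t>s>0\}$. On that region the substitution $u=t-s>0$, $v=s>0$ turns the integral into $\int_0^{\infty}\int_0^{\infty}e^{-\theta u}\,u^{2H}\,e^{-2\theta v}\,du\,dv$, which again factors and yields $\frac{\Gamma(2H+1)}{\theta^{2H+1}}\cdot\frac{1}{2\theta}$; doubling gives $l_H=\Gamma(2H+1)/\theta^{2H+2}$, so in particular $l_H=k_H$.

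For $m_H$ the substitution $u=t+s$, $v=s$ maps the quadrant onto $\{0<v<u\}$ and makes $e^{-\theta t}e^{-\theta s}=e^{-\theta u}$, so $m_H=\int_0^{\infty}e^{-\theta u}u^{2H}\big(\int_0^{u}dv\big)\,du=\int_0^{\infty}e^{-\theta u}u^{2H+1}\,du=\Gamma(2H+2)/\theta^{2H+2}$. I do not expect any genuine obstacle here; the only points needing a word of justification are the use of Tonelli (immediate from nonnegativity of the integrands) and the convergence of the Gamma integrals, which holds because $2H+1>0$ for $H\in(0,1)$.
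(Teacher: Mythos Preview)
Your proof is correct and follows essentially the same route as the paper: factor $k_H$ directly, use the symmetry in $(s,t)$ together with the substitution $u=t-s$ for $l_H$, and the substitution $u=t+s$ for $m_H$, each time reducing to the Gamma integral. The only cosmetic difference is that for $l_H$ you take $(u,v)=(t-s,s)$, which lands on the full quadrant and lets you factor immediately, whereas the paper keeps $t$ as the second variable and then swaps the order of integration; the computations are otherwise identical.
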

\begin{proof}We have
\begin{eqnarray*}  k_H&=&\int_0^{\infty}\int_0^{\infty} e^{-\theta t} e^{-\theta
s}t^{2H}dsdt\\
&=&\frac{1}{\theta}\int_0^{\infty} e^{-\theta t}
t^{2H}dt\\&=&\frac{\Gamma(2H+1)}{\theta^{2H+2}}.
\end{eqnarray*}
Further, we have
\begin{eqnarray*}
 l_H&=&\int_0^{\infty}\int_0^{\infty} e^{-\theta t} e^{-\theta
s}|t-s|^{2H}dsdt\\&=& 2\int_0^{\infty}\int_0^{t} e^{-\theta t}
e^{-\theta s}(t-s)^{2H}dsdt\\&=& 2\int_0^{\infty}e^{-2\theta
t}\int_0^{t}
 e^{\theta u}u^{2H}dudt
 \\&=& 2\int_0^{\infty}e^{\theta u}u^{2H}\int_u^{\infty}e^{-2\theta t}
 dtdu
 \\&=& \frac{1}{\theta}\int_0^{\infty}e^{-\theta u}u^{2H}du\\
&=&\frac{\Gamma(2H+1)}{\theta^{2H+2}}.
\end{eqnarray*}
Finally,  we have
\begin{eqnarray*}
 m_H&=&\int_0^{\infty}\int_0^{\infty} e^{-\theta t} e^{-\theta
s}(t+s)^{2H}dsdt\\&=&\int_0^{\infty}\int_t^{\infty} e^{-\theta u}
u^{2H}dudt
\\&=&\int_0^{\infty} e^{-\theta u}
u^{2H+1}du\\
&=&\frac{\Gamma(2H+2)}{\theta^{2H+2}}.
\end{eqnarray*}
\end{proof}

\subsection{Fractional Vasicek process}\label{section-fBmV}
The  fractional Brownian motion (fBm) $B^H:= \left\{B_t^H, t\geq
0\right\}$ with Hurst parameter $H\in(0,1)$, is defined as a
centered Gaussian process starting from zero with covariance
\[E\left(B^H_tB^H_s\right)=\frac{1}{2}\left(t^{2H}+s^{2H}-|t-s|^{2H}\right).\]
Note that, when $H=\frac12$, $B^{\frac12}$ is a standard Brownian
motion. \\
We have
\begin{eqnarray}E\left[\left(B^H_t-B^H_s\right)^2\right]=|s-t|^{2H};\quad s,\
t\geq~0. \label{quasi-helix-fBm}\end{eqnarray} Let us first start
with the following simulated path of the fractional Vasicek process,
i.e., when $G=B^H$ in (\ref{GV}),
 \begin{eqnarray}X_0=0;\quad dX_t=\theta\left(\mu+X_t\right)dt+dB^H_t.
 \label{fBmV}\end{eqnarray}
\begin{itemize}
    \item First, we generate the fractional Brownian motion using the wavelet method (see \cite{AS}).
    \item After that we simulate the process (\ref{fBmV}) using the Euler-Maruyama method for
    different values of $H$, $\theta$ and $\mu$ (see Figure \ref{fvp}).
\end{itemize}
We simulate a sample path on the interval $[0,1]$ using a regular
partition of 10,000 intervals.

\begin{figure}[H]
    \caption{The sample path of Fractional Vasicek process. }
    \label{fvp}
    \begin{center}
        {\includegraphics[ width=3.5cm]{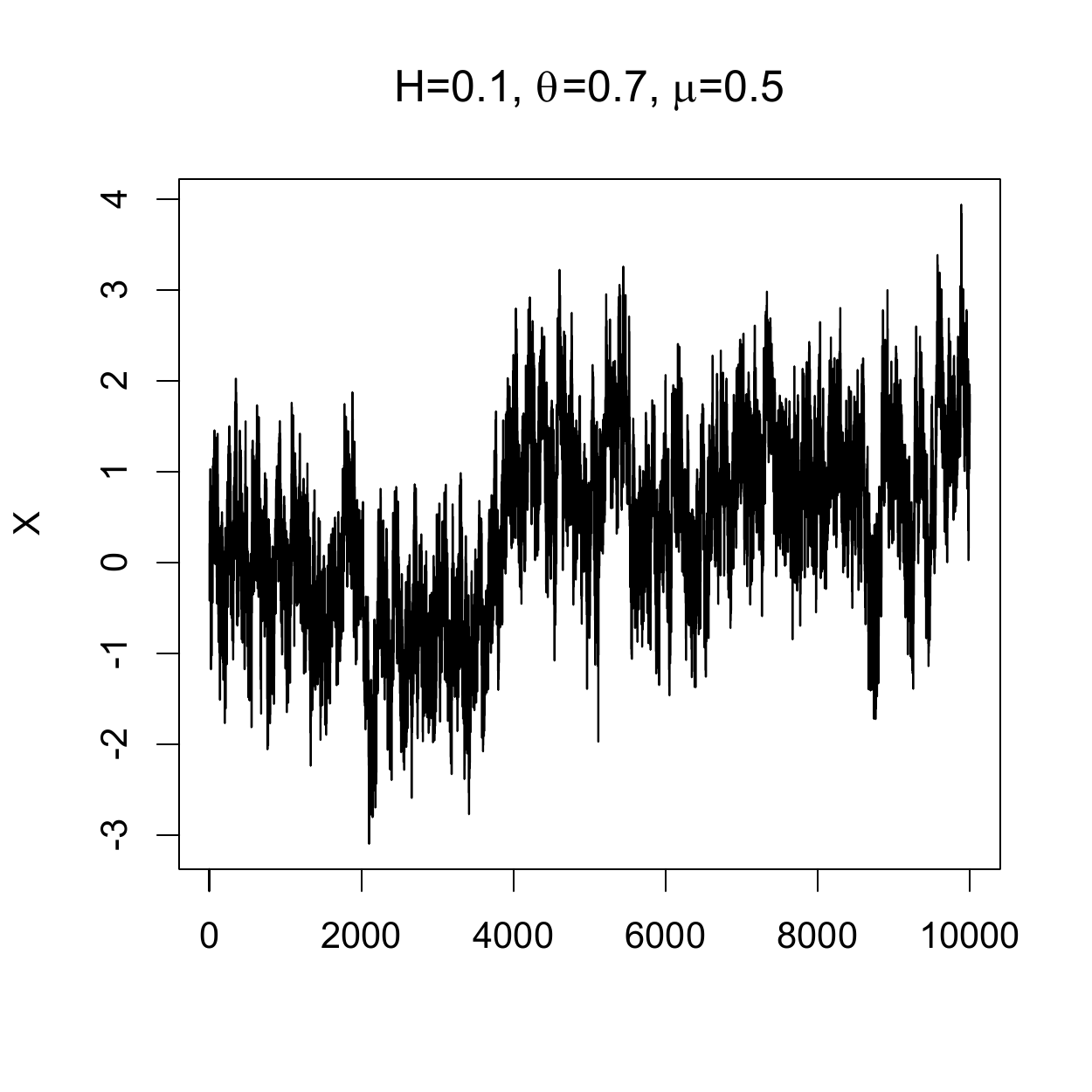}}
        {\includegraphics[ width=3.5cm]{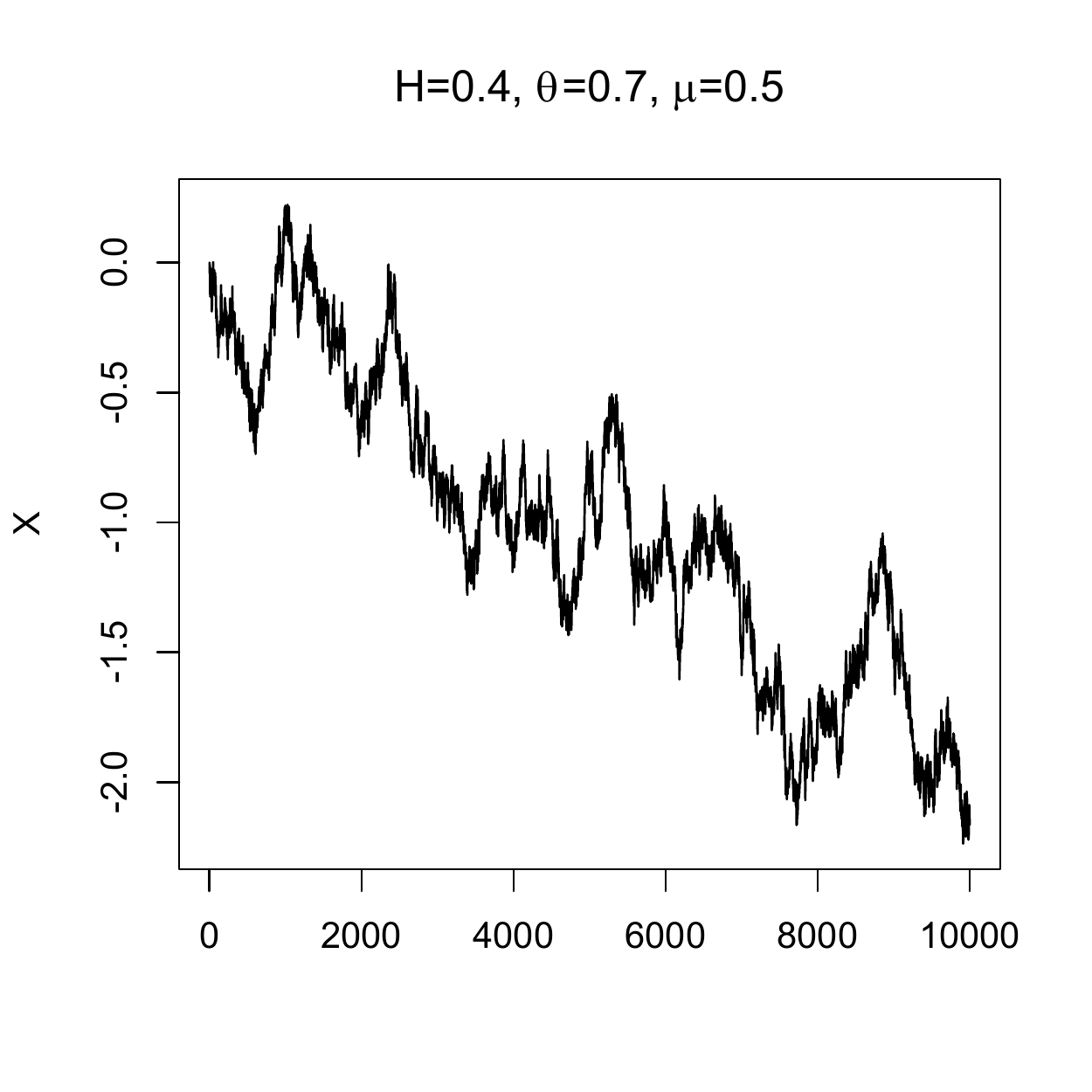}}
        {\includegraphics[ width=3.5cm]{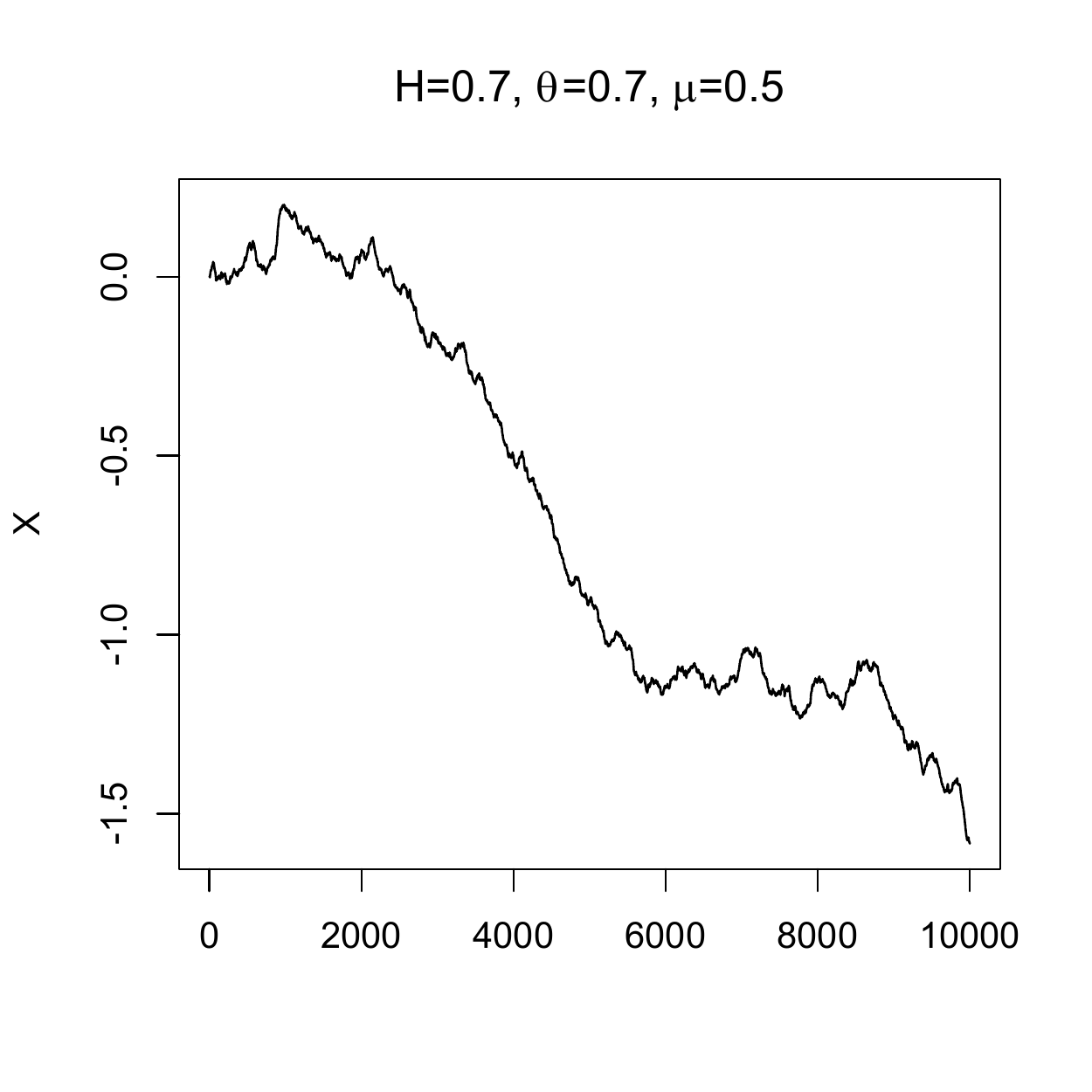}}
        {\includegraphics[ width=3.5cm]{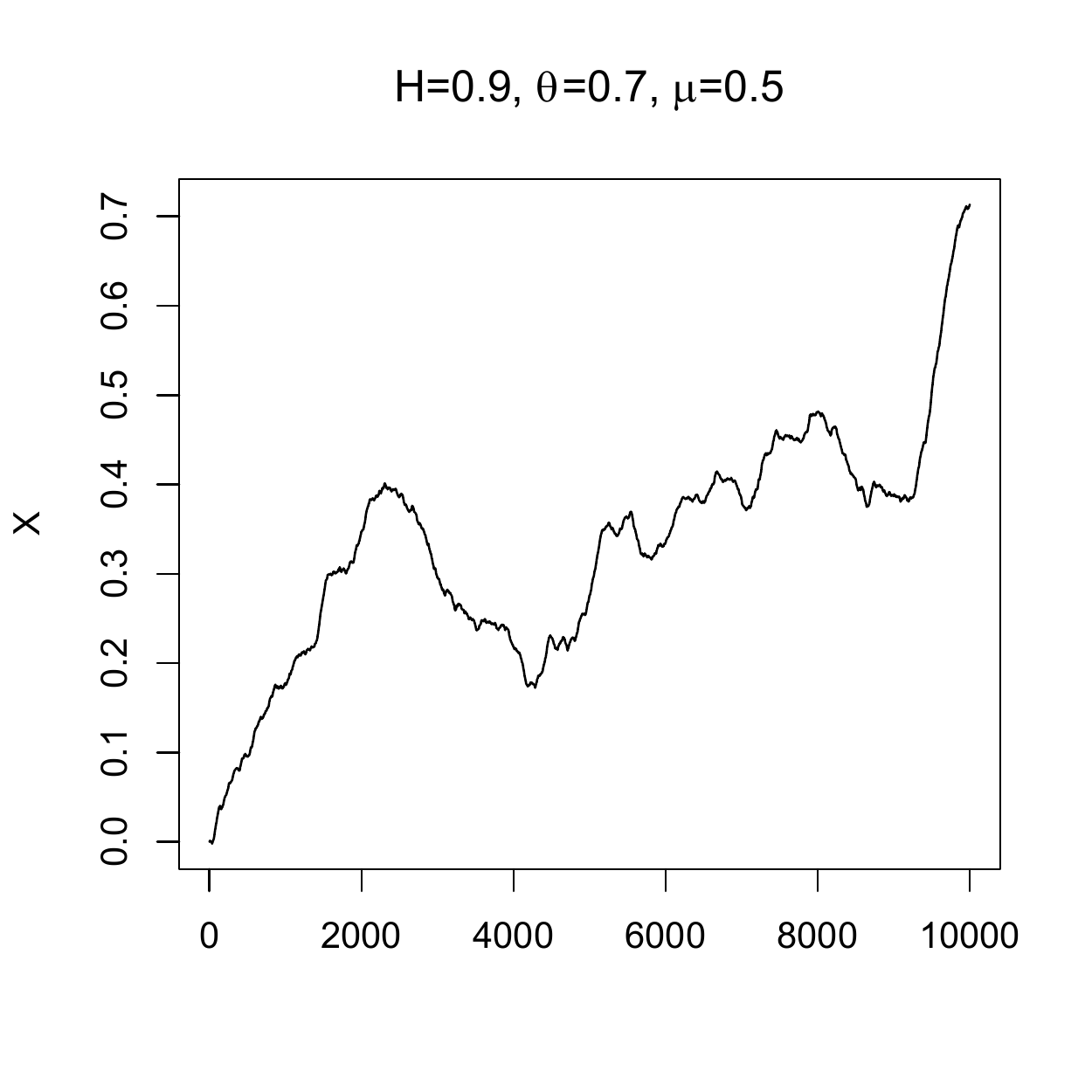}}
    \end{center}
\end{figure}

\noindent   Let us now discuss the asymptotic behavior of
$\widetilde{\theta}_{T}$  and $\widetilde{\mu}_{T}$.
 Thanks to (\ref{quasi-helix-fBm}), the process $B^H$ satisfies the assumptions $(\mathcal{A}_1)$ and $(\mathcal{A}_2)$ for $G=B^H$ and
 $\gamma=\eta=H$.\\
Moreover, by \cite[Proposition 3.1.]{EEO}, the assumptions
$(\mathcal{A}_3)$ and $(\mathcal{A}_4)$ hold with
\begin{eqnarray}\sigma_{B^H}^2=\frac{H\Gamma(2H)}{\theta^{2H}},\quad
 \lambda_{B^H}= 1.\label{sigma-fBm}\end{eqnarray}
 It remains to check the assumption $(\mathcal{A}_5)$ for $G=B^H$ and $\eta=H$. Fix
 $s\geq0$, using Taylor's expansion
 we have as $T\rightarrow\infty$,
\begin{eqnarray*}\frac{E\left(B^H_sB^H_T\right)}{T^{H}}
&=&\frac{1}{2T^{H}}\left(s^{2H}+T^{2H}-(T-s)^{2H}\right)\\
&=&\frac{s^{2H}}{2T^{H}}+\frac{T^{H}}{2}\left(1-(1-\frac{s}{T})^{2H}\right)
\\
&=&\frac{s^{2H}}{2T^{H}}-\frac{T^{H}}{2}\left(2H\frac{s}{T}+o(1/T)\right)
\\
&=&\frac{s^{2H}}{2T^{H}}-\frac{T^{H-1}}{2}\left(2Hs+o(1)\right)
\longrightarrow0.
\end{eqnarray*}
Furthermore, by   (\ref{IBP}) and the usual integration by parts
formula, we obtain
\begin{eqnarray*}E\left[\frac{B^H_T}{T^{H}}
e^{-\theta T}\int_0^Te^{\theta
r}dB^H_r\right]&=&E\left[\frac{B^H_T}{T^{H}} \left(B^H_T-\theta
e^{-\theta T} \int_0^Te^{\theta
t}B^H_tdt\right)\right]\\&=&\frac{1}{T^{H}}\left[
T^{2H}-\frac{\theta}{2} e^{-\theta T} \int_0^Te^{\theta
t}\left(T^{2H}+t^{2H}-(T-t)^{2H}\right)dt\right]
\\&=&\frac{1}{T^{H}}\left[
T^{2H}e^{-\theta T}+H e^{-\theta T} \int_0^Tt^{2H-1}e^{\theta t}
dt+\frac{\theta}{2} e^{-\theta T}\int_0^Te^{\theta
t}(T-t)^{2H}dt\right]\\&&\longrightarrow0 \mbox{ as
$T\rightarrow\infty$},
\end{eqnarray*} where we used that as
$T\rightarrow\infty$,
\[\frac{e^{-\theta T}}{T^{H}}\int_0^Te^{\theta
t}(T-t)^{2H}dt=\frac{1}{T^{H}}\int_0^Te^{-\theta
x}x^{2H}dx\longrightarrow0,\] and using  L'H\^{o}pital's rule,
\begin{eqnarray*} \lim_{T\rightarrow\infty}\frac{e^{-\theta T}}{T^{H}} \int_0^Tt^{2H-1}e^{\theta t}
dt=\lim_{T\rightarrow\infty}\frac{T^{2H-1}}{\theta T^{H}+HT^{H-1}}
=\lim_{T\rightarrow\infty}\frac{T^{H-1}}{\theta +\frac{H}{T}}=0.
\end{eqnarray*}
Thus the assumption $(\mathcal{A}_5)$ holds.\\
 On the
other hand, by Lemma \ref{key-applications}, we have for every
$H\in(0,1)$, the variance of $\zeta_{B^H,\infty}:=\theta
\int_0^{\infty}e^{-\theta s}B^H_sds$, given in (\ref{zeta-cv}) when
$G=B^H$, is equal to
\begin{eqnarray*}
E(\zeta_{B^H,\infty}^2)&=&\theta^2
\int_0^{\infty}\int_0^{\infty}e^{-\theta s}e^{-\theta
t}E(B^H_sB^H_t)dsdt\nonumber\\
&=&\theta^2 \left(k_H-\frac12 l_H\right)\nonumber\\
&=&\frac{H\Gamma(2H)}{\theta^{2H}},\label{zeta-fBm}
\end{eqnarray*}
which is equal in this case to $\sigma_{B^H}^2$, given in
(\ref{sigma-fBm}). \\Thus, we obtain the following result.
\begin{proposition}
Assume that $H\in(0,1)$ and the process $G$, given in (\ref{GV}), is
a fBm $B^H$. Then, almost surely, as $T\rightarrow\infty$,
\begin{eqnarray*} \left(\widetilde{\theta}_{T},\widetilde{\mu}_{T}\right)\longrightarrow \left(\theta,\mu\right),
\quad
\left(\widetilde{\theta}_{T},\widetilde{\alpha}_{T}\right)\longrightarrow
\left(\theta,\alpha\right).
 \end{eqnarray*}
 In addition, if
$N_1\sim\mathcal{N}(0,1)$, $N_2\sim\mathcal{N}(0,1)$ and $B^H$ are
independent, then as $T\rightarrow\infty$,
\begin{eqnarray*}\left(e^{\theta T}(\widetilde{\theta}_T-\theta),T^{1-H}\left(\widetilde{\mu}_T-\mu
\right)\right)\overset{Law}{\longrightarrow}\left(\frac{2\theta\sigma_{B^H}N_2}{\mu
+\zeta_{B^H,\infty}},\frac{1}{\theta}N_1\right),
\end{eqnarray*}
\begin{eqnarray*}\left(e^{\theta
T}(\widetilde{\theta}_T-\theta),T^{1-H}\left(\widetilde{\alpha}_T-\alpha
\right)\right)\overset{Law}{\longrightarrow}\left(\frac{2\theta\sigma_{B^H}N_2}{\mu
+\zeta_{B^H,\infty}}, N_1\right),
\end{eqnarray*}
where $\sigma_{B^H}$ is defined in (\ref{sigma-fBm}), and
$\zeta_{B^H,\infty}\sim \mathcal{N}(0,\sigma_{B^H}^2)$ is
independent of $N_1$ and $N_2$.
\end{proposition}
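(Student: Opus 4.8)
The plan is to obtain this proposition as a direct application of the two main theorems of Section~\ref{sect-asymp} (the strong consistency theorem and the asymptotic distribution theorem), so that the actual work reduces to checking that the fractional Brownian motion $B^H$ satisfies the hypotheses $(\mathcal{A}_1)$--$(\mathcal{A}_5)$ with the explicit constants claimed. First I would record that by (\ref{quasi-helix-fBm}) one has $E[(B^H_t-B^H_s)^2]=|t-s|^{2H}$, which immediately gives $(\mathcal{A}_1)$ with $c=1$ and $\gamma=H$, and simultaneously $(\mathcal{A}_2)$ with $\eta=H$ and $\lambda_{B^H}=1$ since $E[(B^H_T)^2]=T^{2H}$ exactly. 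From here the almost sure convergences $(\widetilde{\theta}_T,\widetilde{\mu}_T)\to(\theta,\mu)$ and hence $\widetilde{\alpha}_T=\widetilde{\mu}_T\widetilde{\theta}_T\to\alpha$ follow directly from the strong consistency theorem applied to $G=B^H$.

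For the distributional statement I would invoke \cite[Proposition 3.1]{EEO} to obtain $(\mathcal{A}_3)$ and $(\mathcal{A}_4)$ together with the value $\sigma_{B^H}^2=H\Gamma(2H)/\theta^{2H}$ recorded in (\ref{sigma-fBm}). It then remains to verify $(\mathcal{A}_5)$, which is the only genuinely new computation. Writing $E(B^H_sB^H_T)=\tfrac12\bigl(s^{2H}+T^{2H}-(T-s)^{2H}\bigr)$ and Taylor-expanding $(1-s/T)^{2H}$ as $T\to\infty$ shows that $E(B^H_sB^H_T)/T^{H}\to0$ for each fixed $s\geq0$; for the second part of $(\mathcal{A}_5)$ I would use (\ref{IBP}) to rewrite $e^{-\theta T}\int_0^Te^{\theta r}dB^H_r=B^H_T-\theta e^{-\theta T}\int_0^Te^{\theta t}B^H_tdt$, substitute the covariance, and estimate the three resulting integral terms, the decisive one being $\tfrac{e^{-\theta T}}{T^{H}}\int_0^T t^{2H-1}e^{\theta t}\,dt\to0$, which I would handle by L'H\^{o}pital's rule. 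With $(\mathcal{A}_1)$--$(\mathcal{A}_5)$ in hand, the asymptotic distribution theorem yields the joint convergences (\ref{cv-joint-theta-mu-law}) and (\ref{cv-joint-theta-alpha-law}) specialized to $\eta=H$ and $\lambda_{B^H}=1$, which is precisely the displayed claim.

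Finally I would identify the law of the limiting random variable $\zeta_{B^H,\infty}=\theta\int_0^\infty e^{-\theta s}B^H_s\,ds$ appearing in the denominator: as an $L^2(\Omega)$ limit of Gaussian Riemann sums it is centered Gaussian, and by Lemma~\ref{key-applications} its variance equals $\theta^2\bigl(k_H-\tfrac12 l_H\bigr)=\tfrac{\Gamma(2H+1)}{2\theta^{2H}}=\tfrac{H\Gamma(2H)}{\theta^{2H}}=\sigma_{B^H}^2$; moreover $\zeta_{B^H,\infty}$ is $\sigma\{B^H_t,\,t\geq0\}$-measurable, hence independent of $N_1$ and $N_2$, as already observed just before the theorem. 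The main (and essentially only) obstacle is the verification of $(\mathcal{A}_5)$: the first limit is a routine Taylor estimate, but the second one requires the correct splitting of $e^{-\theta T}\int_0^Te^{\theta r}dB^H_r$ via (\ref{IBP}) followed by a careful L'H\^{o}pital / dominated-convergence control of the three integral terms; everything else is a direct citation of the general results of Sections~\ref{sect-auxil} and \ref{sect-asymp}.
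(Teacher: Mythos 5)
Your proposal is correct and follows essentially the same route as the paper: it reduces the proposition to the general consistency and asymptotic-distribution theorems by checking $(\mathcal{A}_1)$--$(\mathcal{A}_2)$ directly from the fBm covariance, citing \cite[Proposition 3.1]{EEO} for $(\mathcal{A}_3)$--$(\mathcal{A}_4)$ with $\sigma_{B^H}^2=H\Gamma(2H)/\theta^{2H}$, verifying $(\mathcal{A}_5)$ via the Taylor expansion of the covariance and the integration-by-parts plus L'H\^opital estimates, and identifying the Gaussian law of $\zeta_{B^H,\infty}$ through Lemma \ref{key-applications}. This matches the paper's own argument, including the decisive estimate $\frac{e^{-\theta T}}{T^{H}}\int_0^T t^{2H-1}e^{\theta t}\,dt\to 0$ and the variance identity $\theta^2\bigl(k_H-\tfrac12 l_H\bigr)=\sigma_{B^H}^2$.
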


\subsection{Subfractional  Vasicek process}  \label{section-subfBmV}
The subfractional Brownian motion (subfBm)  $S^H:= \left\{S_t^H,
t\geq 0\right\}$ with parameter $H\in(0, 1)$   is a centered
Gaussian process with covariance function
\[E\left(S^H_tS^H_s\right)=t^{2H}+s^{2H}-\frac{1}{2}\left((t+s)^{2H}+|t-s|^{2H}\right);\quad s,\ t\geq0.\]
In particular, for every $T\geq0$,
\[\frac{E[(S^H_T)^2]}{T^{2H}}=2-2^{2H-1}.\]
 Note that, when $H=\frac12$, $S^{\frac12}$ is a
standard Brownian motion.  Moreover, it is known that
$$E\left[\left(S^H_t-S^H_s\right)^2\right]\leq
(2-2^{2H-1})|s-t|^{2H};\quad s,\ t\geq~0.$$ Let us first start with
the following simulated path of the subfractional Vasicek process,
i.e., when $G=S^H$ in (\ref{GV}),
\begin{eqnarray}X_0=0;\quad dX_t=\theta\left(\mu+X_t\right)dt+dS^H_t.
\label{sfBmV}\end{eqnarray}
\begin{itemize}
    \item First, we generate the subfractional Brownian motion using \cite{MF}.
    \item After that we simulate the process (\ref{sfBmV}) using the Euler-Maruyama method for
    different values of $H$, $\theta$ and $\mu$ (see Figure \ref{sfvp}).
\end{itemize}
We simulate a sample path on the interval $[0,1]$ using a regular
partition of 10,000 intervals.

\begin{figure}[H]
    \caption{The sample path of subfractional Vasicek process. }
    \label{sfvp}
    \begin{center}
        {\includegraphics[ width=3.5cm]{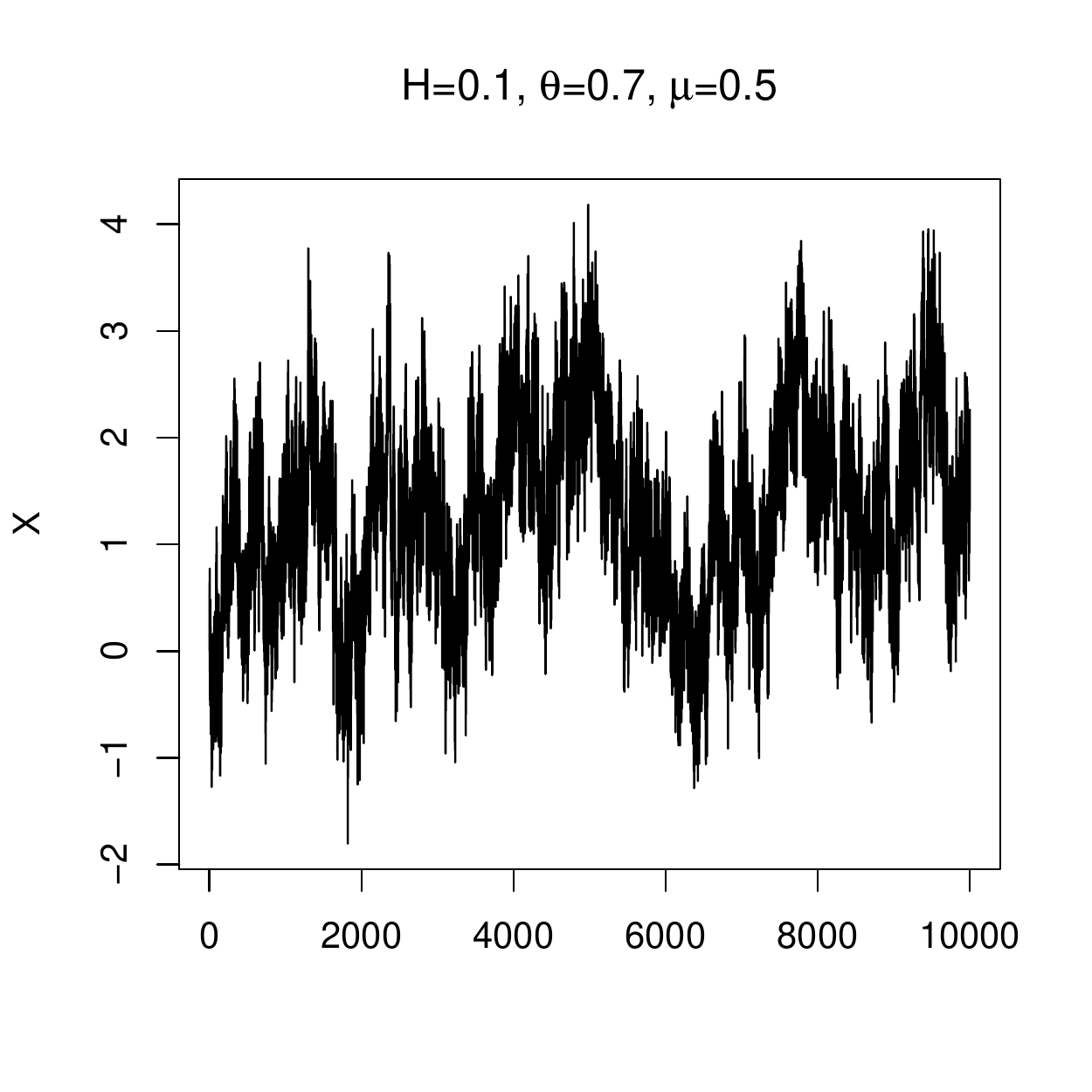}}
        {\includegraphics[ width=3.5cm]{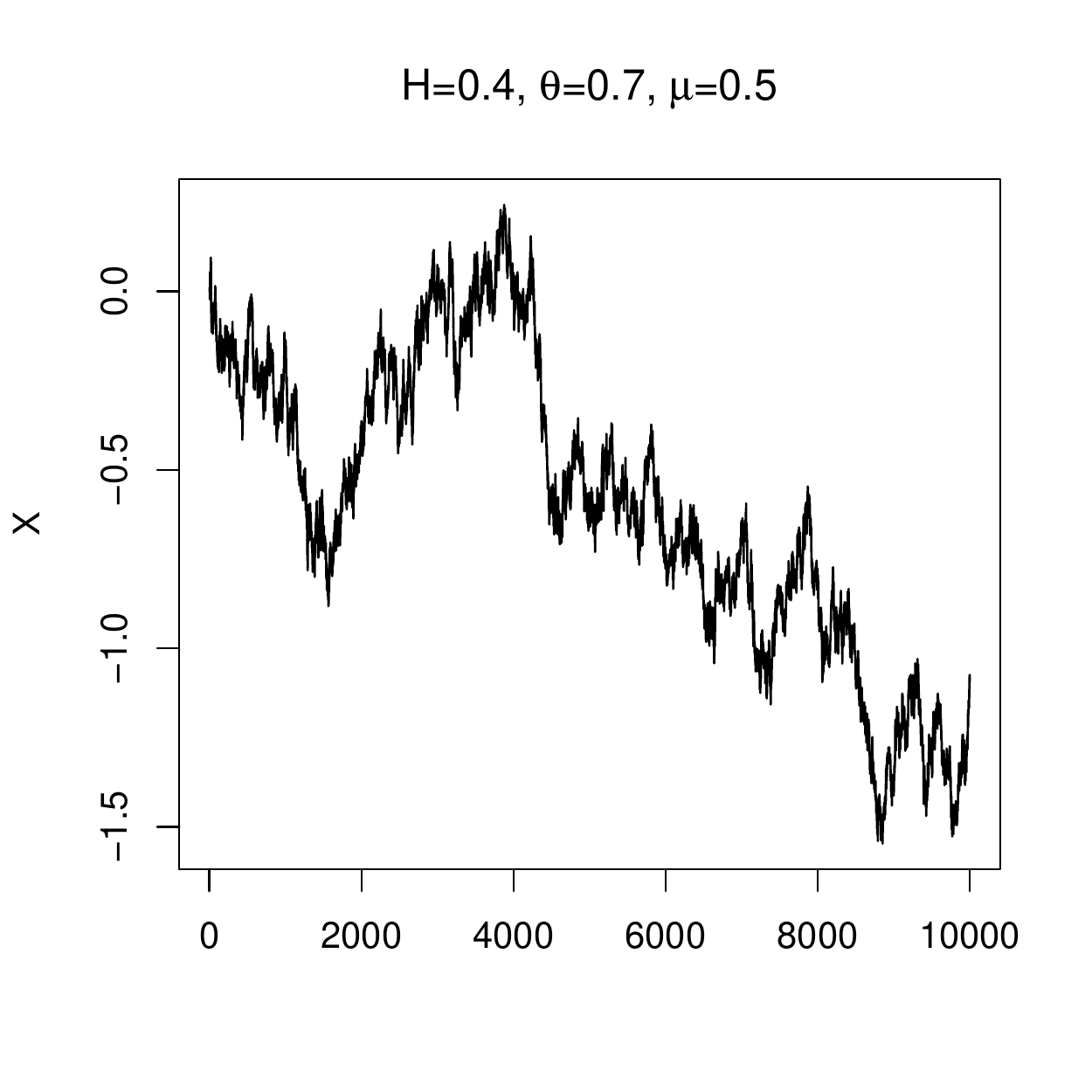}}
        {\includegraphics[ width=3.5cm]{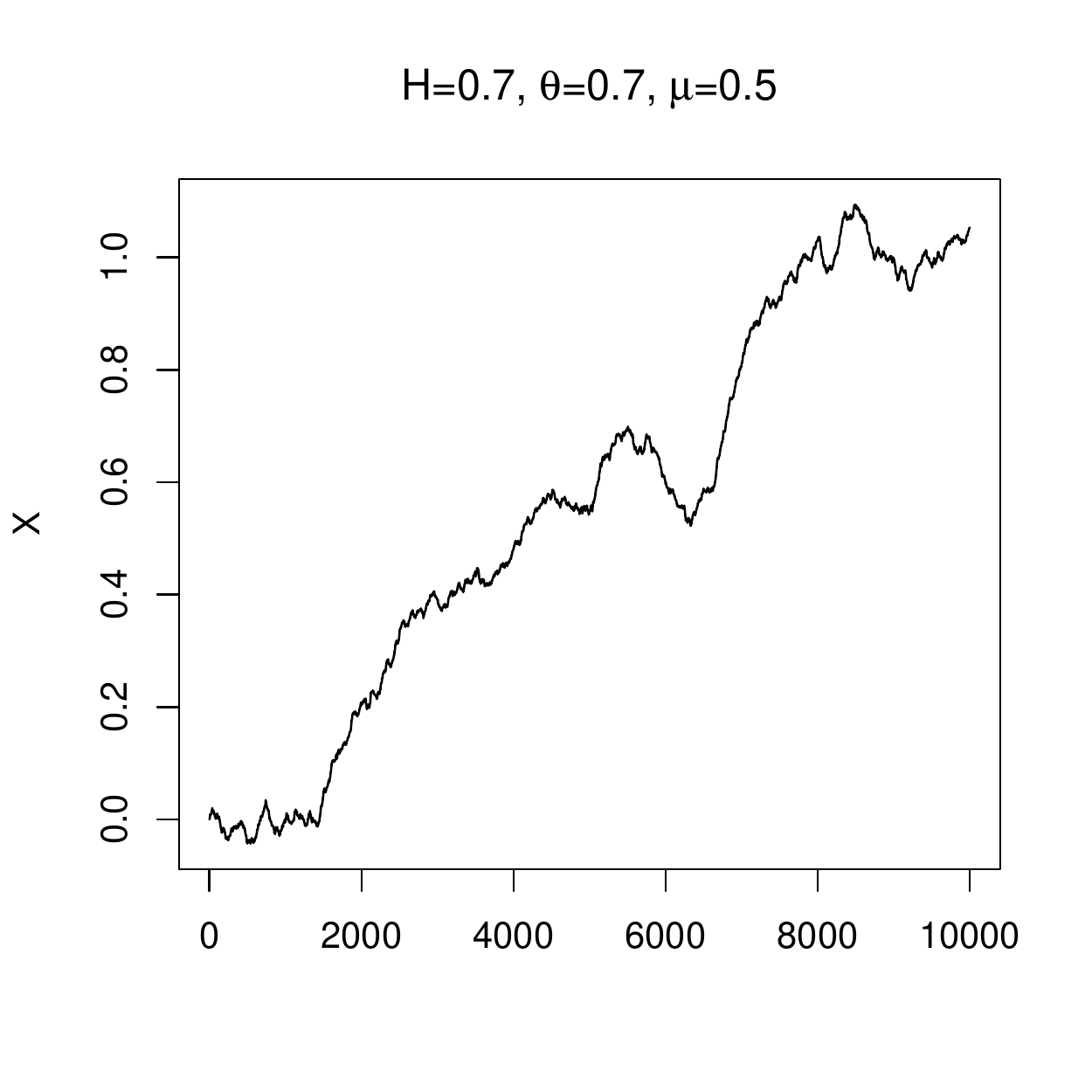}}
        {\includegraphics[ width=3.5cm]{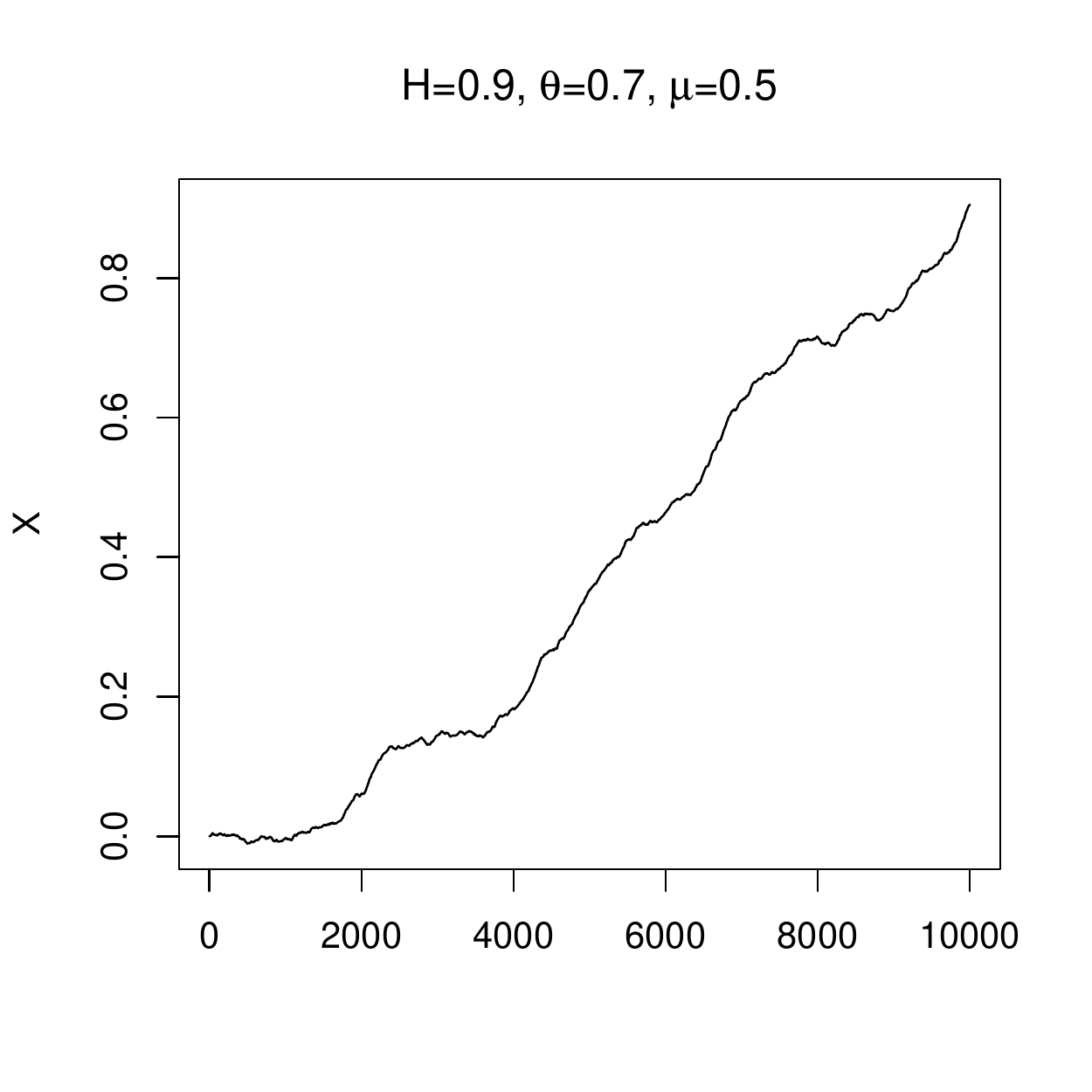}}
    \end{center}
\end{figure}

\noindent From the  properties of $S^H$  listed above, it is clear
that the process $S^H$ satisfies the assumptions $(\mathcal{A}_1)$
and $(\mathcal{A}_2)$ for $G=S^H$ and
 $\gamma=\eta=H$.\\
On the other hand, from \cite[Proposition 3.2.]{EEO}, the
assumptions $(\mathcal{A}_3)$ and $(\mathcal{A}_4)$ hold for
\begin{eqnarray}\sigma_{S^H}^2=\frac{H\Gamma(2H)}{\theta^{2H}},\quad
\lambda_{S^H}^2= 2-2^{2H-1}.\label{sigma-subfBm}\end{eqnarray}
 It remains to check the assumption $(\mathcal{A}_5)$ for $G=S^H$ and $\eta=H$. Fix
 $s\geq0$, using Taylor's expansion
 we have as $T\rightarrow\infty$,
\begin{eqnarray*}\frac{E\left(S^H_sS^H_T\right)}{T^{H}}
&=&\frac{1}{T^{H}}\left(s^{2H}+T^{2H}-\frac12\left[(T+s)^{2H}+(T-s)^{2H}\right]\right)\\
&=&\frac{s^{2H}}{T^{H}}+T^{H}\left(1-\frac12\left[(1+\frac{s}{T})^{2H}+(1-\frac{s}{T})^{2H}\right]\right)
\\
&=&\frac{s^{2H}}{T^{H}}- T^{H} o(1/T)
\\
&=&\frac{s^{2H}}{T^{H}}- T^{H-1} o(1) \longrightarrow0.
\end{eqnarray*}
Furthermore, by   (\ref{IBP}) and the usual integration by parts
formula, we obtain
\begin{eqnarray*}&&E\left[\frac{S^H_T}{T^{H}}
e^{-\theta T}\int_0^Te^{\theta
r}dS^H_r\right]\\&&=E\left[\frac{S^H_T}{T^{H}} \left(S^H_T-\theta
e^{-\theta T} \int_0^Te^{\theta
t}S^H_tdt\right)\right]\\&&=\frac{1}{T^{H}}\left[ (2-
2^{2H-1})T^{2H}- \theta e^{-\theta T} \int_0^Te^{\theta
t}\left(T^{2H}+t^{2H}-\frac12\left[(T+t)^{2H}+(T-t)^{2H}\right]\right)dt\right]
\\&&=\frac{1}{T^{H}}\left[ 2H e^{-\theta T} \int_0^Te^{\theta
t}\left(t^{2H-1}-\frac12\left[(T+t)^{2H-1}-(T-t)^{2H-1}\right]\right)dt\right]\\&&\longrightarrow0
\mbox{ as $T\rightarrow\infty$},
\end{eqnarray*} where the latter convergence comes from the same convergences as in the case of the fBm, and the fact that as
$T\rightarrow\infty$,
\begin{eqnarray*}\frac{e^{-\theta T}}{T^{H}}\int_0^Te^{\theta
t}(T+t)^{2H-1}dt&=&\frac{e^{-2\theta T}}{T^{H}}\int_T^{2T}e^{\theta
x}x^{2H-1}dx\\&\leq& \frac{e^{-2\theta
T}}{T^{H}}(T^{2H-1}+(2T)^{2H-1})\int_T^{2T}e^{\theta
x}dx\longrightarrow0,
\end{eqnarray*}
which proves that   $(\mathcal{A}_5)$ holds.\\
Further, by Lemma \ref{key-applications}, we have for every
$H\in(0,1)$, the variance of $\zeta_{S^H,\infty}:=\theta
\int_0^{\infty}e^{-\theta s}S^H_sds$, given in (\ref{zeta-cv}) when
$G=S^H$, is equal to
\begin{eqnarray}
E(\zeta_{S^H,\infty}^2)&=&\theta^2
\int_0^{\infty}\int_0^{\infty}e^{-\theta s}e^{-\theta
t}E(S^H_sS^H_t)dst\nonumber\\
&=&\theta^2 \left(2k_H-\frac12( l_H+m_H)\right)\nonumber
\\
&=&\frac{1}{2\theta^{2H}} \left(3\Gamma(2H+1)-\Gamma(2H+2)\right)\nonumber\\
&=&\frac{(1-H)\Gamma(2H+1)}{\theta^{2H}}.\label{zeta-subfBm}
\end{eqnarray}
We therefore obtain the following result.
\begin{proposition}
Assume that $H\in(0,1)$ and the process $G$, given in (\ref{GV}), is
a subfBm $S^H$. Then, almost surely, as $T\rightarrow\infty$,
\begin{eqnarray*} \left(\widetilde{\theta}_{T},\widetilde{\mu}_{T}\right)\longrightarrow \left(\theta,\mu\right),
\quad
\left(\widetilde{\theta}_{T},\widetilde{\alpha}_{T}\right)\longrightarrow
\left(\theta,\alpha\right).
 \end{eqnarray*}
 In addition,  if
$N_1\sim\mathcal{N}(0,1)$, $N_2\sim\mathcal{N}(0,1)$ and $S^H$ are
independent, then as $T\rightarrow\infty$,
\begin{eqnarray*}\left(e^{\theta T}(\widetilde{\theta}_T-\theta),T^{1-H}\left(\widetilde{\mu}_T-\mu
\right)\right)\overset{Law}{\longrightarrow}\left(\frac{2\theta\sigma_{S^H}N_2}{\mu
+\zeta_{S^H,\infty}},\frac{\lambda_{S^H}}{\theta}N_1\right),
\end{eqnarray*}
\begin{eqnarray*}\left(e^{\theta
T}(\widetilde{\theta}_T-\theta),T^{1-H}\left(\widetilde{\alpha}_T-\alpha
\right)\right)\overset{Law}{\longrightarrow}\left(\frac{2\theta\sigma_{S^H}N_2}{\mu
+\zeta_{S^H,\infty}},\lambda_{S^H} N_1\right),
\end{eqnarray*}
where $\sigma_{S^H}$ and $\lambda_{S^H}$ are defined in
(\ref{sigma-subfBm}), and $\zeta_{S^H,\infty}\sim
\mathcal{N}(0,E(\zeta_{S^H,\infty}^2))$ is independent of $N_1$ and
$N_2$, with $E(\zeta_{S^H,\infty}^2)$ is given in
(\ref{zeta-subfBm}).
\end{proposition}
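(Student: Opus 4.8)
The strategy is to verify that the subfractional Brownian motion $S^H$ fulfils the five structural hypotheses $(\mathcal{A}_1)$--$(\mathcal{A}_5)$ and then to apply verbatim the strong consistency theorem and the asymptotic distribution theorem of Section~\ref{sect-asymp}; the only quantities entering the limiting laws are $\eta$, $\sigma_G$, $\lambda_G$ and the distribution of $\zeta_\infty$, all of which become explicit here. From the bound $E[(S^H_t-S^H_s)^2]\le(2-2^{2H-1})|t-s|^{2H}$ one reads off $(\mathcal{A}_1)$ with $\gamma=H$, and from $E[(S^H_T)^2]/T^{2H}=2-2^{2H-1}$ one gets $(\mathcal{A}_2)$ with $\eta=H$ and $\lambda_{S^H}^2=2-2^{2H-1}$; assumptions $(\mathcal{A}_3)$ and $(\mathcal{A}_4)$, with $\sigma_{S^H}^2=H\Gamma(2H)/\theta^{2H}$, are already available from \cite[Proposition 3.2]{EEO}.

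Hence the only genuinely new verification is $(\mathcal{A}_5)$ for $G=S^H$ with $\eta=H$. For the first limit I would insert $E(S^H_sS^H_T)=s^{2H}+T^{2H}-\tfrac12\big[(T+s)^{2H}+(T-s)^{2H}\big]$ and Taylor-expand $(1\pm s/T)^{2H}$ to second order; the $T^{2H}$ contributions cancel, leaving a term of order $T^{H-2}$ after division by $T^{H}$, which tends to $0$. For the second limit I would use the integration by parts formula (\ref{IBP}) to write $e^{-\theta T}\int_0^T e^{\theta r}dS^H_r=S^H_T-\theta e^{-\theta T}\int_0^T e^{\theta t}S^H_t\,dt$, substitute the covariance of $S^H$, and reduce matters to showing that, as $T\to\infty$, $e^{-\theta T}T^{-H}\int_0^T e^{\theta t}t^{2H-1}\,dt\to0$ (by L'H\^opital's rule) and $e^{-\theta T}T^{-H}\int_0^T e^{\theta t}(T\pm t)^{2H-1}\,dt\to0$ (by the change of variable $x=T\mp t$ together with a crude bound of $x^{2H-1}$ on the relevant interval). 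These are exactly the estimates already carried out for the fractional Brownian motion, so I expect the main obstacle to be careful bookkeeping of the several integrals rather than any new mechanism.

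Once the hypotheses are in place, the almost sure convergences $(\widetilde{\theta}_T,\widetilde{\mu}_T)\to(\theta,\mu)$ and $(\widetilde{\theta}_T,\widetilde{\alpha}_T)\to(\theta,\alpha)$ follow from the strong consistency theorem (which needs only $(\mathcal{A}_1)$), and the two joint convergences in law follow from the asymptotic distribution theorem with $\eta=H$, $\sigma_G=\sigma_{S^H}$ and $\lambda_G=\lambda_{S^H}$. To finish, I would identify the law of $\zeta_{S^H,\infty}=\theta\int_0^\infty e^{-\theta s}S^H_s\,ds$: as a Young (equivalently, Wiener) integral of the Gaussian process $S^H$ it is centered Gaussian, it is $\sigma\{S^H_t,t\ge0\}$-measurable hence independent of $N_1$ and $N_2$, and its variance is computed by expanding $E(\zeta_{S^H,\infty}^2)=\theta^2\int_0^\infty\!\!\int_0^\infty e^{-\theta s}e^{-\theta t}E(S^H_sS^H_t)\,ds\,dt$, splitting $E(S^H_sS^H_t)$ into its $s^{2H}$, $t^{2H}$, $(s+t)^{2H}$ and $|s-t|^{2H}$ parts, and evaluating each with Lemma~\ref{key-applications}; this yields $E(\zeta_{S^H,\infty}^2)=\theta^2\big(2k_H-\tfrac12(l_H+m_H)\big)=(1-H)\Gamma(2H+1)/\theta^{2H}$, which is precisely (\ref{zeta-subfBm}), completing the proof.
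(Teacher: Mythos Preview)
Your proposal is correct and follows essentially the same route as the paper: verify $(\mathcal{A}_1)$--$(\mathcal{A}_5)$ for $S^H$ (with $(\mathcal{A}_3)$, $(\mathcal{A}_4)$ imported from \cite{EEO}), apply the general consistency and asymptotic distribution theorems, and compute $E(\zeta_{S^H,\infty}^2)$ via Lemma~\ref{key-applications}. Your handling of $(\mathcal{A}_5)$---Taylor expansion for the first limit, the integration-by-parts reduction to the three integrals $e^{-\theta T}T^{-H}\int_0^T e^{\theta t}t^{2H-1}\,dt$ and $e^{-\theta T}T^{-H}\int_0^T e^{\theta t}(T\pm t)^{2H-1}\,dt$ for the second---matches the paper's argument exactly, including the appeal to the fBm computations for the first two and the crude bound on $[T,2T]$ for the $(T+t)$ term; only a minor wording point is that $\zeta_{S^H,\infty}=\theta\int_0^\infty e^{-\theta s}S^H_s\,ds$ is an ordinary Lebesgue integral (an $L^2$ limit of Gaussian variables), not a Young integral per se, though this does not affect your conclusion.
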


\subsection{Bifractional  Vasicek process}\label{section-bifBmV}
Let $B^{H,K} := \left\{ B^{H,K}_t, t\geq 0 \right\}$ be a
bifractional Brownian motion (bifBm)  with parameters $H\in (0, 1)$
and $K\in(0,1]$. This means that $B^{H,K}$ is a centered Gaussian
process with the covariance function
\begin{eqnarray*}
E(B^{H,K}_sB^{H,K}_t)=\frac{1}{2^K}\left(\left(t^{2H}+s^{2H}\right)^K-|t-s|^{2HK}\right);
\quad s,t\geq0.
\end{eqnarray*}
In particular, for every $T\geq0$,
\[\frac{E[(B^{H,K}_T)^2]}{T^{2HK}}=1.\]
 Note that the case $K = 1$ corresponds to the  fBm  with Hurst parameter $H$.\\ In addition, the process $B^{H,K}$ verifies, \begin{eqnarray*}
 E\left(\left|B^{H,K}_t-B^{H,K}_s\right|^2\right)\leq
2^{1-K}|t-s|^{2HK}.
\end{eqnarray*}
Let us first start with the following simulated path of the
subfractional Vasicek process, i.e., when $G=B^{H,K}$ in (\ref{GV}),
\begin{eqnarray}X_0=0;\quad dX_t=\theta\left(\mu+X_t\right)dt+dB^{H,K}_t.
\label{bifBmV}\end{eqnarray}
\begin{itemize}
    \item First, we generate the bifractional Brownian motion using the \textbf{R} package \textbf{FieldSim} (see \cite{BIL}).
    \item After that we simulate the process (\ref{bifBmV}) using the Euler-Maruyama method for
    different values of $H$, $\theta$ and $\mu$ (see Figure \ref{bifvp}).
\end{itemize}
We simulate a sample path on the interval $[0,1]$ using a regular
partition of 10,000 intervals.

\begin{figure}[H]
    \caption{The sample path of bifractional Vasicek process. }
    \label{bifvp}
    \begin{center}
        {\includegraphics[ width=3.5cm]{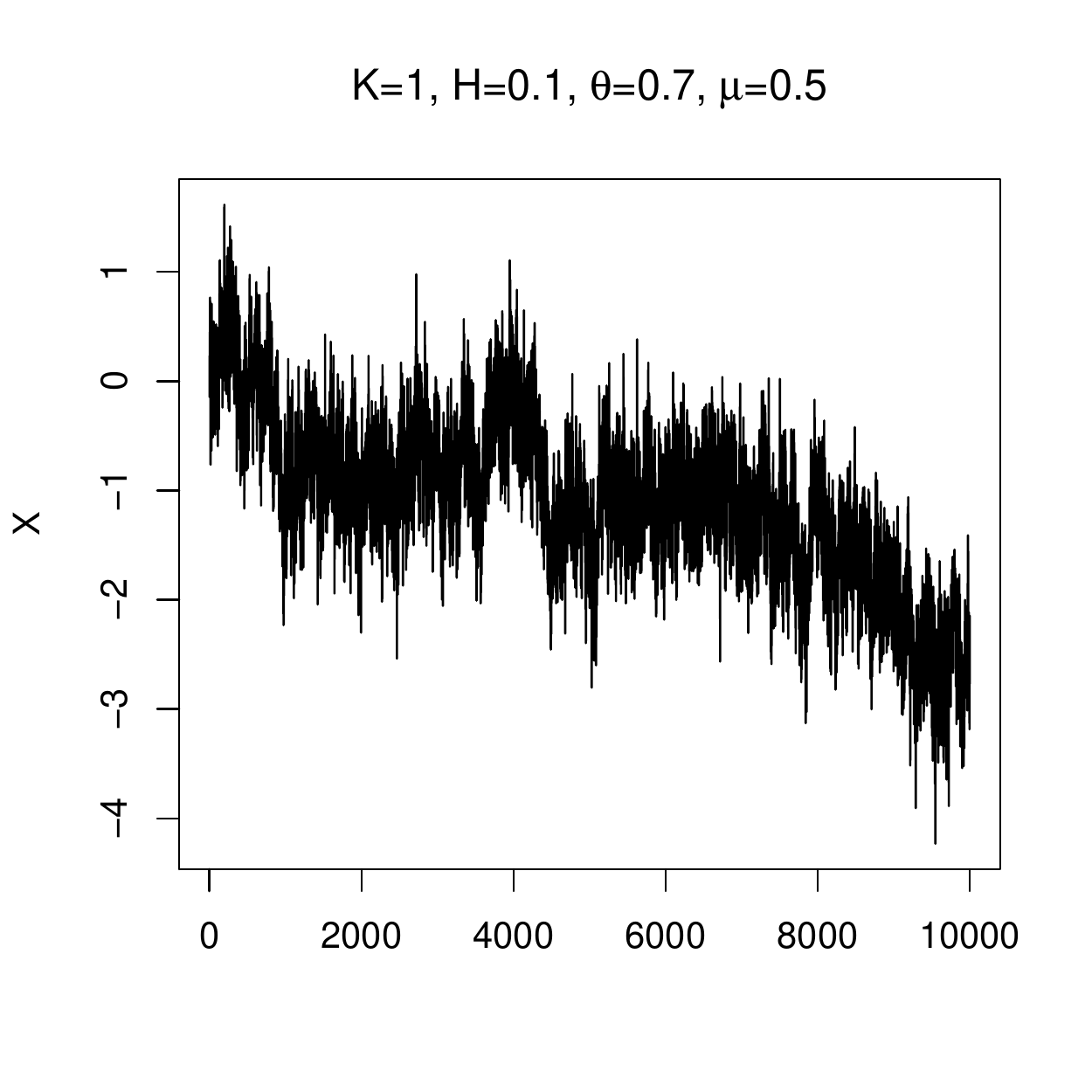}}
        {\includegraphics[ width=3.5cm]{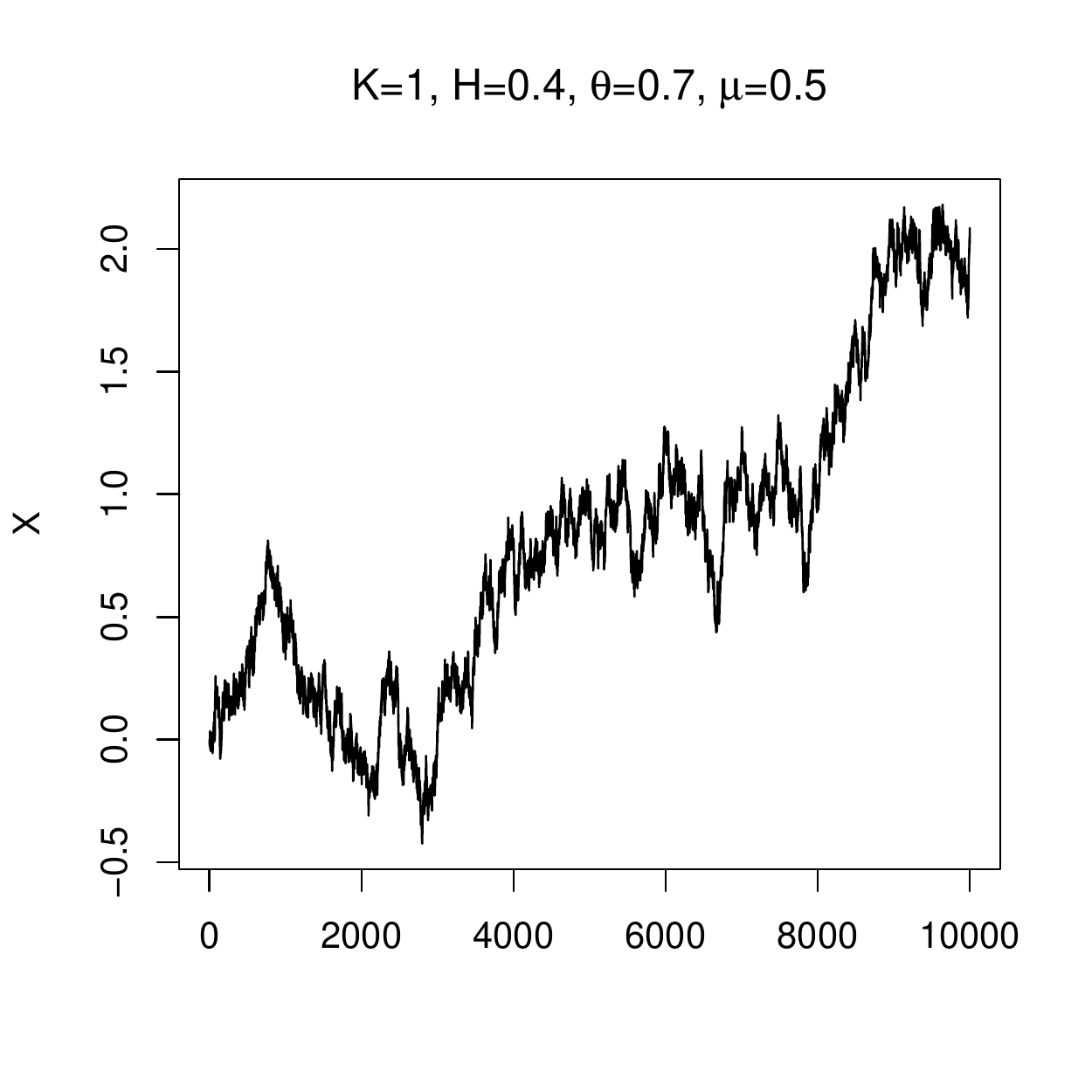}}
        {\includegraphics[ width=3.5cm]{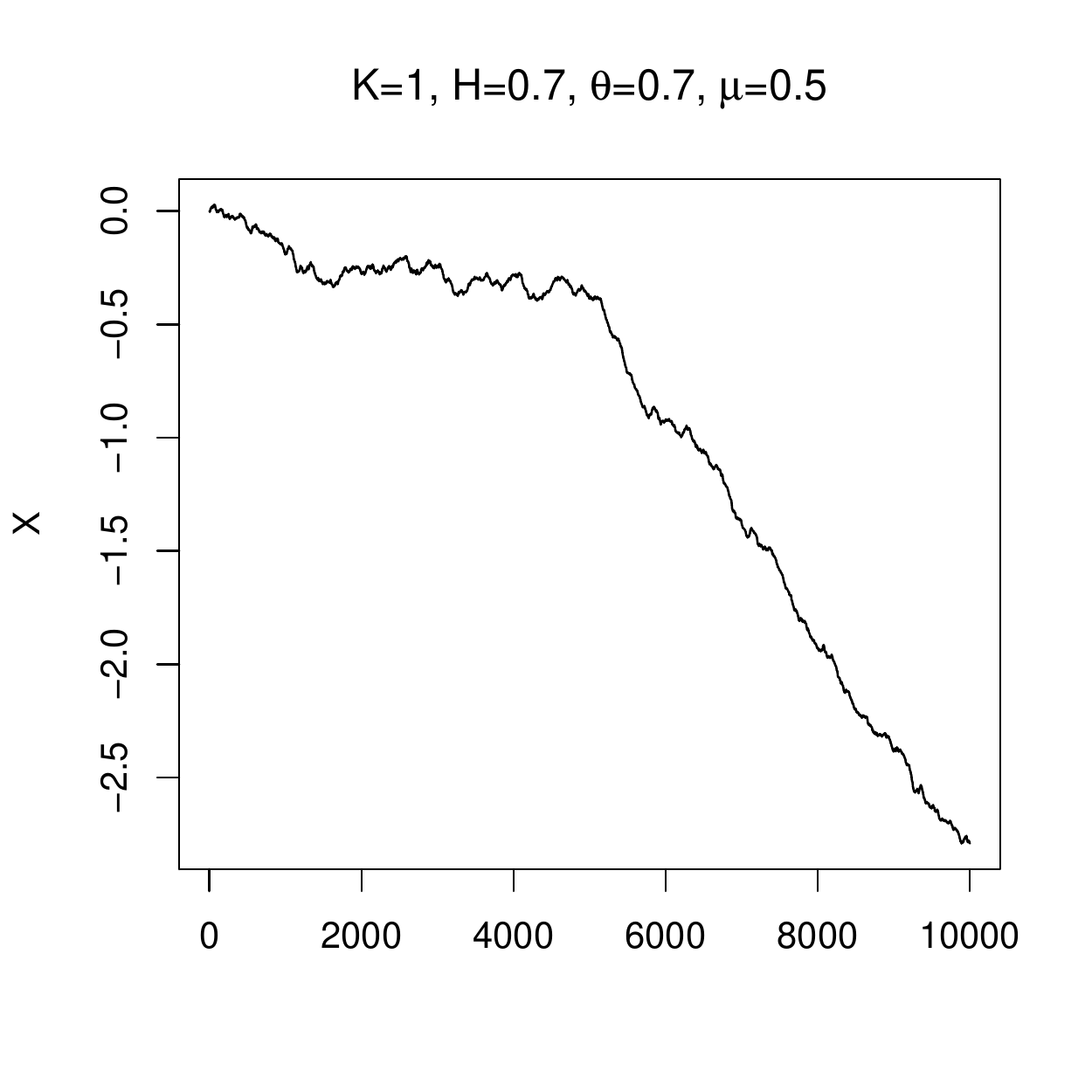}}
        {\includegraphics[ width=3.5cm]{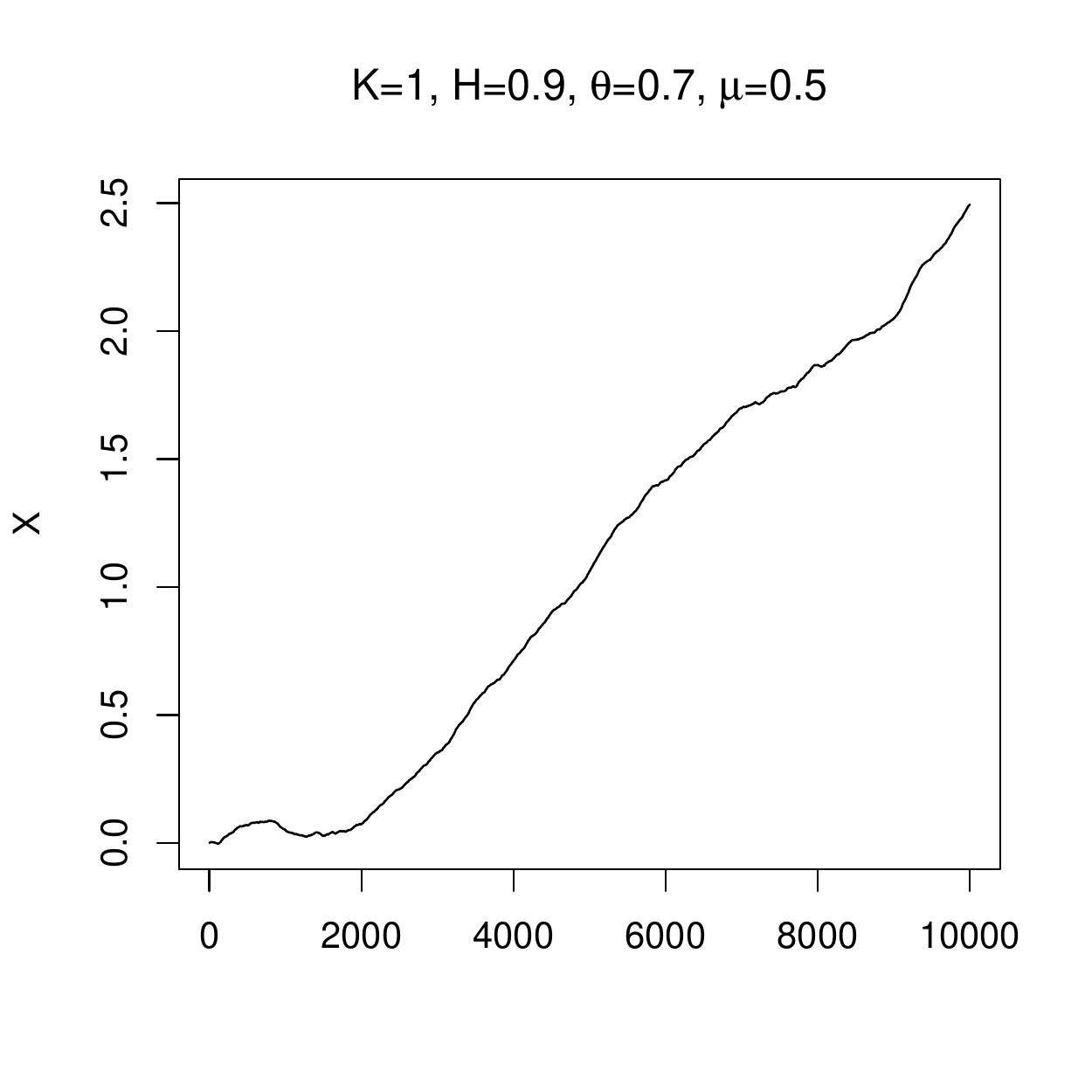}}
        {\includegraphics[ width=3.5cm]{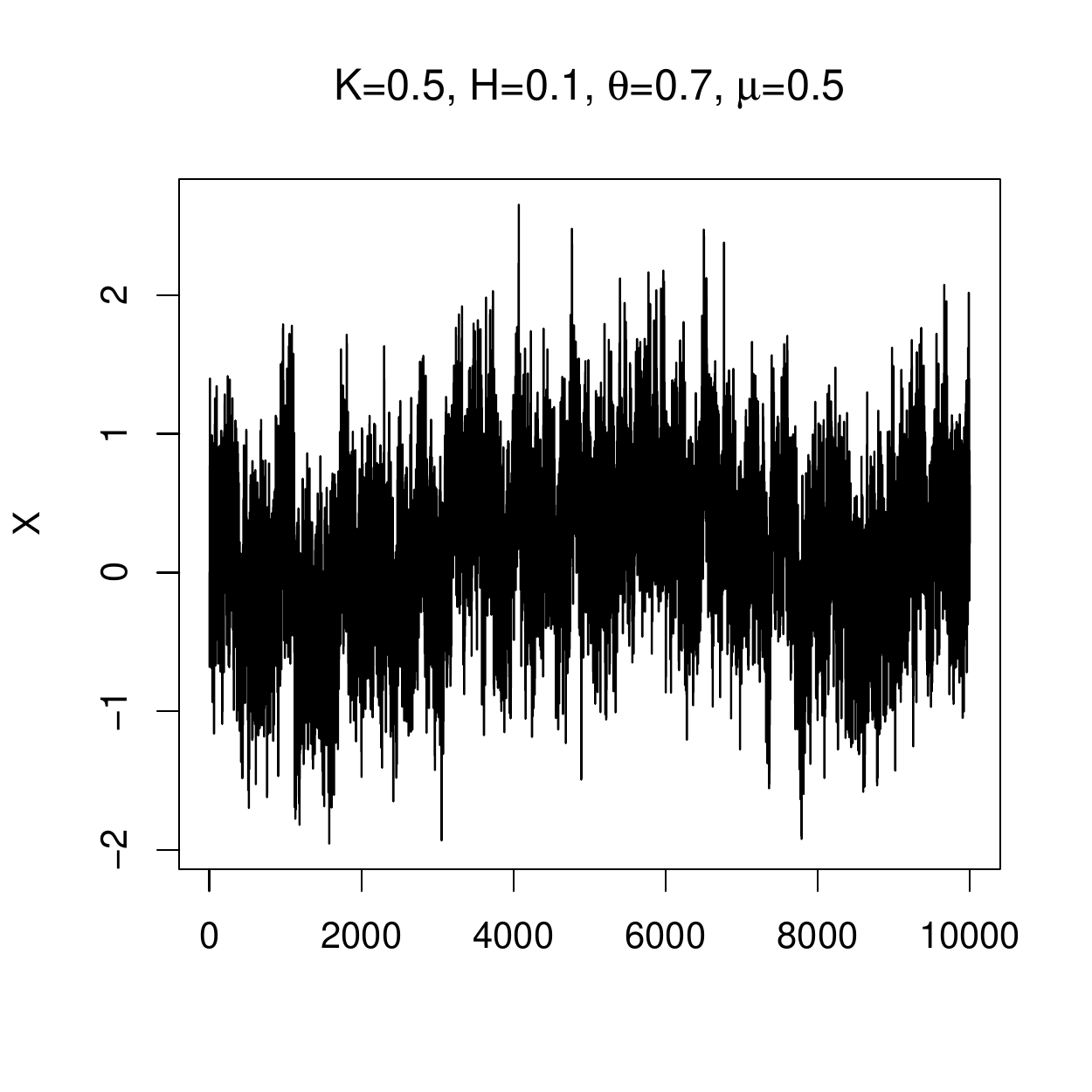}}
        {\includegraphics[ width=3.5cm]{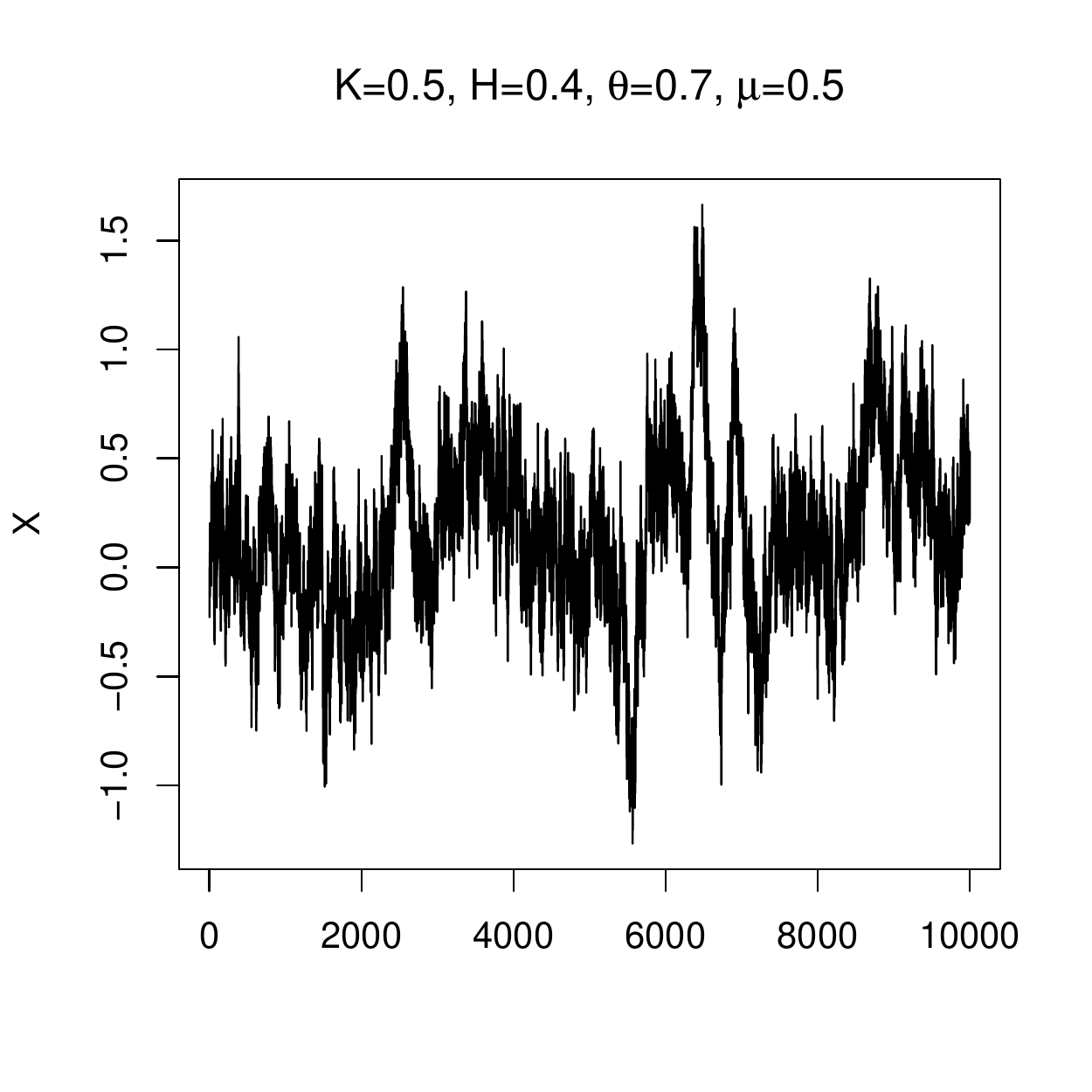}}
        {\includegraphics[ width=3.5cm]{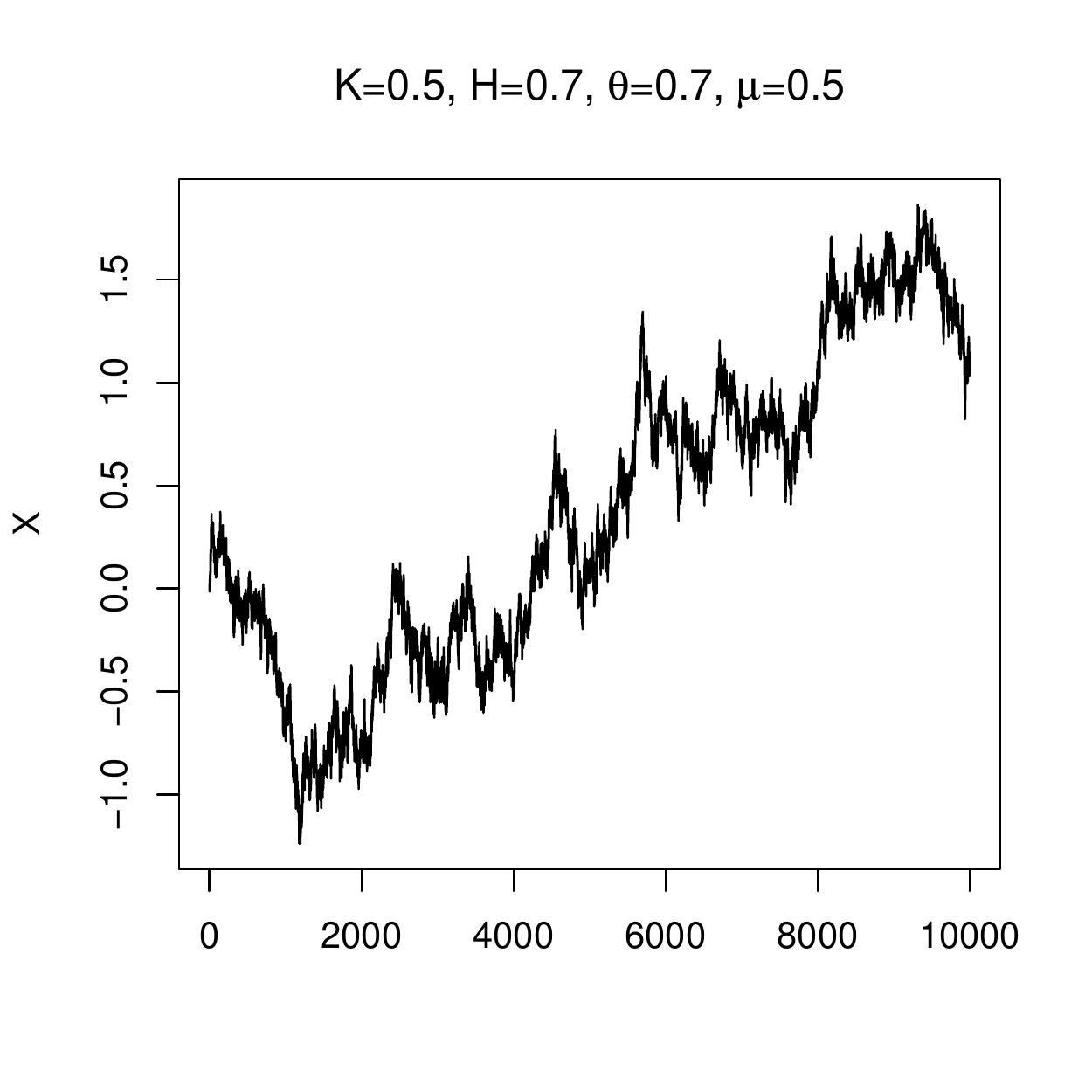}}
        {\includegraphics[ width=3.5cm]{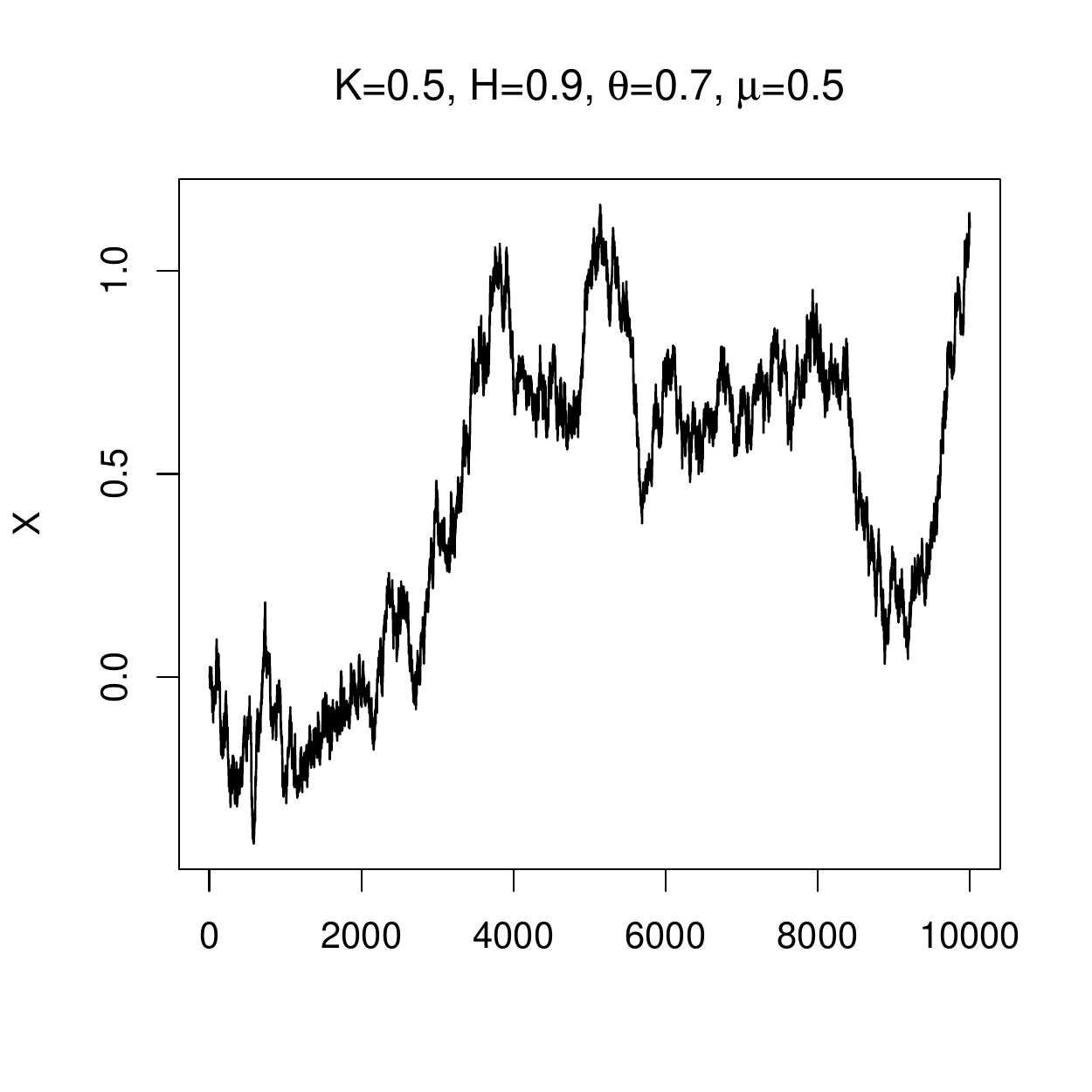}}
    \end{center}
\end{figure}

\noindent From the  properties of  $B^H$  listed above, it is clear
that the process $B^H$ satisfies the assumptions $(\mathcal{A}_1)$
and $(\mathcal{A}_2)$ for $G=B^{H,K}$ and
 $\gamma=\eta=HK$.\\
Furthermore, by \cite[Proposition 3.3.]{EEO}, the assumptions
$(\mathcal{A}_3)$ and $(\mathcal{A}_4)$  hold for
\begin{eqnarray}\sigma_{B^{H,K}}^2=\frac{HK\Gamma(2HK)}{\theta^{2HK}},\quad
\lambda_{B^{H,K}}= 1.\label{sigma-bifBm}\end{eqnarray} It remains to
check the assumption $(\mathcal{A}_5)$ for $G=B^{H,K}$ and
$\eta={HK}$. Fix
 $s\geq0$, applying Taylor's expansion to the functions $(1+x^{2H})^K$
 and $(1-x)^{2HK}$, we obtain as $T\rightarrow\infty$,
\begin{eqnarray*}\frac{E\left(B^{H,K}_sB^{H,K}_T\right)}{T^{{HK}}}
&=&\frac{1}{2^KT^{HK}}\left[(s^{2H}+T^{2H})^K-(T-s)^{2HK}\right]\\
&=&\frac{T^{HK}}{2^K}\left[(1+(\frac{s}{T})^{2H})^K-(1-\frac{s}{T})^{2HK}\right]\\
&=&\frac{T^{HK}}{2^K}\left[K(s/T)^{2H}+o(1/T^{2H})+2HKs/T+o(1/T)\right]\\
&=&\frac{1}{2^K}\left[K(s^{2H}/T^{H(2-K)})+o(1/T^{H(2-K)})+2HKs/T^{1-HK}+o(1/T^{1-HK})\right]\\&&
\longrightarrow0.
\end{eqnarray*}
Furthermore, by   (\ref{IBP}) and the usual integration by parts
formula, we obtain
\begin{eqnarray*}&&E\left[\frac{B^{H,K}_T}{T^{{HK}}}
e^{-\theta T}\int_0^Te^{\theta
r}dB^{H,K}_r\right]\\&&=E\left[\frac{B^{H,K}_T}{T^{{HK}}}
\left(B^{H,K}_T-\theta e^{-\theta T} \int_0^Te^{\theta
t}B^{H,K}_tdt\right)\right]\\&&=\frac{1}{T^{{HK}}}\left[
T^{2{HK}}-\frac{\theta e^{-\theta T}}{2^K}  \int_0^Te^{\theta
t}\left((T^{2{H}}+t^{2{H}})^K-(T-t)^{2{HK}}\right)dt\right]\\&&=
\frac{1}{T^{HK}}\left[ T^{2{HK}}e^{-\theta T}{2^K}-\frac{2HK
e^{-\theta T}}{2^K} \int_0^Te^{\theta
t}t^{2H-1}\left(T^{2H}+t^{2H}\right)^{K-1}dt\right]\\&&\quad+\frac{\theta
e^{-\theta T}}{2^KT^{HK}}  \int_0^Te^{\theta t}(T-t)^{2{HK}}dt,
\end{eqnarray*} which converges to zero as $T\rightarrow\infty$, where we used
that $\frac{ e^{-\theta T}}{T^{HK}}  \int_0^Te^{\theta
t}(T-t)^{2{HK}}dt$, by the same arguments as in the case of fBm,
 and using  L'H\^{o}pital's rule,
\begin{eqnarray*} \lim_{T\rightarrow\infty}
\frac{e^{-\theta T}}{T^{HK}} \int_0^Te^{\theta
t}t^{2H-1}\left(T^{2H}+t^{2H}\right)^{K-1}dt&\leq&
\lim_{T\rightarrow\infty} \frac{e^{-\theta T}}{T^{HK}}
\int_0^Te^{\theta t}t^{2HK-1}dt\\
&=&\lim_{T\rightarrow\infty}\frac{T^{2HK-1}}{\theta
T^{HK}+HKT^{HK-1}}
\\&=&\lim_{T\rightarrow\infty}\frac{T^{HK-1}}{\theta +\frac{HK}{T}}\\&=&0.
\end{eqnarray*}
Thus the assumption $(\mathcal{A}_5)$ holds.  As consequence, we
obtain the following result.
\begin{proposition}
Assume that $(H,K)\in(0,1)\times(0,1]$ and the process $G$, given in
(\ref{GV}), is a bifBm $B^{H,K}$. Then, almost surely, as
$T\rightarrow\infty$,
\begin{eqnarray*} \left(\widetilde{\theta}_{T},\widetilde{\mu}_{T}\right)\longrightarrow \left(\theta,\mu\right),
\quad
\left(\widetilde{\theta}_{T},\widetilde{\alpha}_{T}\right)\longrightarrow
\left(\theta,\alpha\right).
 \end{eqnarray*}
 In addition, if
$N_1\sim\mathcal{N}(0,1)$, $N_2\sim\mathcal{N}(0,1)$ and $B^{H,K}$
are independent, then as  $T\rightarrow\infty$,
\begin{eqnarray*}\left(e^{\theta T}(\widetilde{\theta}_T-\theta),T^{1-HK}\left(\widetilde{\mu}_T-\mu
\right)\right)\overset{Law}{\longrightarrow}\left(\frac{2\theta\sigma_{B^{H,K}}N_2}{\mu
+\zeta_{B^{H,K},\infty}},\frac{1}{\theta}N_1\right),
\end{eqnarray*}
\begin{eqnarray*}\left(e^{\theta T}(\widetilde{\theta}_T-\theta),T^{1-HK}\left(\widetilde{\alpha}_T-\alpha
\right)\right)\overset{Law}{\longrightarrow}\left(\frac{2\theta\sigma_{B^{H,K}}N_2}{\mu
+\zeta_{B^{H,K},\infty}}, N_1\right),
\end{eqnarray*}
where $\sigma_{B^{H,K}}$ is defined in (\ref{sigma-bifBm}), and
$\zeta_{B^{H,K},\infty}=\theta \int_0^{\infty}e^{-\theta
s}B^{H,K}_sds\sim \mathcal{N}(0,E(\zeta_{B^{H,K},\infty}^2))$ is
independent of $N_1$ and $N_2$, with
$E(\zeta_{B^{H,K},\infty}^2)<\infty$, by Lemma
\ref{key-applications}.
\end{proposition}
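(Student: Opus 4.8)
Most of the work needed for this Proposition is carried out in the discussion preceding the statement; here I organise it as a plan. The strategy is to obtain the result as a direct application of the general theorems of Section~\ref{sect-asymp}: once $B^{H,K}$ is shown to satisfy $(\mathcal{A}_1)$--$(\mathcal{A}_5)$ with the right parameters, the almost sure convergences of $(\widetilde{\theta}_T,\widetilde{\mu}_T)$ and $(\widetilde{\theta}_T,\widetilde{\alpha}_T)$ follow from the strong consistency theorem and the two joint convergences in law follow from the asymptotic distribution theorem. The first point to pin down is that every scaling exponent is governed by the product $HK$: the stated covariance structure gives $\gamma=\eta=HK$ and $\lambda_{B^{H,K}}=1$, so the correct normalisations are $e^{\theta T}$ for $\widetilde{\theta}_T$ and $T^{1-HK}$ for $\widetilde{\mu}_T$ and $\widetilde{\alpha}_T$.

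Concretely, $(\mathcal{A}_1)$ follows from $B^{H,K}_0=0$ and $E|B^{H,K}_t-B^{H,K}_s|^2\le 2^{1-K}|t-s|^{2HK}$ (so $c=2^{1-K}$, $\gamma=HK$), and $(\mathcal{A}_2)$ from $E[(B^{H,K}_T)^2]/T^{2HK}=1$ (so $\eta=HK$, $\lambda_{B^{H,K}}=1$); for $(\mathcal{A}_3)$ and $(\mathcal{A}_4)$ I would quote \cite[Proposition 3.3]{EEO}, which gives $\sigma_{B^{H,K}}^2=HK\Gamma(2HK)/\theta^{2HK}$. The assumption demanding real work is $(\mathcal{A}_5)$. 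For its first limit, substitute the explicit covariance into $E(B^{H,K}_sB^{H,K}_T)/T^{HK}$, factor out $T^{HK}/2^K$, and Taylor-expand $(1+(s/T)^{2H})^K$ and $(1-s/T)^{2HK}$ as $T\to\infty$; the surviving terms are of orders $T^{HK-2H}$ and $T^{HK-1}$, both of which vanish since $HK<\min(2H,1)$. For its second limit, use the Young integration by parts formula (\ref{IBP}) to write $e^{-\theta T}\int_0^Te^{\theta r}dB^{H,K}_r=B^{H,K}_T-\theta e^{-\theta T}\int_0^Te^{\theta t}B^{H,K}_tdt$, expand $E[B^{H,K}_T(\,\cdot\,)]$ via the covariance, and estimate the resulting deterministic integrals, chiefly $\frac{e^{-\theta T}}{T^{HK}}\int_0^Te^{\theta t}t^{2H-1}(T^{2H}+t^{2H})^{K-1}dt$ (dominated by $\frac{e^{-\theta T}}{T^{HK}}\int_0^Te^{\theta t}t^{2HK-1}dt\to0$ via L'H\^{o}pital), $\frac{e^{-\theta T}}{T^{HK}}\int_0^Te^{\theta t}(T-t)^{2HK}dt\to0$ (change of variable, as for the fBm), and $\frac{e^{-\theta T}}{T^{HK}}\int_0^Te^{\theta t}(T+t)^{2H-1}dt\to0$ (a similar bound). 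This verification of $(\mathcal{A}_5)$ is the step I expect to be the main obstacle; everything else is bookkeeping.

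With $(\mathcal{A}_1)$--$(\mathcal{A}_5)$ in hand, the almost sure statements follow from the strong consistency theorem and the two convergences in law follow from the asymptotic distribution theorem, applied with $\sigma_G=\sigma_{B^{H,K}}$, $\lambda_G=\lambda_{B^{H,K}}=1$, $\eta=HK$. It then remains to record the claimed properties of $\zeta_{B^{H,K},\infty}=\theta\int_0^\infty e^{-\theta s}B^{H,K}_sds$: by Lemma~\ref{lemma-zeta} it is the almost sure and $L^2(\Omega)$ limit of linear functionals of $B^{H,K}$, hence a centered Gaussian random variable; it is independent of $N_1$ and $N_2$, which are independent of $B^{H,K}$ by hypothesis; and its variance is finite, since the sub-additivity $(a+b)^K\le a^K+b^K$ valid for $K\in(0,1]$ gives $E(B^{H,K}_sB^{H,K}_t)\le \frac{1}{2^K}(t^{2HK}+s^{2HK})$, whence $E(\zeta_{B^{H,K},\infty}^2)\le \frac{2\theta^2}{2^K}k_{HK}<\infty$ by Lemma~\ref{key-applications} applied with parameter $HK$. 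This gives the two stated joint limit laws and completes the proof.
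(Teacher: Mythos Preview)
Your plan is correct and mirrors the paper's own argument almost step for step: verify $(\mathcal{A}_1)$--$(\mathcal{A}_2)$ from the stated properties of $B^{H,K}$ with $\gamma=\eta=HK$, quote \cite[Proposition~3.3]{EEO} for $(\mathcal{A}_3)$--$(\mathcal{A}_4)$, check $(\mathcal{A}_5)$ by Taylor-expanding the covariance and by rewriting the second expectation via (\ref{IBP}) before estimating the resulting deterministic integrals, and then invoke the two theorems of Section~\ref{sect-asymp}. One small slip: the integral $\frac{e^{-\theta T}}{T^{HK}}\int_0^Te^{\theta t}(T+t)^{2H-1}\,dt$ you list does not actually arise here (there is no $(t+s)$ term in the bifractional covariance; you may be thinking of the subfractional case), so only the first two estimates are needed.
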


\section{Appendix: Young integral}
In this section, we briefly recall some basic elements of Young
integral (see \cite{Young}),  which are helpful for some of the
arguments we use. For any $\alpha\in [0,1]$, we denote by
$\mathscr{H}^\alpha([0,T])$ the set of $\alpha$-H\"older continuous
functions, that is, the set of functions $f:[0,T]\to\R$ such that
\[
|f|_\alpha := \sup_{0\leq s<t\leq
T}\frac{|f(t)-f(s)|}{(t-s)^{\alpha}}<\infty.
\]
We also set $|f|_\infty=\sup_{t\in[0,T]}|f(t)|$, and we equip
$\mathscr{H}^\alpha([0,T])$ with the norm \[ \|f\|_\alpha :=
|f|_\alpha + |f|_\infty.\] Let $f\in\mathscr{H}^\alpha([0,T])$, and
consider the operator $T_f:\mathcal{C}^1([0,T])
\to\mathcal{C}^0([0,T])$ defined as
\[
T_f(g)(t)=\int_0^t f(u)g'(u)du, \quad t\in[0,T].
\]
It can be shown (see, e.g., \cite[Section 3.1]{nourdin}) that, for
any $\beta\in(1-\alpha,1)$, there exists a constant
$C_{\alpha,\beta,T}>0$ depending only on $\alpha$, $\beta$ and $T$
such that, for any $g\in\mathcal{C}^1([0,T])$,
\[
\left\|\int_0^\cdot f(u)g'(u)du\right\|_\beta \leq
C_{\alpha,\beta,T} \|f\|_\alpha \|g\|_\beta.
\]
We deduce that, for any $\alpha\in (0,1)$, any
$f\in\mathscr{H}^\alpha([0,T])$ and any $\beta\in(1-\alpha,1)$, the
linear operator
$T_f:\mathcal{C}^1([0,T])\subset\mathscr{H}^\beta([0,T])\to
\mathscr{H}^\beta([0,T])$, defined as $T_f(g)=\int_0^\cdot
f(u)g'(u)du$, is continuous with respect to the norm
$\|\cdot\|_\beta$. By density, it extends (in an unique way) to an
operator defined on $\mathscr{H}^\beta$. As consequence, if
$f\in\mathscr{H}^\alpha([0,T])$, if $g\in\mathscr{H}^\beta([0,T])$
and if $\alpha+\beta>1$, then the (so-called) Young integral
$\int_0^\cdot f(u)dg(u)$ is (well) defined as being $T_f(g)$.

The Young integral obeys the following formula. Let
$f\in\mathscr{H}^\alpha([0,T])$ with $\alpha\in(0,1)$, and
$g\in\mathscr{H}^\beta([0,T])$ with $\beta\in(0,1)$. If
$\alpha+\beta>1$, then $\int_0^. g_udf_u$ and $\int_0^. f_u dg_u$
are well-defined as Young integrals, and for all $t\in[0,T]$,
\begin{eqnarray}\label{IBP}
f_tg_t=f_0g_0+\int_0^t g_udf_u+\int_0^t f_u dg_u.
\end{eqnarray}

\end{document}